\begin{document}
\author{Guillermo Pe\~nafort Sanchis}

\title {Reflection Maps}








\begin{abstract} 
Given a reflection group $G$ acting on a complex vector space $V$, a reflection map is the composition of an embedding $X \hookrightarrow V$ with the orbit map $V\to\C^p$ that maps a $G$-orbit to a point. Reflection maps can be very singular, but we give tools to study them easily. We find obstructions to $\cA$-stability of reflection maps and produce, in the unobstructed cases, infinite families of $\cA$-finite map-germs of any corank. We also relate them to conjectures of L\^e, Mond and Ruas.
\end{abstract}

\maketitle


								\section{Introduction}

%
%
%

Our ability to understand a singular map $f\colon X\to Y$ between manifolds, locally around some point $x\in X$, depends drastically on how far the differential $df_x$ is from having its maximum possible rank. This loss of rank is called the \emph{corank} of $f$ at $x$ (Definition \ref{defCorank}) and finite maps having corank at most one everywhere are sometimes called \emph{curvilinear}. Many subjects related to singular maps, from the multiple point formulas \cite{KleimanMultiplePointFormulasI} and  Thom polynomials \cite{Berczi:2012,Ohmoto:2014}, to the vanishing homology \cite{GoryunovMond,HoustonTop} and  Whitney equisingularity \cite{Gaffney1993Polar-multiplic} of families of map-germs, are far better understood under the corank one hypothesis. The following simple yet surprisingly difficult open question, posted by L\^e (Conjecture \ref{conjLe}), exemplifies well how little we know about non-curvilinear maps:
 \emph{Is there any injective germ $(\C^2,0)\to (\C^3,0)$ of corank two?}
 
The objects employed to study map-germs, such as the multiple point spaces or  the instability locus,  usually have   an intricate algebraic structure, making it hard for us to predict the properties of a map by taking a glance at its coordinate functions. As one may imagine, the number of variables that play an essential role in computations is proportional to the corank of the map and the degree of the equations involved tends to increase with that of the coordinate functions. These technical obstacles have restricted most of the attention to the curvilinear case, to the point that there are few examples of $\cA$-finite map-germs with greater corank (some can be found in \cite{Altintas:2014,AltintasThesis,MararNunoANoteOnFiniteDeterminacyForCorank2, Mond:2016}). They have also left us with no known $\cA$-finite family containing examples with as high multiplicity as desired (here the multiplicity is the minimum of the orders of the coordinate functions), with the only exception, to the best of my knowledge, of the germs of curves. However,  the interest on non-curvilinear singularities has increased substantially over the last years (see, for instance, \cite{Altintas:2014, AltintasThesis, Feher2006On-the-second-o, Feher2012Thom-series-of-, Fernandez-de-Bobadilla2006A-reformulation, Marangell2010The-General-Qua, Marar2012Double-point-cu, Mond:2016, Ohmoto2014Bifurcation-of-, Rimany2002Multiple-Point-}) and it has been shown that, to proof Mond's conjecture (this is one of the main open problems in vanishing homology, see Section \ref{secFinalRemarks}), it suffices to check that it holds on a family with unbounded multiplicity. The aim of this work is to show that we may use reflection groups to produce maps of high corank and multiplicity, the \emph{reflection maps} of the title, which we understand better. 

Reflection maps are produced by taking an embedding $h\colon X\hookrightarrow V$ and `folding' it in a particular way,  prescribed by a reflection group $G$.  The relation between $G$ and $h$ determines the corank of the resulting map and groups with a bigger order may produce maps with higher multiplicity. These maps have the advantage that the action of $G$ breaks algebraically hard conditions into simpler pieces involving just the embedding $h$ and its translates by elements of the group. 

Many  singular maps in existing classifications are reflection maps (see Example \ref{exSimpleC2C3} and Section \ref{secFinalRemarks}) and their study could benefit from this perspective. More importantly, reflection maps have proved fruitful as a source of interesting new examples: They are among the first known counterexamples, as shown by Silva and Ruas \cite{Ruas:2017}, to a conjecture by Ruas \cite{Ruas:1994}, which stated the equivalence between topological triviality and Whitney equisingularity of families of map-germs $\C^2\to\C^3$ (they are in Example \ref{exNTo2NMinus1}, but we won't get into the details of this). Reflection maps also contain $\cA$-finite germs of maps $\C^n\to\C^p$ with arbitrarily high multiplicity, for $p\geq 2n-1$, and the ones from $\C^2$ to $\C^3$ might contribute to solving cases of Mond's conjecture for corank one and two (see Section \ref{secFinalRemarks} for details). As for L\^e's question, we will solve a much more general one for reflection maps, motivating an extension of L\^e's conjecture (see Section \ref{secCorankAndInj}) about the relation between corank, injectivity and the dimensions of source and target of map-germs.

Before getting into technicalities, we shall introduce the most basic reflection maps and explain what it means to `fold' an embedding. The simplest family of reflection maps is the fold map family, introduced by Mond \cite{Mond1985On-the-clasific} in the classification of $\cA$-simple germs $(\C^2,0)\to (\C^3,0)$.

	\begin{definition}\label{defFoldMap}A \emph{fold map} is a germ $f\colon (\C^n,0)\to(\C^p,0)$ of the form
\[x\mapsto (x_1,\dots,x_{n-1},x_{n}{}^2,H(x)),\]
for some $H\colon (\C^n,0)\to (\C^{p-n},0)$.
	\end{definition}
	
A fold map can be regarded as the composition of the graph of $H$, $x\mapsto (x,H(x))$, with the map $\omega\colon (\C^p,0)\to(\C^p,0)$, given by
	\[y\mapsto (y_1,\dots,y_{n-1},y_n{}^2,y_{n+1},\dots,y_p).\]
If $G$ is the group generated by the reflection with respect the hyperplane $\{y_n=0\}$, then $\omega$ maps a $G$-orbit to a point.  Of course, the graph of $H$ is an embedding and composing with $\omega$ is what we mean by folding it by means of $G$. An example of fold map is the cusp $x\mapsto (x^2,x^3)$, obtained by folding the graph of the function $H(x)=x^3$, as in Figure \ref{figCuspAsFold}.

 \begin{figure}
\begin{center}
\includegraphics[scale=1.2]{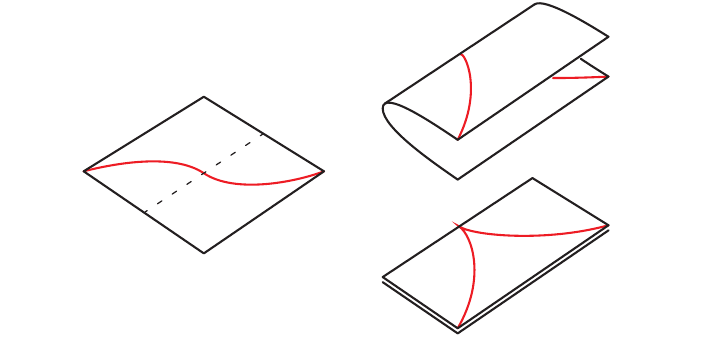}
\end{center}
\caption{The cusp as a fold map.}
\label{figCuspAsFold}
\end{figure}

Fold maps have been studied widely (see \cite{Wilkinson:1991} and  \cite{Houston:1998}) and it follows from work of Whitney \cite{Whitney44TheSingularitiesOfSmooth, Whitney:1943} that all stable singular maps germs $f\colon (\C^n,0)\to (\C^{2n-1},0)$ are fold maps. Mond showed that the $\cA$-classification of fold maps is equivalent to the $\cK^T$-classification of $H$, already carried out by Arnol'd \cite{Arnold:1978}, where the $\cK^T$-equivalence is an adapted contact equivalence preserving the hyperplane $\{y=0\}$. In other words, the classes of $\cA$-equivalence of fold maps are related to the contact between the graph of $H$ and the hyperplane $\{y=0\}$. The next family of reflection maps was introduced by Marar and Nu\~no Ballesteros \cite{MararNunoANoteOnFiniteDeterminacyForCorank2}, in the search of $\cA$-finite germs $(\C^2,0)\to(\C^3,0)$ of corank 2.

	\begin{definition}\label{defDoubleFoldMap}A \emph{double-fold map} is a germ $f\colon (\C^n,0)\to(\C^p,0)$ of the form
\[x\mapsto (x_1,\dots,x_{n-1}{}^2,x_{n}{}^2,H(x)),\]
for some $H\colon (\C^n,0)\to (\C^{p-n},0)$.
	\end{definition}

Double-fold maps are much like fold maps, but they are folded by means of the group generated by the reflections $r_1$ and $r_2$ with respect to the hyperplanes $\{y_{n-1}=0\}$ and $\{y_n=0\}$. In this case, folding means composing with the \emph{Folded Hankerchief} $\omega\colon (\C^p,0)\to(\C^p,0)$, depicted in Figure \ref{figC2xC2Orbitmap} and given by
	\[y\mapsto (y_1,\dots,y_{n-1}{}^2,y_n{}^2,y_{n+1},\dots,y_p).\]
 \begin{figure}
\begin{center}
\includegraphics[scale=1.2]{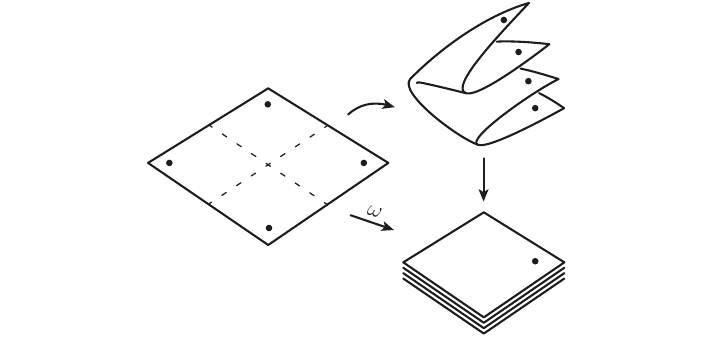}
\end{center}
\caption{The Folded Hankerchief.}
\label{figC2xC2Orbitmap}
\end{figure}
In \cite{Penafort-Sanchis2014THE-GEOMETRY-OF}, I studied double-fold maps $(\C^2,0)\to (\C^3,0)$ in terms of the contact of the graph of $H$ and strata determined by the hyperplanes $\{x=0\}$ and $\{y=0\}$. Many of the ideas in the present paper come from the ones developed there.

A natural way to generalize fold and double-fold maps is the following: Pick any embedding $h\colon X\hookrightarrow V$ and any group $G$ acting on $V$, then fold $h(X)$ so that points lying on the same $G$-orbit are glued. However, to come up with a map between complex manifolds, we need to find a holomorphic map $\omega\colon V\to Y$ which produces this folding. As Hilbert showed \cite{Hilbert:1890}, the invariants of a finite group acting linearly on a $\C$-vector space $V$ can be algebraically generated by a finite number $p\geq \dim V$ of polynomials. By a result of Noether (see Theorem \ref{thmAlphaGeomInv}), our desired folding is acomplished by composing with the map $\omega\colon V\to \C^{p}$ whose coordinate functions are the generating invariants of $G$. The reason why we ask for $G$ to be a reflection group is to take advantage of the rich theory regarding them. The part of the theory which is most relevant to us has to do with a stratification of $V$, induced by the reflecting hyperplanes of $G$, and how it encodes the singularities of $\omega$ and the action of $G$.
Another pleasant feature of reflection groups is that, by Shephard-Todd's Theorem \ref{thmShephardTodd}, these are the ones with a number $p=\dim V$ of generating invariants, so that the dimension of the target of $h$ is preserved after the folding $\omega$.

\subsection*{Outline of the work} Section \ref{secRefGroups} contains, briefly and without proofs, preliminary materials on reflection groups. 
 In Section \ref{secRefMaps} we define reflection maps and show some basic results.
 
  Section \ref{secCorankAndInj} deals with the relation between injectivity and  the corank $k$ locus of reflection maps. We prove that reflection maps satisfy the following extended version of L\^e's conjecture: There is no injective germ of reflection map $f\colon (\C^n,0)\to (\C^p,0)$, with $\corank f>p-n$.
  
   The main result of  Section \ref{secNormalCrossings} is Theorem \ref{thmCharacNormalCrossings}, which characterizes  normal crossings of reflection maps  in terms of two properties determined by the relation between the embedding $h$ and the group $G$. From this we obtain obstructions for reflection maps to have normal crossings and hence to be stable.
 
  The remaining sections are devoted to the study of stability and $\cA$-finiteness of reflection maps by means of their double-point spaces. In Sections \ref{secD2Omega} and \ref{secB2RefMaps} we introduce and describe the  double point spaces $D^2(\omega)$ and $B^2(\omega)$ of the orbit map $\omega$. Both spaces have a primary decomposition indexed by $G\setminus\{1\}$ (Theorems \ref{thmD2OrbitMap} and \ref{thmB2Omega}). We show how $B_2(\omega)$ determines the double point space $B_2(f)$ of a reflection map. It is worth noting that all these spaces can be computed without knowing the expression of the orbit map $\omega$. Computing these double points, without the extra structure provided by $G$, would be a much harder case by case task.
  
  Sections \ref{secObstrAFin} and \ref{secAFiniteRefMaps} contain the main results of this paper, which are the obstructions and criteria for stability and $\cA$-finiteness of reflection maps and the families of $\cA$-finite reflection maps of rank 0. In Section \ref{secObstrAFin} several previous results are combined to obtain the following: 
  \begin{enumerate}
\item There are no stable germs of reflection map of corank $\geq 2$ and all stable essential reflection maps of corank one are fold maps (Theorem \ref{thmNoRMStabCor2}).
\item For $p<2n-1$, there are no $\cA$-finite germs of reflection map $f\colon (\C^n,0)\to (\C^p,0)$ with $\corank f\geq 2$ and all $\cA$-finite essential reflection maps of corank one are fold maps (Theorem \ref{thmNoRMAFinCor2Dims}).
  \end{enumerate}
In Section \ref{secAFiniteRefMaps} we introduce criteria for $\cA$-finiteness in arbitrary corank in the non-obstructed dimensions $p\geq 2n-1$ in terms of the double-point space $B^2(f)$ (Propositions \ref{propCharAFinPgeq2N} and \ref{propAfiniteNTo2N-1}). These criteria, together with the explicit description of the branches $B_g(h)$ and some technical results contained in Section \ref{subsecAuxResults}, allow us to give infinite families of rank 0 $\cA$-finite map-germs for $p=2n$ and $p=2n-1$. For instance, for pairwise coprime integers $m_i$, we show that the following germs are $\cA$-finite:
\begin{align*}
x&\mapsto(x^{m_1},x^{m_2})\\
(x_1,x_2)&\mapsto(x_1{}^{m_1},x_2{}^{m_2},(x_1+x_2)^{m_3})\\
(x_1,x_2)&\mapsto(x_1{}^{m_1},x_2{}^{m_2},(x_1+x_2)^{m_3},(x_1-x_2)^{m_4})\\
(x_1,x_2,x_3)&\mapsto(x_1{}^{m_1},x_2{}^{m_2},x_3{}^{m_3},(x_1+x_2+x_3)^{m_4},(x_1-x_2+2x_3)^{m_5},(x_1+2x_2-x_3)^{m_6}).
\end{align*}
We make some final comments in Section \ref{secFinalRemarks}. We review the presence of reflection maps in well known classifications of map-germs. We motivate the search examples of $\cA$-finite map-germs of corank 2, by relating it to a new approach to Mond's conjecture. Finally, we discuss ideas for future projects related to reflection maps.

For simplicity, this work is written over $\C$, but most results should apply to $\R$ with little or no change.  Some others, such as the characterization of injectivity in Lemma \ref{lemCharInjRefEmb}, require replacing (complex) reflection groups by Coxeter groups.

\begin{notation*}
Throughout the text, $f\colon X\to Y$ stands for a finite holomorphic map between complex manifolds.
\end{notation*}
				
							\section*{Aknowledgements}
							
I thank Ton Marar and Juan Jos\'e Nu\~no Ballesteros, who introduced me to double-fold maps;  David Mond, who suggested to generalize the results using reflection groups; Javier Fern\'andez de Bobadilla, responsible for the general definition of reflection maps; Atoshi Chowdhury and Carolina Vallejo, for their help on Section \ref{subsecAuxResults} and Bruna Or\'efice Okamoto, for many useful conversations.
					
								\section{Preliminaries on reflection groups}\label{secRefGroups}
There is a vast theory regarding reflection groups but we will only explain here what is needed to get our results.  A general reference for the topic is \cite{LehrerTaylorUniRefGroups} and corresponding results over $\R$ can be found in \cite{Humphreys1990Reflection-Grou}.

Given  be a group $G$ acting on a set $X$, we write $\Fix g=\{x\in X\mid gx=x\}.$
We write the orbit of a subset $S$ of $X$ as $GS$ and drop unnecessary brackets, so that $Gx=G\{x\}$.
For every $g\in G$, we write $gS=\{gy\mid y\in S\}$. We fix a positive definite hermitian form $\langle-,-\rangle$ on $V$, given by 
$(\sum a_ie_i,\sum b_je_j)\mapsto \sum a_i\bar{b_i}$, where $e_i$ is the $i$th vector of a fixed basis of $V$. A linear map $g\colon V\to V$ is \emph{unitary} if $\langle y,y'\rangle=\langle gy,gy'\rangle$, for all $y, y'\in V$. 

	\begin{definition}
A \emph{reflection} on $V$ is a linear map $r\colon V\to V$, satisfying
\begin{enumerate}
\item $r$ is unitary, 
\item $r$ has finite order,
\item  $\dim \Fix  r=\dim V-1$. 
\end{enumerate}
The subspace  $H_r=\Fix  r\leq V$ is the \emph{reflecting hyperplane} of $r$.
	\end{definition}
	
	It is well known that every reflection $r$ acts trivially on $H_r$, and by multiplication by a finite-order root of the unity on the subspace $H_r^\bot$.

We write ${\rm U}(V)$ for the group of unitary automorphisms of $V$. When we say that $G$ is a subgroup of ${\rm U}(V)$, it is assumed that $G$ acts by restriction of the action of ${\rm U}(V)$. When two subgroups of ${\rm U}(V)$ are called isomorphic, the isomorphism is assumed compatible with the actions. This is relevant, as some non-isomorphic reflection groups are isomorphic as abstract groups.

	\begin{definition}
 A subgroup $G$ of $U(V)$ is a  \emph{reflection group} if it is finite and can be generated by reflections.
	\end{definition}

						
	\begin{definition}
The \emph{rank} of $G$ is the dimension of the vector subspace $W\leq V$ spanned by the orthogonal spaces of the reflecting hyperplanes of all reflections in $G$, or by the orthogonal spaces of the reflecting hyperplanes of any generating system of reflections. The group $G$ acts on $W$ and fixes $W^\bot$ pointwise.
	\end{definition}



Given two groups $G_1$ and $G_2$ acting on $V_1$ and $V_2$, the product $G=G_1\times G_2$ acts as a reflection group on $V_1\oplus V_2$. Reflections on each $V_i$ become reflections on $V_1\oplus V_2$, by extending their actions trivially. Obviously, $G_1\times G_2$ is generated by the (extensions of) two respective generating sets of reflections of $G_1$ and $G_2$.
Reflection groups that cannot be obtained as product of nontrivial reflection groups are called irreducible, and were classified by Shephard and Todd \cite{Shephard1954Finite-unitary-}. The list consists of the infinite family $G(m,p,n)$, and 34 `sporadic' reflection groups.
	
	\begin{ex}\label{exZm1mp}
The cyclic group $Z_m=\Z/m\Z$ acts on $\C$ (and on $\R$, if $m=2$) by multiplication by powers  of a primitive $m$th root of unity. We label the elements in $Z_m$ by numbers $0\leq a< m$, so that the action of the element $i_a$ is $x\mapsto e^{\frac{2\pi i}{m} a} x$.

We write $Z_{m_1,\dots,m_p}=Z_{m_1}\times\dots \times Z_{m_p}$. Elements in $Z_{m_1,\dots,m_p}$ are labelled $i_a=i_{a_1,\dots,a_p}$, with $0\leq a_i<m_i$ and $a=(a_1,\dots,a_p)$. Each $i_a$ is a reflection if and only if exactly one $a_i$ is not zero. 

	\end{ex}

						\subsection*{The complex of a reflection group}\label{secFacets}

The way a reflection group acts changes over $V$ according to the following natural stratification: The \emph{arrangement of hyperplanes} of $G$ is the union 
\[\sA=\bigcup_{i=1}^k H_i,\]
of the reflecting hyperplanes $H_1,\dots, H_k$ of all reflections in $G$. 	
The reflecting hyperplanes $H_1,\dots, H_k$ induce the following partition of $V$.
For each subset $B\subseteq \{1,\dots,k\}$, set
\[C_B=\{y\in V\mid y\in H_i\Leftrightarrow i\in B\}.\]
 Each nonempty set $C_B$ is called a \emph{facet} and  the set  $\sC$ of all facets is the \emph{complex of $G$}. We drop brackets and write the elements in $B$ in order, so that $C_{1,2}$ stands for $C_{\{2,1\}}$.

Every point $y\in V$ is contained in a facet $C\in \sC$, consisting of the points contained in exactly the same hyperplanes as $y$. Obviously, $\sC$ forms a partition of $V$. The way facets are defined, it is clear that the closure of a facet $C_B$ is the vector subspace $\langle C_B\rangle$ spanned by $C_B$, and equals the intersection of the hyperplanes $H_i,$ with $i\in B$. 

\begin{caution*}
The complex just defined is not the Coxeter complex of a \emph{real} reflection group. The Coxeter complex, explained in  \cite{Humphreys1990Reflection-Grou}, is a finer stratification where positivity questions are taken into account. 
\end{caution*}


The following result, which is just a restatement of Corollary 9.14 in \cite{LehrerTaylorUniRefGroups}, is key to many of the results in this paper:

	\begin{prop}\label{propElementFixesFacet}
For every facet $C\in \sC$, there exists an element $g\in G$, such that $\Fix  g=\langle C\rangle$.
	\end{prop}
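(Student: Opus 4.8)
The plan is to recognize the statement as the assertion that every element of the intersection lattice of $G$ is the fixed-point set of a single group element. First I would fix a facet $C=C_B$ and set $X=\langle C\rangle$; by the discussion preceding the statement, $X=\bigcap_{i\in B}H_i$ is an intersection of reflecting hyperplanes. Since any $g$ with $\Fix\,g=X$ necessarily fixes $X$ pointwise, the search for such a $g$ can be confined to the pointwise stabilizer $G_X=\{g\in G\mid gx=x\text{ for all }x\in X\}$, and the goal becomes: find $g\in G_X$ whose fixed space is exactly $X=\Fix\,G_X$.

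The next step is to understand $G_X$. By Steinberg's theorem, $G_X$ is again a reflection group, generated precisely by those reflections $r\in G$ whose hyperplane $H_r$ contains $X$. This lets me compute its fixed space: on one hand $X\subseteq\Fix\,G_X$; on the other, the reflections with hyperplanes $H_i$, $i\in B$, all lie in $G_X$, so $\Fix\,G_X\subseteq\bigcap_{i\in B}H_i=X$, giving $\Fix\,G_X=X$. Writing $U=X^\bot$, the group $G_X$ acts faithfully on $U$ as a reflection group with no nonzero global fixed vector, and I am reduced to producing $g\in G_X$ acting on $U$ without the eigenvalue $1$. Decomposing $U$ into $G_X$-irreducible summands and $G_X$ into the corresponding product of irreducible reflection groups, it suffices to solve this in each factor and multiply, so the problem reduces to the irreducible case.

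The crux --- and the only genuinely hard point --- is therefore: \emph{an irreducible complex reflection group acting on $U$ with trivial global fixed space contains an element with no nonzero fixed vector.} I would obtain this from the theory of regular elements (Springer): for a suitable regular number, the associated $\zeta$-regular element acts on $U$ with eigenvalues that are all nontrivial roots of unity, hence fixes only $0$. This is exactly the input that cannot be extracted from the elementary facts recalled so far --- indeed, for a general finite group a faithful representation with no global fixed vector need not contain a fixed-point-free element, so the conclusion relies essentially on the reflection-group structure. This is the reason the statement is quoted from \cite{LehrerTaylorUniRefGroups} rather than proved by hand.
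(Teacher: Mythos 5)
Your argument is essentially correct, but note first that the paper does not prove this proposition at all: it is explicitly quoted as a restatement of \cite[Corollary 9.14]{LehrerTaylorUniRefGroups}, so there is no internal proof to compare against, and what you have written is a genuine proof where the paper has only a citation. Your reduction is the natural one and is sound: pass to the pointwise stabilizer $G_X$ of $X=\langle C\rangle$, use Steinberg's theorem (Theorem \ref{thmStabilizerReflection}) to see that $G_X$ is a reflection group with $\Fix\,G_X=X$ (here you are implicitly using that the reflecting hyperplanes containing $X$ are exactly the $H_i$, $i\in B$, which holds because $C_B\subseteq X$ and points of $C_B$ lie on no other hyperplane), and then produce an element of $G_X$ acting on $U=X^\bot$ without eigenvalue $1$. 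You are also right that this last step is the genuine content and cannot be formal (the augmentation representation of a non-cyclic group is faithful with trivial fixed space, yet every element fixes a vector). Two remarks on that step. First, if you invoke regular elements you should say which regular number you take: $d=d_n$, the largest degree, works, since a $\zeta$-regular element has eigenvalues $\zeta^{1-d_1},\dots,\zeta^{1-d_m}$ and essentiality gives $1\le d_i-1\le d_n-1$, so none of these is $1$; but the regularity of $d_n$ for every irreducible group is itself a nontrivial theorem. Second, there is a shorter route that bypasses both the irreducible decomposition and Springer's theory: the Shephard--Todd--Solomon formula $\sum_{g\in G_X}t^{\dim\Fix(g|_U)}=\prod_i(t+d_i-1)$ shows that the number of elements of $G_X$ acting on $U$ without nonzero fixed vectors equals $\prod_i(d_i-1)$, which is positive precisely because every $d_i\ge 2$ when the global fixed space is trivial. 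Either way your conclusion stands.
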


	\begin{ex}\label{exRefHypZm1mp}
The reflecting hyperplanes of $Z_{m_1,\dots,m_p}$ are just $H_i=\{y_i=0\}$.
Every choice of hyperplanes $B\subseteq \{1,\dots,p\}$ yields a facet $C_B=\{y_i=0\Leftrightarrow i\in B\},$
 whose closure is $\langle C_B\rangle =\{y_i=0,\text{ for all }i\in B\}.$
 For a facet $C_B$, take any $i_a\in Z_{m_1,\dots,m_p}$, with $a_i\neq 0$ if and only if $i\in B$. It is immediate that $\Fix\ i_a=\langle C_B\rangle$. The complex  of $Z_{m_1,m_2}$ is depicted in Figure \ref{figC2xC2Complex}.
\begin{figure}
\begin{center}
\includegraphics[scale=1.2]{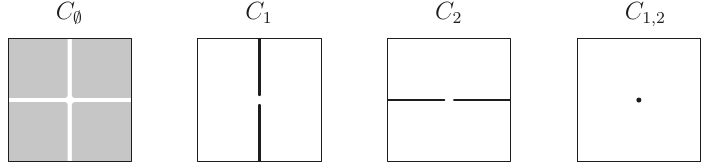}
\end{center}
\caption{Facets of $Z_{m_1,m_2}$.}
\label{figC2xC2Complex}
\end{figure}
	\end{ex}

	\begin{thm}[Steinberg,\cite{Steinberg:1968}]  \label{thmStabilizerReflection}
For any subset $A\subseteq V$, the pointwise stabilizer
\[G_A=\{g\in G\mid ga=a,\text{ for all }a \in A\}\]
is a reflection group, generated by the reflections in $G$ whose reflecting hyperplanes contain $A$.
	\end{thm}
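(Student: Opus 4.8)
The plan is to reduce the statement to the case of a single point and then invoke the point-stabilizer version, which is the genuine content of Steinberg's theorem; note that once $G_A$ is shown to be generated by reflections it is automatically a reflection group, being a subgroup of the finite group $G$.

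First I would reduce to the case that $A$ is a linear subspace. Since every $g\in G$ is linear, $g$ fixes $A$ pointwise if and only if it fixes the span $\langle A\rangle$ pointwise; hence $G_A=G_{\langle A\rangle}$, and a reflection $r$ satisfies $A\subseteq H_r$ exactly when $\langle A\rangle\subseteq H_r$. So I may assume $A=U$ is a subspace. Next I would pass to a single generic point. Writing $H_1,\dots,H_k$ for the reflecting hyperplanes of $G$ and $I=\{i\mid U\subseteq H_i\}$, for $i\notin I$ the intersection $H_i\cap U$ is a proper subspace of $U$. As $\C$ is infinite, the finite union $\bigcup_{i\notin I}(H_i\cap U)$ cannot exhaust $U$, so there is a point $u\in U$ with $u\in H_i\Leftrightarrow i\in I$. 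For this $u$, a reflection of $G$ fixes $u$ if and only if its hyperplane contains $u$, which by the choice of $u$ happens if and only if its hyperplane contains $U$. Thus the set $R$ of reflections fixing $u$ equals the set $R_U$ of reflections whose hyperplane contains $A$.

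Now comes the sandwich that completes the reduction. Setting $H_U=\langle R\rangle$, each $r\in R$ fixes $U$ pointwise, so $H_U\subseteq G_U\subseteq G_u$, where $G_u=\{g\in G\mid gu=u\}$. Therefore, if I can prove the single-point statement $G_u=\langle R\rangle=H_U$, the three groups coincide, and I obtain $G_A=G_U=H_U=\langle R_U\rangle$, which is exactly the assertion.

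It remains to establish the single-point case, $G_u=\langle\,\text{reflections of }G\text{ fixing }u\,\rangle$, which is Steinberg's theorem proper. For real reflection (Coxeter) groups this is geometric: $u$ lies in the closed chamber $\overline{C}$ of some Weyl chamber, which is a strict fundamental domain, and the stabilizer of a point of $\overline{C}$ is the standard parabolic subgroup generated by the reflections in the walls of $C$ through that point, the general case following by conjugation. For complex reflection groups no chambers are available, and this is the substantive step. Here I would argue by induction on $\dim\C^p-\dim\Fix\, g$: given $1\neq g\in G_u$, produce a reflection $s\in G$ with $su=u$ and $\dim\Fix(sg)>\dim\Fix\, g$, so that successive multiplication by such reflections drives $g$ to the identity and writes it as a product of reflections fixing $u$. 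Securing the existence of such a fixed-space-enlarging reflection that still fixes $u$ is the heart of the matter and the main obstacle; this is precisely what Steinberg \cite{Steinberg:1968} proves, and it is carried out for all unitary reflection groups in \cite{LehrerTaylorUniRefGroups}.
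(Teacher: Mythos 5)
The paper does not prove this statement at all: it is listed among the preliminaries on reflection groups, attributed to Steinberg \cite{Steinberg:1968}, with proofs referred to \cite{LehrerTaylorUniRefGroups}. So there is no in-paper argument to compare against, and your proposal ends up in essentially the same place -- the substantive content (the single-point stabilizer theorem) is cited rather than proved. What you add on top of the citation is a reduction from an arbitrary subset $A$ to a single point, and that reduction is correct: linearity of the action gives $G_A=G_{\langle A\rangle}$ and lets you replace $A$ by the subspace $U=\langle A\rangle$; since a finite union of proper subspaces of $U$ cannot cover $U$ over $\C$, there is a point $u\in U$ lying in exactly those reflecting hyperplanes that contain $U$, so the reflections fixing $u$ are exactly those whose hyperplane contains $A$; and the sandwich $\langle R_U\rangle\subseteq G_U\subseteq G_u$ collapses once $G_u=\langle R_U\rangle$ is known. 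Your closing remark that a reflection-generated subgroup of the finite unitary group $G$ is automatically a reflection group is also fine under the paper's definition. The only caveat is that your proof is not self-contained, since the fixed-space-enlarging induction for the point case is sketched but its key existence step is deferred to Steinberg -- which is exactly the paper's own stance, so this is not a defect relative to it.
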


	\begin{cor}\label{corStabilizer}
	\begin{enumerate}
\item \label{corStabilizerItem1} For any $y\in V\setminus \sA$, we have $G_y=1$.
 \item \label{corStabilizerItem2}	If $y\in V$ is a point in a facet $C\in \sC$, then $G_y=G_C$. 
		\end{enumerate}
\begin{proof}The first item is immediate. The second follows by construction of the facets, because the reflecting hyperplanes containing $y$ are exactly the ones containing $C$. 
\end{proof}
	\end{cor}

							\subsection*{The orbit map}\label{secOrbitMap}As we already mentioned in the Introduction, a group $G$ acting on $V$ determines a way to `fold' $V$, glueing a $G$-orbit to a point. Such a folding is prescribed by any system of generators of the algebra $\C[y_1,\dots,y_p]^G$ of $G$-invariant polynomials on $V$. The following result of Shephard and Todd \cite{Shephard1954Finite-unitary-}, with a more elegant proof due to Chevalley \cite{Chevalley:1955}, shows that reflection groups are characterized by having the simplest invariant algebras.
	\begin{thm}\label{thmShephardTodd}
Let $G$ be a finite subgroup of $\rm U(V)$ and $p=\dim V$. Then $G$ is a reflection group if and only if there exist $p$ homogeneous algebraically independent polynomials $\omega_1,\dots,\omega_p$, such that $\C[y_1,\dots,y_p]^G=\C[\omega_1,\dots,\omega_p]$.	
	\end{thm}
	
	Furthermore, the degrees $d_i=\deg(\omega_i)$ of the generating invariants $\omega_i$ of a reflection group $G$ are uniquely determined and satisfy $\prod d_i=\vert G\vert$ and that the number of reflections in $G$ is $\sum{d_i-1}$ \cite{Shephard1954Finite-unitary-}. There are many results other results on the invariants of reflection groups, we will just mention a criterion by Springer \cite{Springer:1974}, which gives a transparent way to ensure that you have a generating system of invariants: Let $G$ be a reflection group acting on $V$ and let $p=\dim V$. If $\omega_1,\dots,\omega_p$ are homogeneous algebraically independent invariant polynomials and $\prod \deg \omega_i=\vert G\vert$, then  $\omega_i$ generate the algebra of $G$-invariant polynomials on $V$.
	\begin{definition}
The \emph{orbit map} of a reflection group $G$ is the map $\omega\colon V\to\C^p$ whose coordinate functions are the generating invariants $\omega_1,\dots,\omega_p$.
	\end{definition}

The map $\omega$ is unique up to invertible polynomial transformations in the target.  Since we work with objects which are invariant under such transformations, the choice of $\omega$ does not matter. This justifies us abusively calling $\omega$ \emph{the} orbit map of $G$.  The name of the orbit map is motivated by the following theorem, due to Noether \cite{Noether:1916}. The result is true in a more general situation, namely when the coordinate functions of $\omega$ generate the invariants of a finite group acting lineraly on $V$.

	\begin{thm}\label{thmAlphaGeomInv}For any $y\in V$, we have
$\omega^{-1}(\omega(y))=Gy.$
	\end{thm}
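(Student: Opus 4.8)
The plan is to prove Theorem~\ref{thmAlphaGeomInv}, namely that $\omega^{-1}(\omega(y))=Gy$ for every $y\in\C^p$, by establishing the two inclusions separately. The easy inclusion $Gy\subseteq\omega^{-1}(\omega(y))$ is immediate from the definition of $\omega$: each coordinate $\omega_i$ is a $G$-invariant polynomial, so for any $g\in G$ we have $\omega_i(gy)=\omega_i(y)$ for all $i$, whence $\omega(gy)=\omega(y)$, i.e. $gy\in\omega^{-1}(\omega(y))$. The entire content of the theorem lies in the reverse inclusion $\omega^{-1}(\omega(y))\subseteq Gy$, which says that the invariant polynomials $\omega_1,\dots,\omega_p$ \emph{separate} the $G$-orbits.

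For the hard inclusion, suppose $z\in\C^p$ satisfies $\omega(z)=\omega(y)$, so that $f(z)=f(y)$ for every $f\in\C[\omega_1,\dots,\omega_p]=\C[y_1,\dots,y_p]^G$ by Theorem~\ref{thmShephardTodd}. I want to conclude $z\in Gy$. The standard route is a separation-of-orbits argument: I would argue by contradiction, assuming the orbit $Gz$ is disjoint from the orbit $Gy$. Since $G$ is finite, both orbits are finite sets, hence Zariski-closed and disjoint, so there exists a polynomial $q\in\C[y_1,\dots,y_p]$ vanishing on $Gy$ but equal to $1$ on $Gz$ (Lagrange-type interpolation on the finite point set $Gy\cup Gz$ suffices). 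The polynomial $q$ need not be invariant, so I would average it over the group, setting
\[
\tilde q(w)=\frac{1}{|G|}\sum_{g\in G}q(gw).
\]
Then $\tilde q\in\C[y_1,\dots,y_p]^G$ is $G$-invariant, it still vanishes at $y$ (since $q$ vanishes on all of $Gy$), and it takes the value $1$ at $z$ (since $q\equiv1$ on $Gz$). This produces an invariant polynomial distinguishing $y$ from $z$, contradicting $f(z)=f(y)$ for all invariants. Hence $Gz=Gy$, and in particular $z\in Gy$.

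The main obstacle, conceptually, is the separation step: guaranteeing that there is \emph{some} invariant distinguishing points in different orbits. The averaging trick reduces this to the purely geometric fact that two disjoint finite subsets of $\C^p$ can be separated by a polynomial, which is elementary. Two points deserve care. First, I should verify that $Gy\cap Gz=\emptyset$ is equivalent to $Gy\neq Gz$ for orbits, so the contradiction hypothesis is correctly set up; this is immediate since distinct orbits are disjoint. Second, I rely on Theorem~\ref{thmShephardTodd} precisely to translate the hypothesis $\omega(z)=\omega(y)$ into the statement that \emph{all} invariants agree at $y$ and $z$, not merely the chosen generators $\omega_i$: this is valid because every invariant is a polynomial in $\omega_1,\dots,\omega_p$. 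With these two points checked, the argument is complete. I note that this proof uses only finiteness of $G$ and the description of the invariant ring, so it covers the more general setting mentioned in the remark, where $\omega_1,\dots,\omega_p$ generate the invariants of any finite linear group.
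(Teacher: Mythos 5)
Your proof is correct. The paper states this result as a preliminary without proof, attributing it to Noether \cite{Noether:1916} and deferring to the references, so there is no in-text argument to compare against; your separation-of-orbits argument (disjoint finite orbits separated by an interpolating polynomial, then averaged over $G$ to produce a distinguishing invariant, combined with Theorem \ref{thmShephardTodd} to pass from the generators $\omega_i$ to all invariants) is precisely the standard proof those references contain. Your closing observation that the argument uses only finiteness of $G$ and the generation of the invariant ring also matches the paper's own remark that the theorem holds for any finite linear group whose invariants are generated by the coordinates of $\omega$.
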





	\begin{ex}\label{exOrbitMapZm1mp}
The orbit map $\omega\colon V\to \C^p$ of the group $Z_{m_1,\dots,m_p}$ is given by \[(y_1,\dots,y_p)\mapsto(y_1{}^{m_1},\dots,y_p{}^{m_p}).\]
In particular, the above mentioned Folded Hankerchief is the orbit map of the group $Z_{2,2}$. 
	\end{ex}

To control the corank of our reflection maps, we will need the following lemma, which follows directly from the proof in \cite[Theorem 9.13]{LehrerTaylorUniRefGroups} of a result originally due to Steinberg \cite{Steinberg:1960}.	 Since the objects involved are of linear nature, we may simplify the differential notation:
For any $y\in V$, we identify the tangent space $T_yV$ with $V$ itself, and regard  $d\omega_y$  as a linear map $V\to \C^p$. Since the tangent of a facet $C\in \sC$ is the same at every point, we write its orthogonal complement (at any point) as $C^\bot\leq V$.

	 \begin{lem}\label{KerDifOrbitMap}
For any $y\in V$ in a facet $C\in \sC$, we have that $\ker d\omega_y= C^\bot.$ In particular, the set of points $y\in V$ where $d\omega_y$ does not have rank  $p$ is the arrangement of hyperplanes $\sA$.
	 \end{lem}

	\section{Reflection maps}\label{secRefMaps}
Here we define (quasi-) reflection maps, essential reflection maps and reflected graphs. We give examples and show some basic properties.
	\begin{definition}
Given a reflection group $G$ acting on $V$, a \emph{$G$-reflection map} $f\colon X\to\C^p$ is a composite 
	\[f=\omega\circ h,\]
where $h\colon X\hookrightarrow V$ is an embedding and  $\omega\colon V\to \C^p$ is the orbit map of $G$. If we only ask $h\colon X\to V$ to be a finite map, then $f=\omega\circ h$ is a \emph{quasi-reflection map}.
	\end{definition}
	
	\begin{notation*}
In what follows, whenever we say that $f\colon X\to \C^p$ is a reflection map, then $G$, $h$ and $\omega$ stand  for the corresponding reflection group, embedding and orbit map. We also write
\[Y=\im h\subseteq V.\]	\end{notation*}
	
	\begin{definition}\label{defAEquiv}
Two maps $f\colon X\to Y$ and $f'\colon X'\to Y'$ are $\cA$-equivalent if there exist biholomorphisms  $\phi\colon X'\to X$ and $\psi\colon Y'\to Y$, such that $f'=\psi\circ f\circ\phi$. The definition extends to multi-germs by taking representatives.
	\end{definition}

Observe that since $\omega$ is unique up to invertible polynomial transformations in the target, the $\cA$-class of a $G$-reflection map does not depend on the choice of the orbit map $\omega$.

	\begin{ex}
The following are three different ways to obtain a parametrized a cusp 
 $t\mapsto (t^2,t^3)$
as a reflection map (Figure \ref{figCuspThreeWays}):
\begin{enumerate}
\item  $h(t)=(t,t^3)$ and $\omega(X,Y)=(X^2,Y)$, with reflection group $Z_2$,
\item  $h(t)=(t^2,t)$ and $\omega(X,Y)=(X,Y^3)$, with reflection group $Z_3$,
\item  $h(t)=(t,t)$ and $\omega(X,Y)=(X^2,Y^3)$, with reflection group $Z_{2,3}$.
\end{enumerate}
 \begin{figure}
\begin{center}
\includegraphics[scale=1.2]{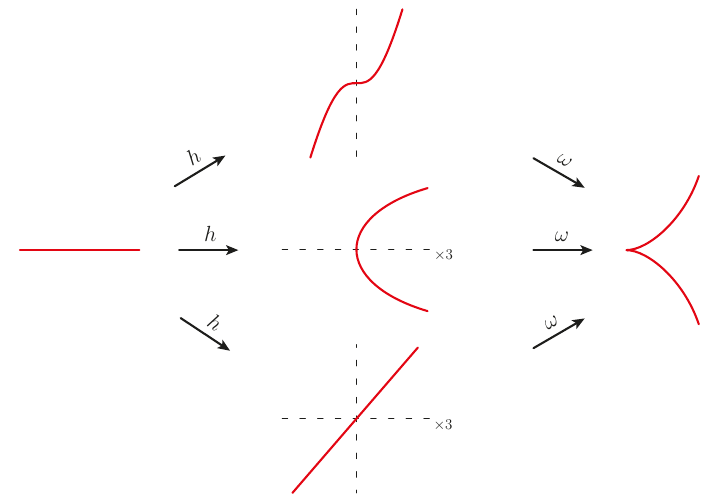}
\end{center}
\caption{Three ways to obtain a cusp as a reflection map.}
\label{figCuspThreeWays}
\end{figure}

	\end{ex}

	\begin{ex}
Clearly, fold maps (Definition \ref{defFoldMap}) are $Z_2$-reflection maps and double-fold maps (Definition \ref{defDoubleFoldMap}) are $Z_{2,2}$-reflection maps. As we saw in the previous example, the families of reflection maps are not mutually exclusive. For instance, the Cross-Cap can be parametrized as a fold map $(x,y)\mapsto(x,y^2,xy),$
or as a double-fold map $(x^2,y^2,x+y).$
A detailed study of double-fold maps can be found in \cite{Penafort-Sanchis2014THE-GEOMETRY-OF}. From this work follows that a generic  choice of a function $h\in \cO_2$ (in the sense that some transversality conditions are met) of order two produces an $\cA$-finite corank 2 double-fold map $(x,y)\mapsto(x^2,y^2,h(x,y))$.
	\end{ex}

	\begin{ex}\label{exSimpleC2C3}All $\cA$-simple germs $\C^2\to \C^3$, classified by Mond in \cite{Mond1985On-the-clasific}, are reflection maps. The families  and the germ
\begin{itemize}
\item[] $S_k\colon\ (x,y)\mapsto(x,y^2,y^3+x^{k+1}y),\, k\geq 0,$
\item[] $B_k\colon\ (x,y)\mapsto(x,y^2,x^{2}y+y^{2k+1}),\, k\geq 2,$
\item[] $C_k\colon\ (x,y)\mapsto(x,y^2,xy^3+x^{k}y),\, k\geq 3,$
\item[] $F_4\colon\ (x,y)\mapsto(x,y^2,x^{3}y+y^5)$
\end{itemize}
are fold maps. The remaining family  consists of the $Z_3$-reflection maps
\begin{itemize}
\item[] $H_k\colon\ (x,y)\mapsto(x,y^3,xy+y^{3k-1}),\, k\geq 1.$
\end{itemize}
 	\end{ex}
		
Further examples of reflection maps, and of maps which does not seem to be reflection maps, can be found in Sections \ref{secAFiniteRefMaps} and \ref{secFinalRemarks}. 
	
	\begin{definition}\label{defCorank}
The \emph{corank of $f\colon X\to Y$ at $x\in X$} is the dimension over $\C$ of the kernel of the differential $df_x$. If $\dim X=n$, then $\corank f_x=n-\rank df_x.$ The corank of a germ $f\colon (\C^n,0)\to (\C^p,0)$ is the corank at $0$ of a representative. We say that $f$ is \emph{singular at $x$} if $df_x$ is singular, that is, if $\corank f_x\geq 1$. Maps with corank $\leq 1$ everywhere are called \emph{curvilinear}.
	\end{definition}

	\begin{lem}\label{lemKerdf}
Let $f$ be a  quasi-reflection map. If $h(x)$ is contained in the facet $C\in\sC$, then $\ker (df_x)=dh_x^{-1}( C^\bot).$
In particular, if $f$ is a germ of reflection map at $x$ and $y=h(x)$, then
\[\corank f=\dim(T_yY\cap C).\]
\begin{proof}
Apply chain rule to $f=\omega \circ h$ and use Proposition \ref{KerDifOrbitMap}.
\end{proof}
	\end{lem}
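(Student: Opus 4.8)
The plan is to reduce everything to the chain rule together with Lemma~\ref{KerDifOrbitMap}, which already computes the kernel of the differential of the orbit map. Writing $y=h(x)$, the chain rule applied to $f=\omega\circ h$ expresses $df_x=d\omega_y\circ dh_x$ as the composite of the linear maps $dh_x\colon T_xX\to T_y\C^p$ and $d\omega_y\colon T_y\C^p\to T_{f(x)}\C^p$. So the whole statement amounts to computing the kernel of a composition of linear maps and then substituting the known kernel of $d\omega_y$.

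First I would record the kernel of a composition set-theoretically. A tangent vector $v\in T_xX$ satisfies $df_x(v)=0$ precisely when $d\omega_y(dh_x(v))=0$, that is, when $dh_x(v)\in\ker d\omega_y$. Hence $\ker df_x=dh_x^{-1}(\ker d\omega_y)$, where $dh_x^{-1}(-)$ denotes the preimage of a subspace under the linear map $dh_x$. It is worth stressing that in the generalized setting, where $h$ is only assumed finite, $dh_x$ need not be injective, so $dh_x^{-1}$ must be read as a set-theoretic preimage rather than as the image under an inverse map; this is exactly the notation used in the statement.

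Finally I would plug in the identification of $\ker d\omega_y$. Since $h(x)=y$ lies in the facet $C$, Lemma~\ref{KerDifOrbitMap} gives $\ker d\omega_y=C^\bot$, under the standing identification $T_y\C^p\cong\C^p$ that makes $C^\bot\le\C^p$ meaningful as a subspace of the tangent space at $y$. Substituting into the previous display yields $\ker df_x=dh_x^{-1}(C^\bot)$, as claimed. I do not expect any genuine obstacle here: the argument is a one-line application of the chain rule, and the only points requiring care are the reading of $dh_x^{-1}$ as a preimage of a subspace and the tangent-space identification under which $C^\bot$ is viewed inside $T_y\C^p$.
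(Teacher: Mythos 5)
Your proof is correct and is exactly the paper's argument: the paper's proof is the one-liner "apply the chain rule to $f=\omega\circ h$ and use the computation of $\ker d\omega$ at points of a facet," which you have simply written out in full, including the sensible caveat about reading $dh_x^{-1}$ as a set-theoretic preimage. No differences to report.
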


	\begin{ex}\label{exPerturbationCuspDifferential}
Let $h_s,s\in \C$, be the family of embeddings $t\mapsto (t,t^3-st)$ and $f_s$ the corresponding $Z_2$-reflection maps (Figure \ref{figFoldNormalB}), given by  
\[t\mapsto(t^2,t^3-st).\]
The complex of $Z_2$ is just $\{C_\emptyset,C_1\}$, with $C_\emptyset =\{(x,y)\in \C^2\mid n\neq 0\}$ and $C_1=\{(x,y)\in \C^2\mid x=0\}$. Since $C_\emptyset$ is an open subset of $\C^2$, we have $\C_\emptyset{}^\bot=0$ and, being $h_s$ an embedding, $f_s$ cannot be singular away from $C_1$. The embedding $h_0$ crosses the facet $C_1$ orthogonally and produces the singularity of $f_0$ at the origin. The perturbation $h_s$, with $s\neq 0$, is no longer orthogonal to $C_1$. Consequently, $f_s$ is immersive for $s\neq 0$.
	\begin{figure}
\begin{center}
\includegraphics[scale=1.2]{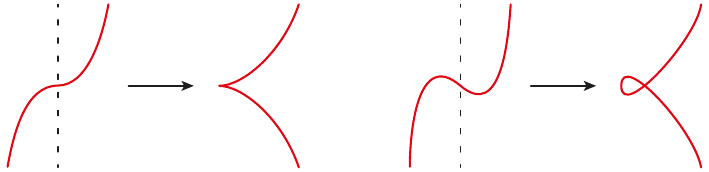}
\end{center}
\caption{The corank of $f$ depends on how $h$ crosses the facets.}
\label{figFoldNormalB}
	\end{figure}
	\end{ex}

	
	\begin{definition}
We say that a germ of reflection map $f\colon (\C^n,0)\to (\C^p,0)$ is \emph{essential} if $\corank f=\rank G$
or, equivalently,
$C^\bot\subseteq T_0Y,$
for the facet $C$ passing through the origin. 
	\end{definition}

	\begin{definition}
Let $H\colon V\to \C^r$ be any map, and let $G$ be a reflection group acting on $V$, with orbit map $\omega$. The \emph{$G$-reflected graph of $H$}  is the map $(\omega, H)\colon V\to \C^{p+r}$, given by
\[x\mapsto (\omega(x), H(x)).\]
The reflected graph $(\omega, H)$ is the reflection map obtained by taking the graph embedding $h$, given by $x\mapsto (x, H(x))$, and letting $G$ act on $V\times\C^{r}$, trivially on the second factor.
	\end{definition}

	\begin{prop}
Every essential reflection map is $\cA$-equivalent to a reflected graph.
\begin{proof}
Take the decomposition $V=C^\bot\oplus C$, where $C\in \sC$ is the facet passing through the origin. If $\rank G=k$, we can choose a basis of$V$ and an isomorphism $\psi\colon V\to \C^p$, making the previous decomposition just $\C^p=\C^k\oplus \C^{p-k}$. We have a new orbit map $\omega'=\psi\circ \omega$ of the form 
$(y_1,\dots,y_p)\mapsto (\omega'_1(y),\dots,\omega'_k(y),y_{k+1}, \dots,y_p).$
 If $f$ is essential, then $T_0Y\cap C^\bot=C^\bot$, and thus $\psi$ can be chosen such that the germ  $\phi\colon (\C^n,0)\to (\C^n,0)$, given by 
 $x\mapsto (h_1(x),\dots,h_n(x)),$
 is a germ of biholomorphism. We have the $\cA$-equivalence $f=\psi^{-1}\circ(\omega'', H)\circ \phi,$  where $H\colon \C^n\to\C^{p-n}$ is $x\mapsto(h_{n+1}\circ \phi^{-1}(x),\dots,h_{p}\circ \phi^{-1}(x)),$
and 
$\omega''\colon (\C^n,0)\to (\C^n,0)$ is  
$y\mapsto (\omega'_1(y),\dots,\omega'_k(y),y_{k+1},\dots,y_{n}).$
\end{proof}
	\end{prop}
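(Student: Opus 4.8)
The plan is to exploit the orthogonal, $G$-invariant splitting of $\C^p$ determined by the facet $C$ through the origin, and to spend the freedom available in the unitary group to simultaneously normalize the target action and straighten the embedding.

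First I would record the geometry of the splitting. Since every reflecting hyperplane is a linear subspace through the origin, the facet $C\in\sC$ passing through $0$ has $\langle C\rangle=\bigcap_i H_i=V^\perp$ and $C^\perp=V$, where $V$ is the rank-$k$ subspace spanned by the roots and $k=\rank G$. Because $G\subseteq U(p)$ acts on $V$ and fixes $V^\perp$ pointwise, the decomposition $\C^p=C^\perp\oplus\langle C\rangle$ is orthogonal and $G$-invariant. I would then check that the orbit map respects this splitting: by Proposition \ref{propDegreesOrbitMap} together with the corollary computing the orbit map of a product, $G$ regarded as a rank-$k$ group on $\C^p$ has degrees $d_1,\dots,d_k$ together with $p-k$ degrees equal to $1$, so an orbit map is obtained by taking the genuine orbit map of $G|_V$ on the $V$-coordinates and the identity on the (degree-one, automatically invariant) $\langle C\rangle$-coordinates.

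Next I would choose a unitary change of coordinates $\psi\colon(\C^p,0)\to(\C^p,0)$ carrying $C^\perp$ onto $\langle e_1,\dots,e_k\rangle$ and $\langle C\rangle$ onto $\langle e_{k+1},\dots,e_p\rangle$. After this change $\omega'=\psi\circ\omega$ takes the form $(y_1,\dots,y_p)\mapsto(\omega'_1(y),\dots,\omega'_k(y),y_{k+1},\dots,y_p)$ with $\omega'_1,\dots,\omega'_k$ depending only on $y_1,\dots,y_k$. The essential hypothesis enters precisely here: $C^\perp\subseteq\im dh_0$ gives $\im dh_0=V\oplus W$ with $W\subseteq V^\perp$ of dimension $n-k$. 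The point is that $\psi$ is only constrained to preserve the splitting, so it retains a free factor in $U(p-k)$ acting on $\langle C\rangle$, which I can use to rotate $W$ onto the coordinate subspace $\langle e_{k+1},\dots,e_n\rangle$. With this choice the projection of $\im d(\psi\circ h)_0$ onto the first $n$ coordinates is an isomorphism, so the germ $\phi\colon(\C^n,0)\to(\C^n,0)$ given by the first $n$ components of $\psi\circ h$ is a biholomorphism by the inverse function theorem.

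Finally I would assemble the equivalence. Writing the components of $\psi\circ h$ as $h_1,\dots,h_p$, I set $H=(h_{n+1}\circ\phi^{-1},\dots,h_p\circ\phi^{-1})\colon(\C^n,0)\to(\C^{p-n},0)$ and $\omega''=(\omega'_1,\dots,\omega'_k,y_{k+1},\dots,y_n)\colon(\C^n,0)\to(\C^n,0)$, the latter being the orbit map of $G$ acting on $\C^n$ trivially on the last $n-k$ coordinates. A direct computation then yields $f=\psi^{-1}\circ(\omega'',H)\circ\phi$, where $(\omega'',H)$ is exactly a $G$-reflected graph of $H$, and pre- and post-composing with the biholomorphisms $\phi$ and $\psi^{-1}$ realizes the $\cA$-equivalence. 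I expect the main obstacle to be the bookkeeping in the middle step: verifying that a single unitary $\psi$ can both block-diagonalize the action, aligning $V$ and $V^\perp$ with coordinate blocks, and straighten $h$ so that its first $n$ components become submersive. This double role is where the hypothesis $C^\perp\subseteq\im dh_0$ is indispensable, since without it $V$ need not lie in $\im dh_0$ and the projection argument collapses.
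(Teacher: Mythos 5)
Your proposal is correct and follows essentially the same route as the paper: split $\C^p=C^\perp\oplus\langle C\rangle$ along the facet through the origin, use a unitary change of target coordinates to put the orbit map in block form, invoke essentiality ($C^\perp\subseteq\im dh_0$) to make the first $n$ components of the straightened embedding a biholomorphism $\phi$, and then read off $f=\psi^{-1}\circ(\omega'',H)\circ\phi$. The extra details you supply — that $\langle C\rangle=V^\perp$ is $G$-invariant, that the residual $U(p-k)$ freedom on $\langle C\rangle$ rotates $W=\im dh_0\cap V^\perp$ onto the coordinate subspace — are exactly the points the paper leaves implicit in the phrase ``$\psi$ can be chosen such that $\phi$ is a germ of biholomorphism,'' so they are a welcome elaboration rather than a divergence.
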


%
%
%
%
%
				\section{Corank and injectivity}\label{secCorankAndInj}

We study injectivity of germs of reflection maps, and relate it to their corank. Our main motivation comes from the following Conjecture \ref{conjLe}, due to L\^e \cite{LeConj} (the original statement is different but equivalent to the one we give, as explained in \cite{Fernandez-de-Bobadilla2006A-reformulation}). We show an extended version of this conjecture for germs of reflection maps.

	\begin{conjecture}\label{conjLe}
There is no injective germ $f\colon (\C^2,0)\to (\C^3,0)$ of corank 2.
	\end{conjecture}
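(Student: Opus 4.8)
The plan is to attack Conjecture \ref{conjLe} through the image surface and its normalization, pushing the corank-two hypothesis as far as it will go. Let $f\colon(\C^2,0)\to(\C^3,0)$ be finite and injective with $\corank f=2$, so that $df_0=0$ and each component of $f$ lies in the square of the maximal ideal. Choosing a finite representative, $f$ is bijective onto its image $X=f(\C^2)$, a hypersurface germ in $\C^3$. Since $f$ is injective it is generically one-to-one, hence birational onto $X$; as $\C^2$ is normal, Zariski's Main Theorem forces $f$ to be \emph{the} normalization of $X$. In particular $f$ is a homeomorphism $(\C^2,0)\to(X,0)$, so the link $L_X$ is homeomorphic to $S^3$. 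The first structural point to record is that, because $f$ is not an immersion, it is not an isomorphism onto $X$, so $X$ is not normal; the non-normal locus of a surface hypersurface is pure of codimension one, so $X$ is singular along a curve $\Gamma\ni 0$. Thus corank two already rules out the naive route: $X$ never has an isolated singularity, and Mumford's theorem (simply connected link $\Rightarrow$ smooth) cannot be applied directly to $L_X\cong S^3$ to force smoothness and hence a contradiction.

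Next I would reformulate injectivity in terms of double points: $f$ is injective if and only if the geometric double-point locus lies in the diagonal, i.e. $D^2(f)\subseteq\Delta\C^2$ set-theoretically, so that no two distinct source points are glued. Here it is essential to note that the corank-\emph{one} analogue of the conjecture is false: the cuspidal edge $(x,y)\mapsto(x,y^2,y^3)$ is an injective corank-one germ whose image is singular along a curve, and whose double-point scheme is supported on the diagonal (arising from ramification, not from genuine identifications). Hence the conjecture is a strictly corank-two phenomenon, and the whole content is to show that the single extra condition $df_0=0$ promotes the scheme-theoretic double points into \emph{honest off-diagonal} ones. Equivalently, along $\Gamma$ the non-normality of $X$ must be of crossing type (nodal transverse slices) rather than of the cuspidal type allowed in corank one.

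The substantive step is to derive such a genuine double point from the topology of $L_X\cong S^3$ together with the degeneracy imposed by $df_0=0$. I would study the transverse type of $X$ along $\Gamma$: the knot $L_\Gamma=\Gamma\cap L_X$ sits in $S^3$, and a generic transverse slice of $X$ along $\Gamma\setminus\{0\}$ is a plane-curve germ whose normalization is one or two smooth branches. If this slice were everywhere cuspidal (one unibranch singularity), $X$ would look like an iterated cuspidal edge and injectivity could persist, as in corank one; the goal is to show that the vanishing of \emph{all} first derivatives at the origin forces the multiplicity and transverse geometry along $\Gamma$ to be incompatible with a purely cuspidal edge, so that the generic transverse slice must be a node. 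A node in the slice is a genuine pair of identified points, contradicting $D^2(f)\subseteq\Delta\C^2$. Concretely one would compare the local contribution of $\Gamma$ to Mond's image Milnor number and to the double-point number with the constraint that these cycles cannot all be accounted for by cross-caps and cusps when the tangent map vanishes identically.

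The main obstacle is precisely this last implication, and it is the reason the conjecture remains open: controlling, from $df_0=0$ alone, the transverse type of $X$ along $\Gamma$ and excluding everywhere-cuspidal non-normality is exactly the hard non-curvilinear double-point geometry that the introduction flags as poorly understood, and there is at present no mechanism forcing the \emph{geometric} (as opposed to scheme-theoretic) double-point locus to be nonempty for an arbitrary corank-two germ (compare the reformulation in \cite{Fernandez-de-Bobadilla2006A-reformulation} and the corank-one machinery of \cite{MararMondCorank1}, which does not transfer). The reflection-map setting of this paper is the case in which this obstacle dissolves, since the double-point space decomposes explicitly as $B^2(f)=\bigcup_{g\in G\setminus\{1\}}B_g(h)$ with each branch controlled by the embedding and a single group element, so injectivity can be read off directly. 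I would therefore expect a full proof of Conjecture \ref{conjLe} to require either a new lower bound on geometric double points valid in corank two, or a reduction of a general corank-two germ to a reflection-type model to which this explicit branch description applies.
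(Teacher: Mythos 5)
Your proposal is not a proof, and you say so yourself: everything up to the final step is a (correct) reformulation, and the step that would close the argument --- showing that $df_0=0$ forces the generic transverse slice of the image along its singular curve $\Gamma$ to be a node rather than a cusp, i.e.\ forces a genuine off-diagonal double point --- is exactly the open problem. Your preliminary reductions are sound: a finite injective germ is the normalization of its image $X$ by Zariski's Main Theorem, the link of $X$ is $S^3$, non-immersivity makes $X$ non-normal hence singular along a curve (so Mumford's theorem is unavailable), and the cuspidal edge $(x,y)\mapsto(x,y^2,y^3)$ shows the statement is strictly a corank-two phenomenon. But none of this produces the contradiction; it relocates it. So the proposal has a genuine, self-acknowledged gap, which is unavoidable here: the statement you were given is Conjecture \ref{conjLe}, which the paper does not prove either --- it is stated as an open question, and the paper only establishes it for reflection maps (Proposition \ref{propLeConjRefMaps}).

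One correction to your closing paragraph, where you describe how the reflection-map case "dissolves": the paper's proof of Proposition \ref{propLeConjRefMaps} does not use the decomposition $B^2(f)=\bigcup_{g\in G\setminus\{1\}}B_g(h)$ at all (that machinery serves the stability and $\cA$-finiteness results). The actual mechanism is elementary and purely about orbits and dimension counting: Lemma \ref{lemCharInjRefEmb} characterizes injectivity of $f=\omega\circ h$ by the condition $Y\cap gY\subseteq\Fix\,g$ for all $g\neq 1$, where $Y=\im h$; Lemma \ref{lemKerdf} identifies $\ker df_0$ with $dh_0^{-1}(C^\bot)$ for the facet $C$ through the origin, so $\corank f>p-n$ forces $\dim(\im dh_0\cap C^\bot)\geq p-n+1$ and hence $\dim(Y\cap C)\leq 2n-p-1$; and Proposition \ref{propElementFixesFacet} supplies $g\in G$ with $\Fix\,g=\langle C\rangle$, while $Y\cap gY$, being a nonempty intersection of two $n$-folds through $0$ in $\C^p$, has dimension at least $2n-p$ and so cannot lie in $\Fix\,g$ (Propositions \ref{propInjAndRankG} and \ref{propExtendedLeConj}). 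In other words, for reflection maps injectivity is decided linearly by the group action, with no double-point scheme required --- which is also why that argument gives no traction on the general conjecture you were attempting.
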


	\begin{lem}\label{lemCharInjRefEmb}
A quasi-reflection map $f$ is injective if and only if $h$ is injective and, for all $g\in G\setminus\{1\}$,
\[Y\cap gY\subseteq \Fix  g.\]
\begin{proof}
The map $h$ has to be injective for $f$ to be injective. If $h$ is injective, then $f$ is injective if and only if there exist $y,y'\in Y$, with $y'\neq y$ and $\omega (y')=\omega(y)$, that is, such that $y'=gy$, for some $g\in G$ with $y\notin \Fix  g$.
\end{proof}
	\end{lem}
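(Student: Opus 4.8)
The plan is to reduce injectivity of $f=\omega\circ h$ to a pair of conditions by feeding everything through Noether's theorem (Theorem \ref{thmAlphaGeomInv}), which identifies the fibre $\omega^{-1}(\omega(y))$ with the orbit $Gy$. Concretely, $f(x)=f(x')$ means $\omega(h(x))=\omega(h(x'))$, and by Theorem \ref{thmAlphaGeomInv} this holds exactly when $h(x')=g\,h(x)$ for some $g\in G$. Thus $f$ fails to be injective precisely when two distinct source points are either already identified by $h$ (the case $g=1$), or sent by $h$ to two genuinely distinct points lying in a common $G$-orbit (the case $g\neq 1$). The whole argument is the translation of this dichotomy into the stated conditions.

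For the forward implication I would first note that $h$ must be injective: if $h(x)=h(x')$, applying $\omega$ gives $f(x)=f(x')$, hence $x=x'$. To obtain the orbit condition, fix $g\neq 1$ and take $y\in Y\cap gY$. Since $y\in Y$ we may write $y=h(x)$, and since $y\in gY$ we have $g^{-1}y\in Y$, say $g^{-1}y=h(x')$. As $g^{-1}y$ lies in the orbit $Gy$, Theorem \ref{thmAlphaGeomInv} gives $\omega(y)=\omega(g^{-1}y)$, that is $f(x)=f(x')$; injectivity of $f$ then forces $x=x'$, so $y=h(x)=h(x')=g^{-1}y$, i.e. $y\in\Fix g$. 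This proves $Y\cap gY\subseteq\Fix g$.

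For the converse, assume $h$ is injective and $Y\cap gY\subseteq\Fix g$ for every $g\neq 1$, and suppose $f(x)=f(x')$. By Theorem \ref{thmAlphaGeomInv} there is $g\in G$ with $h(x')=g\,h(x)$. If $g=1$, injectivity of $h$ immediately gives $x=x'$. If $g\neq 1$, set $y:=h(x')$; then $y\in Y$ and $y=g\,h(x)$ with $h(x)\in Y$, so $y\in Y\cap gY\subseteq\Fix g$. Hence $gy=y$, and applying $g^{-1}$ to $h(x')=g\,h(x)$ yields $h(x)=g^{-1}y=y=h(x')$; injectivity of $h$ again gives $x=x'$. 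Therefore $f$ is injective.

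I do not expect a genuine obstacle here, as the statement is essentially a direct consequence of Theorem \ref{thmAlphaGeomInv}; note in particular that no finiteness or embedding hypothesis on $h$ is used beyond injectivity. The only point requiring a little care is the bookkeeping around $\Fix g$: a nontrivial $g$ may still fix a point $y$, in which case $gy=y$ identifies $y$ with itself and produces no gluing of distinct source points. This is exactly why the correct condition is the containment $Y\cap gY\subseteq\Fix g$ rather than the stronger $Y\cap gY=\emptyset$, and keeping track of it is what makes both implications close cleanly.
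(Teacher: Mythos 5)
Your proof is correct and follows essentially the same route as the paper: both reduce injectivity of $f=\omega\circ h$ to Noether's identification $\omega^{-1}(\omega(y))=Gy$ (Theorem \ref{thmAlphaGeomInv}) and then split into the cases $g=1$ and $g\neq 1$. Your write-up is simply a more detailed unpacking of the paper's one-line argument (which, incidentally, contains a small slip — it should read ``$f$ is \emph{not} injective if and only if there exist $y,y'$\dots'').
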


	\begin{ex}The same perturbation of the cusp as in Example \ref{exPerturbationCuspDifferential} shows how the intersection of $Y$ with its translates $gY$ characterizes injectivity. See Figure \ref{figFoldDoublePoint120width}.
	\begin{figure}
\begin{center}
\includegraphics[scale=1.2]{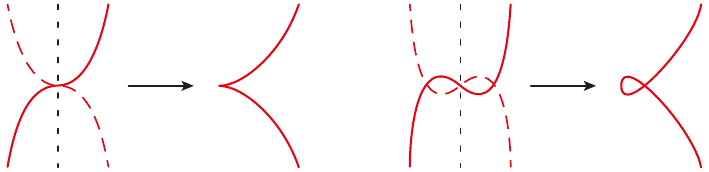}
\end{center}
\caption{Injectivity is characterized by intersections of translates of $Y$.}
\label{figFoldDoublePoint120width}
	\end{figure}
	\end{ex}

	\begin{prop}\label{propInjAndRankG}
There is no injective germ of  quasi-reflection map $f\colon (\C^n,0)\to (\C^p,0)$, with $\rank G>2(p-n)$.

\begin{proof} The hypothesis $\rank G>2(p-n)$ implies that the facet $C\in \sC$ passing through the origin has dimension strictly smaller than $2n-p$. By Proposition \ref{propElementFixesFacet}, there is an element $g\in G$, such that $\Fix  g= C$. Since $Y$ is an $n$-dimensional space passing through the origin, $Y\cap gY$ is a non-empty space of dimension at least $p-2(p-n)=2n-p$. Therefore, $Y\cap gY$ cannot be contained in $\Fix  g$, and the claim follows from Lemma \ref{lemCharInjRefEmb}.
\end{proof}
	\end{prop}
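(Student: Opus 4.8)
The plan is to derive non-injectivity purely from a dimension count, using the characterization of injectivity in Lemma \ref{lemCharInjRefEmb}. Writing $Y=\im h$, that lemma reduces the problem to exhibiting a single nontrivial $g\in G$ for which the intersection $Y\cap gY$ fails to lie inside $\Fix g$ (the possibility that $h$ itself is non-injective only helps us). So the whole argument amounts to producing such a $g$ and comparing dimensions.

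First I would locate the facet $C\in\sC$ through the origin and compute its dimension. Since $G$ acts linearly, every reflecting hyperplane passes through $0$, so the origin lies in all of them and $C$ is the deepest facet, namely $\langle C\rangle=\bigcap_i H_i=V^\bot$, where $V$ is the span of the roots. Hence $\dim\langle C\rangle=p-\rank G$, and the hypothesis $\rank G>2(p-n)$ translates into the clean inequality $\dim\langle C\rangle<2n-p$.

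Next I would apply Proposition \ref{propElementFixesFacet} to obtain $g\in G\setminus\{1\}$ with $\Fix g=\langle C\rangle$, so that $\dim\Fix g<2n-p$. On the other side, both $Y$ and $gY$ are $n$-dimensional analytic germs through the origin (note $g\cdot 0=0$ by linearity), so the standard intersection-dimension bound in $(\C^p,0)$ gives $\dim(Y\cap gY)\geq n+n-p=2n-p$. Comparing the two estimates, $Y\cap gY$ is strictly larger than $\Fix g$ and therefore cannot be contained in it; Lemma \ref{lemCharInjRefEmb} then forces $f$ to be non-injective.

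The steps identifying the facet and invoking Proposition \ref{propElementFixesFacet} are routine, so I expect the main point requiring care to be the intersection-dimension estimate $\dim(Y\cap gY)\geq 2n-p$. This is where working with germs rather than global varieties matters: I would justify it by the local intersection inequality, that every component of the intersection of germs of dimensions $a$ and $b$ in $(\C^p,0)$ has dimension at least $a+b-p$, which applies even in the generalized case where $h$ is merely finite and $Y$ need not be smooth.
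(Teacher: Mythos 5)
Your argument is correct and follows essentially the same route as the paper: identify the facet through the origin, note its dimension $p-\rank G<2n-p$, pick $g$ with $\Fix g=\langle C\rangle$ via Proposition \ref{propElementFixesFacet}, and compare with the local intersection bound $\dim(Y\cap gY)\geq 2n-p$ before invoking Lemma \ref{lemCharInjRefEmb}. The extra details you supply (that the origin lies in every reflecting hyperplane so $\langle C\rangle=V^\bot$, and that the intersection inequality holds for possibly singular analytic germs in the generalized case) are correct and only make the paper's argument more explicit.
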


	\begin{prop}\label{propExtendedLeConj}
There is no injective germ of  quasi-reflection map $f\colon (\C^n,0)\to(\C^p,0)$, with $\corank f- \corank h>p-n$.
\begin{proof}
We may assume $p\leq 2n$, since otherwise the statement is  trivial. If $f$ is injective, then $h$ is injective and the dimension of $Y$ is $n$.  If $C\in \sC$ be the facet passing through the origin, then by hypothesis implies $\dim (T_0Y\cap  C^\bot)\geq p-n+1$, and therefore $\dim (Y\cap  C)\leq n-(p-n+1)=2n-p-1$. The same argument as in the proof of Proposition \ref{propInjAndRankG} finishes the proof.
\end{proof}
	\end{prop}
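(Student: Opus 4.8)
\emph{Proof proposal.} The plan is to argue by contradiction, following the template of Proposition \ref{propInjAndRankG} but replacing its crude estimate on $\dim(Y\cap gY)$ by a sharper bound on $\dim(Y\cap C)$, where $C\in\sC$ is the facet through the origin. First I would dispose of the trivial range: the chain rule applied to $f=\omega\circ h$ gives $\rank df_0\le\rank dh_0$, so that $\corank f-\corank h=\rank dh_0-\rank df_0\le\rank dh_0\le n$, and hence the hypothesis $\corank f-\corank h>p-n$ forces $p<2n$. For $p\ge 2n$ there is nothing to prove.

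Now assume $p<2n$ and, for contradiction, that $f$ is injective. By Lemma \ref{lemCharInjRefEmb} the map $h$ is then injective, so $Y=\im h$ is an $n$-dimensional (possibly singular) germ and $Y\cap gY\subseteq\Fix g$ holds for every $g\in G\setminus\{1\}$. Since every reflecting hyperplane passes through the origin, the facet $C$ through the origin equals its own span $\langle C\rangle$, a linear subspace; Proposition \ref{propElementFixesFacet} then provides $g\in G$ with $\Fix g=\langle C\rangle=C$. Writing $T=\im dh_0$, Lemma \ref{lemKerdf} identifies $\ker df_0$ with $dh_0^{-1}(C^\bot)$, and passing to the quotient by $\ker dh_0$ yields the isomorphism $dh_0^{-1}(C^\bot)/\ker dh_0\cong T\cap C^\bot$, so that $\dim(T\cap C^\bot)=\corank f-\corank h\ge p-n+1$.

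The heart of the argument, and the step I expect to be the main obstacle, is the bound $\dim(Y\cap C)\le 2n-p-1$. The subtlety is that when $\corank h>0$ the image $Y$ is singular at the origin, so $T$ is not its tangent space and one cannot intersect $Y$ with $C$ naively. Instead I would realise $Y\cap C$ as a fibre: let $\pi\colon\C^p\to C^\bot$ be the orthogonal projection and $\Phi=\pi\circ h\colon(\C^n,0)\to(C^\bot,0)$. Since $C=\langle C\rangle$ is linear we have $Y\cap C=h^{-1}(C)=\Phi^{-1}(0)$ as germs, and because $h$ is injective and finite, $\dim(Y\cap C)=\dim_0\Phi^{-1}(0)$. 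Now $d\Phi_0=\pi\circ dh_0$, and as $\pi$ is the identity on $C^\bot$ while $T\cap C^\bot\subseteq T=\im dh_0$, the image of $d\Phi_0$ contains $T\cap C^\bot$; hence $\rank d\Phi_0\ge\dim(T\cap C^\bot)\ge p-n+1$. Bounding the fibre dimension of a holomorphic germ by $n$ minus the rank of its differential then gives $\dim(Y\cap C)\le n-(p-n+1)=2n-p-1$.

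Finally I would close as in Proposition \ref{propInjAndRankG}. Both $Y$ and $gY$ are $n$-dimensional germs meeting at the origin in $\C^p$, so every branch of $Y\cap gY$ through the origin has dimension at least $2n-p$; in particular $\dim(Y\cap gY)\ge 2n-p$. Were the injectivity condition $Y\cap gY\subseteq\Fix g=C$ to hold, then $Y\cap gY\subseteq Y\cap C$ would give $2n-p\le\dim(Y\cap gY)\le\dim(Y\cap C)\le 2n-p-1$, which is absurd. Therefore $f$ is not injective, as claimed.
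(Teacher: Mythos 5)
Your proof is correct and follows essentially the same route as the paper: reduce to $p<2n$, use Lemma \ref{lemKerdf} to get $\dim(T\cap C^\bot)=\corank f-\corank h\geq p-n+1$, deduce $\dim(Y\cap C)\leq 2n-p-1$, and conclude by the intersection-dimension argument of Proposition \ref{propInjAndRankG}. The only difference is that you spell out the step the paper leaves implicit (realising $Y\cap C$ as the fibre of $\pi\circ h$ and bounding its dimension by the rank of the differential), which is a worthwhile clarification but not a different argument.
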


	\begin{cor}\label{corLeConjRefMaps}
There is no injective germ of reflection map $f\colon (\C^n,0)\to (\C^p,0)$, with $\corank f>p-n.$
In particular, Conjecture \ref{conjLe} is true for reflection maps. 
	\end{cor}
	
In view of this result, it is natural to ask the following:
\begin{question*}
Are there injective germs $f\colon(\C^n,0)\to (\C^p,0)$ with $\corank f>p-n$?
\end{question*}
To my knowledge, the only answer so far is for the case of $n=p$, where a map is nonsingular if and only if it is locally injective. Moreover, the previous inequality is sharp, in the sense that we can produce injective germs $f\colon(\C^n,0)\to(\C^p,0)$, with $\corank f=p-n$, (assuming $p\leq 2n$, otherwise the question is trivial).
 Let $r=n-(p-n)$ and $s=(p-n)$, so that $n=r+s$ and $p=r+2s$. Write the coordinates in $\C^n$ as $x=(x_1,\dots,x_r)$ and $y=(y_1,\dots,y_s)$ and take, for $i=1,\dots, s$, pairs $(a_i,b_i)$ of coprime numbers.  It is easy to see that the reflection map 
\[(x,y)\mapsto(x_1,\dots,x_r,y_1{}^{a_1},y_1{}^{b_1},\dots,y_s{}^{a_s},y_s{}^{b_s}),\]
has corank $p-n$ and is injective.

				\section{Normal crossings}\label{secNormalCrossings}
				
	We introduce the notions of stability and $\cA$-finiteness and study normal crossings ---a necessary condition for any finite map to be stable--- for reflection maps. Normal crossings of reflection maps are characterized by two conditions, which can be checked easily in terms of the translates $gY$, with $g\in G$. Studying situations were violating these conditions is unavoidable, we obtain restrictions for normal crossings of reflection maps. Ultimately, these violations are  at the core of the obstructions for stability and $\cA$-finiteness in Section \ref{secObstrAFin}. We give minimal background on normal crossings; a detailed account on the topic (including why normal crossings is necessary for $\cA$-stability) can be found in \cite{Golu-Gui}. 
 
 	\begin{definition}\label{defAStability}
An unfolding $F=(t,f_t(x))\colon (\C^{r+n},\{0\}\times S)\to (\C^{r+p},0)$ of a multi-germ $f=f_0$ is called $\cA$-trivial if it is $\cA$-equivalent to $\id_{\C^r}\times f$ as an unfolding (that is, $\cA$-equivalent via biholomorphisms which are in turn unfoldings of the identity maps on $(\C^n,0)$ and $(\C^p,0)$).

A multi-germ $f\colon(\C^n,S)\to(\C^p,0)$ is \emph{$\cA$-stable} if every unfolding of $f$ is $\cA$-trivial. If, for a small representative of $f$, stability only fails at the preimage of the origin, then we say that $f$ is \emph{$\cA$-finite}. A finite map $f\colon X\to Y$ is $\cA$-stable if, for all $y\in Y$, the multi-germ of $f$ at $f^{-1}(y)$ is $\cA$-stable.
	\end{definition}

Since no equivalence relation other than $\cA$-equivalence appears here and we will not work with global stability, we say just stability instead of (local) $\cA$-stability. In the literature, $\cA$-finite maps are also called $\cA$-finitely determined maps, or maps of finite $\cA$-codimension.
 Some references for stability and $\cA$-finiteness are \cite{GibsonSingularPointsSmoothMappings,Golu-Gui,WallFiniteDeterminacyOfSmoothMapGerms}.

	\begin{notation*}
For any set $X$, we write the small and big diagonals in $X^k$, respectively, as $\Delta(X,k)=\{(x^{(1)},\dots,x^{(k)})\in X^k\mid x^{(i)}=x^{(j)},\text{ for all }i,j\leq k\}$
 and $D(X,k)=\{(x^{(1)},\dots,x^{(k)})\in X^k\mid x^{(i)}=x^{(j)},\text{ for some }i\neq j\leq k\}.$
We also write $\Delta X=\Delta(X,2)$ and $X^{(k)}=X^k\setminus D(X,k)$.
	\end{notation*}
 
	\begin{definition}
A map $f\colon X\to Y$ has \emph{normal crossings} if, for any $k\geq 2$, the restriction of $f\times \dots\times f$ to $X^{(k)}$ is transverse to $\Delta(Y,k)$. The definition extends to multi-germs taking representatives.
	\end{definition}

	\begin{lem}\label{lemImDifOrbitMapTrans}
Let $g\colon V\to V$ be a linear map and $\omega\colon V\to Y$ be a smooth $g$-invariant map. For any $y\in V$ and any vector subspace $W\leq V$, we have $d\omega_y(W)=d\omega_{gy}(gW).$
	\end{lem}

The proof is left to the reader. Now observe that the transversality condition that normal crossings imposes for $k=2$ at a point $(x,x')\in X^{(k)}$, with $f(x)=f(x')=y$, is equivalent to $\im df_x+\im df_{x'}=T_yY$. Putting together the previous result and Lemma \ref{KerDifOrbitMap}, it follows that the bigerm of $\omega$ at $\{y,gy\}$ fails to have normal crossings, for any $y\neq gy\in \sA$. As we shall see, this simple observation is the key to characterize normal crossings of the orbit map.

	\begin{prop}\label{lemNormalCrossingsOrbitMap}
The multi-germs of $\omega$ with normal crossings are exactly
\begin{enumerate}
\item \label{lemNormalCrossingsOrbitMapItem1} monogerms at points $y\in C$, where $C\in \sC$ is a facet of codimension $\leq 1$,
\item \label{lemNormalCrossingsOrbitMapItem2} multi-germs at $\{y,g_2y,\dots, g_ky\}$, with $y\notin \sA$ and $g_2,\dots,g_k$ different elements in $G\setminus\{1\}$.
\end{enumerate}
\begin{proof}
First we discuss the case of multi-germs: By Theorem \ref{thmStabilizerReflection}, there is an open neighborhood $U\subseteq V\setminus \cA$ of $\{y\}$, such that $U\cap g(U)=\emptyset$ for every $g\in G\setminus \{1\}$. For any point $y'\in U$, the fibre of $\omega(y')$ on $U'=U\cup g_2 U\cup\dots\cup g_k U$ is equal to $\{y',g_2y',\dots, g_ky'\}$. Now, from Lemma \ref{KerDifOrbitMap} follows that $\omega$ is a submersion at $y'$ and $g_iy'$, hence $\omega$ has normal crossings on $U'$. That these are the only multi-germs with normal crossings follows immediately from the observation we made just after the previous lemma.


Now we prove the claim for monogerms: If $y\in V\setminus \sA$, then $\omega$ is locally a biholomorphism, and has normal crossings trivially. We claim that on $\sA$, the only other monogerms with normal crossings are centered at points contained in just one reflecting hyperplane. First, observe that any neighborhood $U$ of $y$ can be shrinked, so that the intersection of $U$ with the orbit of any point $y'\in U$ is contained in the orbit of $y'$ by the stabilizer $G_y$ of $y$. If $y$ is contained in just one hyperplane $H$, then we may take $U$ small enough to satisfy $\sA\cap U=C\cap U$, for the facet $C$ containing $y$. By Corollary \ref{corStabilizer}, we have $G_{y'}=G_C=G_{y}$, for all $y'\in \sA\cap U$, and hence the restriction of $\omega$ to $\sA\cap U$ is one-to-one. Therefore, the transversalities to study around $y$ are necessarily of the form (\ref{lemNormalCrossingsOrbitMapItem2}), already checked. Now assume $y$ is contained in two different reflecting hyperplanes  $H_1, H_2$, given by reflections $r_1,r_2\in G$. Let $y'\in U\cap H_1\setminus H_2$. It follows from Theorem \ref{thmStabilizerReflection} that $r_2$ is contained  in the stabilizer $G_y$, but does not fix $y'$, and thus we have $y'\neq r_2y'\in U$. It follows by item (\ref{lemNormalCrossingsOrbitMapItem2}) that $(y',r_2y')$ does not satisfy the required transversality.
\end{proof}
	\end{prop}


Now we study the conditions that an injective map $h$ must satisfy in order to produce a  quasi-reflection map $f$ with normal crossings. As we shall see, $\omega$ is not involved in the computations and the normal crossings condition can be decided just in terms of the translates $gh, g\in G$.  The following two informal examples will provide the intuition: 

	\begin{ex}\label{exNotOrbNormCross}
Let $h\colon X\to \C^2$ parametrize the two left non-dashed curves in Figure \ref{figC2xC2NotOrbitNormalCrossings}. We see that the resulting $Z_{2,2}$-reflection map $f$ does not have normal crossings. However, there is no  apparent failure of transversality when we look just at $h$. The failure becomes clear when we also look at the dashed lines, which are the image of the translate of $h$ by $i_{1,1}\in Z_{2,2}$. 
	\begin{figure}
\begin{center}
\includegraphics[scale=1.2]{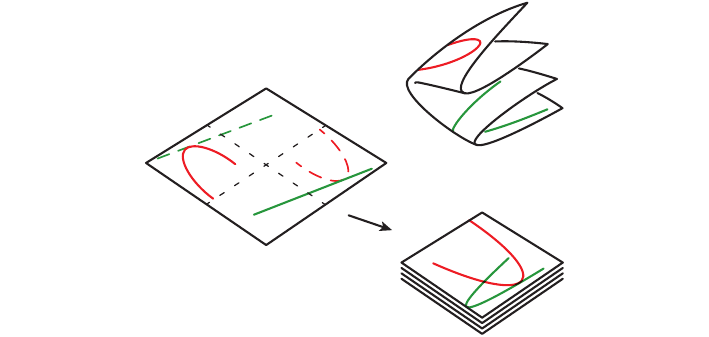}
\end{center}
\caption{Violation of the orbit normal crossings condition.}
\label{figC2xC2NotOrbitNormalCrossings}
	\end{figure}
	\end{ex}

	\begin{definition}\label{defOrbNormCross}
A map $h\colon X\to V$ has \emph{orbit normal crossings} (by $G$) if, for any $k\geq 2$ and any tuple $(g_1,\dots,g_k)\in G^k$, the restriction to $X^{(k)}$ of $g_1h\times \dots\times g_kh$ is transverse to $\Delta(V,k)$. The definition extends to germs by taking representatives.
	\end{definition}

	\begin{ex}\label{exNotOneToOrbit}
Now let $h\colon X\to \C^2$ parametrize the two lines in Figure \ref{figC2xC2NotNormalCrossings}. Again, we obtain a reflection map $f$ where the normal crossings condition fails. However, $h$ has orbit normal crossings, since any two translates cross transversally, and no more than two points are mapped to one by any choice of translates. The problem here is that $Y$ passes through $\sA$ at two points $h(x)=y$ and $h(x')= i_{1,0}y$ in the same orbit. Observe that both the image of $d\omega_y$ and of $d\omega_{{i_{1,0}}y}$ are  the subspace $\{(u_1,u_2)\in \C^2\mid u_1=0\}$. Therefore the images of $df_x$ and $df_{x'}$ cannot span the tangent of $\C^2$, regardless of what the differential of $h$  is at $x$ and $x'$. 
	\begin{figure}
\begin{center}
\includegraphics[scale=1.2]{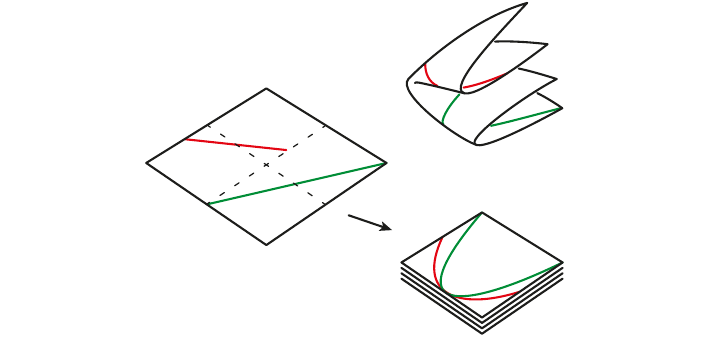}
\end{center}
\caption{Violation of the one-to-orbit over $\sA$ condition.}
\label{figC2xC2NotNormalCrossings}
	\end{figure}
	\end{ex}

	\begin{definition}\label{defOneToOrbOverA}
A map $h\colon X\to V$ is \emph{one-to-orbit over $\sA$} (by $G$) if the restriction of $\omega\circ h$ to $h^{-1}(\sA)$ is one-to-one. The definition extends to germs by taking representatives.
	\end{definition}

	\begin{thm}\label{thmCharacNormalCrossings}
A quasi-reflection map $f$ has normal crossings if and only if $h$ has orbit normal crossings and is one-to-orbit over $\sA$.

\begin{proof}
If $h$ is not one-to-orbit over $\sA$, the proof of item \ref{lemNormalCrossingsOrbitMapItem2} in Proposition \ref{lemNormalCrossingsOrbitMap} shows that $f$ does not have normal crossings, because the required transversality already fails for $\omega$. Therefore $h$ being one-to-orbit over $\sA$ is a necessary condition for $f$ to have normal crossings. Moreover, the condition one-to-orbit over $\sA$ also ensures that transversality to $\Delta(V,k)$ is satisfied by $f\times \dots \times f$ and by $g_1h\times\dots g_k h$ at all points $(x,x^{(2)},\dots,x^{(k)})$ with $h(x)\in \sA$. This is because the condition ensures that the image (by any of the two maps) of such $(x,x^{(2)},\dots,x^{(k)})$ cannot be contained in $\Delta(V,k)$, hence transversality is met trivially. 

This reduces the problem to check that the transversality conditions for $f^k$ and for $g_1h\times\dots g_k h$ are equivalent at points of the form $\underline x=(x,x^{(2)},\dots,x^{(k)})\in X^{(k)}$, such that $h(x)\notin\sA$, and $h(x^{(i)})=g_ih(x)$ for some $g_i\in G$. It follows from Lemma \ref{lemImDifOrbitMapTrans} that at such a point we have
\begin{align*}
(df^k)_{\underline x}T_{\underline x}X^k&=
(d\omega_{h(x)})^k(d(h\times g_2^{-1}h\times \dots\times g_k^{-1}h)_{\underline x}T_{\underline x}X^k).
\end{align*}
Since $h(x)\notin\sA$, it follows that $\omega$ is a local biholomorphism at $h(x)$, and therefore $(d\omega_{h(x)})^k$ is an isomorphism taking the tangent $T_{(h(x),\dots,h(x))}\Delta(Y,k)$ to the tangent $T_{f^{k}(\underline x)}\Delta(Y,k)$. Therefore, we have
\[(df^k)_{\underline x}T_{\underline x}X^k+T_{f^{k}(\underline x)}\Delta(Y,k)=(d\omega_{h(x)})^k\Big(d(h\times g_2^{-1}h\times \dots\times g_k^{-1}h)_{\underline x}T_{\underline x}X^k+T_{(h(x),\dots,h(x))}\Delta(Y,k)\Big),\]
and the claim follows.
\end{proof}
	\end{thm}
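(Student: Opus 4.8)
The plan is to reduce the normal crossings condition for $f$ to a transversality check at its genuine multiple points, and then to split this check according to whether these points lie over the arrangement $\sA$. Fix $k\geq 2$ and a tuple $\underline x=(x^{(1)},\dots,x^{(k)})\in X^{(k)}$. Transversality of $f\times\dots\times f$ to $\Delta(\C^p,k)$ at $\underline x$ is automatic unless $f(x^{(1)})=\dots=f(x^{(k)})$, so only such tuples need attention. By Theorem \ref{thmAlphaGeomInv} this condition means precisely that all the points $h(x^{(i)})$ lie in a single $G$-orbit; writing $y=h(x^{(1)})$, we have $h(x^{(i)})=g_iy$ for suitable $g_i\in G$. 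Since $\sA$ is $G$-invariant, either $y\in\sA$ (so all $h(x^{(i)})\in\sA$) or $y\notin\sA$, and I will treat these two situations separately: the first governed by the one-to-orbit over $\sA$ condition, the second by orbit normal crossings.

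Consider first the multiple points with $y\in\sA$. If $h$ fails to be one-to-orbit over $\sA$, I would produce two distinct points $x,x'\in h^{-1}(\sA)$ with $\omega(h(x))=\omega(h(x'))$, hence $h(x')=gh(x)$ for some $g\in G$ and $h(x)=:y\in\sA$. By Proposition \ref{propJacobianHyperplanes} the Jacobian of $\omega$ vanishes on $\sA$, so $T:=d\omega_y(\C^p)$ is a proper subspace of $\C^p$; applying Lemma \ref{lemImDifOrbitMapTrans} with $W=\C^p$ gives $d\omega_{gy}(\C^p)=T$ as well. The chain rule then forces both $df_x(T_xX)$ and $df_{x'}(T_{x'}X)$ into $T$, whence the image of $d(f\times f)$ at $(x,x')$ lies in $T\oplus T$; since $T\oplus T$ together with the tangent space to $\Delta(\C^p,2)$ cannot span $\C^p\oplus\C^p$ when $T\neq\C^p$, transversality fails and $f$ has no normal crossings. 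Conversely, if $h$ is one-to-orbit over $\sA$, then no tuple $\underline x\in X^{(k)}$ of distinct points with all $h(x^{(i)})\in\sA$ can satisfy $f(x^{(1)})=\dots=f(x^{(k)})$, so there is no multiple point to test over $\sA$ and transversality holds vacuously there.

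It remains to treat the multiple points with $y=h(x^{(1)})\notin\sA$, and here I expect to recover exactly the orbit normal crossings condition. Off $\sA$ the map $\omega$ is a local biholomorphism (Proposition \ref{propJacobianHyperplanes}), so $(d\omega_y)^k$ is an isomorphism carrying the diagonal tangent at $(y,\dots,y)$ to the diagonal tangent at $(\omega(y),\dots,\omega(y))$. Applying the chain rule to $f=\omega\circ h$ and rewriting each summand $d\omega_{g_iy}(dh_{x^{(i)}}T_{x^{(i)}}X)$ as $d\omega_y\big(g_i^{-1}dh_{x^{(i)}}T_{x^{(i)}}X\big)$ via Lemma \ref{lemImDifOrbitMapTrans}, I can factor the entire image of $d(f\times\dots\times f)$ through this single isomorphism, thereby identifying it with the image of $d(h\times g_2^{-1}h\times\dots\times g_k^{-1}h)$. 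Consequently $f\times\dots\times f$ is transverse to $\Delta(\C^p,k)$ at $\underline x$ if and only if $h\times g_2^{-1}h\times\dots\times g_k^{-1}h$ is. Finally, since the global linear automorphism acting as a fixed $g\in G$ on every factor preserves $\Delta(\C^p,k)$ and hence transversality to it, one may always normalize the first factor of a translate-product to $h$; thus the tuples $(1,g_2^{-1},\dots,g_k^{-1})$ arising here exhaust, up to this normalization, all tuples in the definition of orbit normal crossings, and the equivalence with that condition follows.

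The main obstacle I anticipate is not any single computation but the clean organization of the two regimes: one must verify that the one-to-orbit over $\sA$ condition disposes of \emph{every} multiple point lying over $\sA$ (so that transversality there is either forced to fail or is vacuous), and that off $\sA$ the translate-products appearing in the factorization genuinely range over all tuples relevant to orbit normal crossings. The key technical input making both regimes tractable is Lemma \ref{lemImDifOrbitMapTrans}, which transports every differential $d\omega_{g_iy}$ back to the common point $y$; the remainder is transversality bookkeeping resting on Proposition \ref{lemNormalCrossingsOrbitMap}.
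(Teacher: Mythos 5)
Your proposal is correct and follows essentially the same route as the paper: the same split into multiple points over $\sA$ (handled by the one-to-orbit condition, with failure detected exactly as in item (2) of Proposition \ref{lemNormalCrossingsOrbitMap}) and off $\sA$ (where Lemma \ref{lemImDifOrbitMapTrans} transports each $d\omega_{g_iy}$ back to $y$ and the whole differential factors through the isomorphism $(d\omega_y)^k$, identifying transversality for $f^k$ with that for the translate-product). The only difference is presentational: you inline the bigerm computation instead of citing it, and you make explicit the normalization $g_1=1$ via a global group element preserving the diagonal, which the paper leaves implicit.
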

	
	\begin{cor}\label{corOnlySmoothBranches}
All multi-germs (monogerms excluded) of reflection maps with normal crossings have only smooth branches.
\begin{proof}Assume $f\colon (\C^n,S)\to (\C^p,y)$ has normal crossings. Since $S$ has more than two elements, the one-to-orbit over $\sA$ condition implies $h(S)\subset V\setminus \sA$. The statement follows because $h$ is an immersion, and $\omega$ is locally  immersive around $h(S)$.
\end{proof}
 	\end{cor}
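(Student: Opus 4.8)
The plan is to show that the one-to-orbit over $\sA$ condition forces $h(S)$ to avoid the arrangement $\sA$ entirely, after which smoothness of the branches is immediate from the fact that $\omega$ is a local biholomorphism away from $\sA$. So the whole argument is really a one-line consequence of Theorem \ref{thmCharacNormalCrossings} once the right fiber over $y$ is identified.

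First I would exploit that $f=\omega\circ h$ sends every point of $S$ to the common target $y$. By Theorem \ref{thmAlphaGeomInv} we have $\omega^{-1}(y)=G\,h(s)$ for any fixed $s\in S$, so the points $\{h(s)\mid s\in S\}$ all lie in a single $G$-orbit; since $h$ is injective and the points of $S$ are distinct, this orbit contains at least two of them (here is where the exclusion of monogerms is used). Because $\sA$ is a union of reflecting hyperplanes, it is $G$-invariant, so either every $h(s)$ lies in $\sA$ or none does.

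The key step is to rule out the first alternative. If all the $h(s)$ were in $\sA$, then $S\subseteq h^{-1}(\sA)$, and the restriction of $f=\omega\circ h$ to $h^{-1}(\sA)$ would send at least two distinct points of $S$ to the same point $y$, contradicting the one-to-orbit over $\sA$ condition. Since $f$ has normal crossings, Theorem \ref{thmCharacNormalCrossings} guarantees that $h$ is one-to-orbit over $\sA$, so this contradiction yields $h(S)\subseteq\C^p\setminus\sA$.

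Finally, since $\sA$ is exactly the locus where $\omega$ is singular (by Proposition \ref{propJacobianHyperplanes}, $J(\omega)$ vanishes precisely on $\sA$, equivalently $\ker d\omega_x=0$ off $\sA$ by Lemma \ref{KerDifOrbitMap}), the orbit map is a local biholomorphism at each $h(s)$. As $h$ is an embedding, hence an immersion, the composition $f$ is an immersion at every $s\in S$, so each branch of $f$ has smooth image. The only (minor) obstacle is the bookkeeping around the $G$-invariance of $\sA$ and the correct reading of the one-to-orbit condition as an injectivity statement on $h^{-1}(\sA)$; once these are in place the conclusion follows at once.
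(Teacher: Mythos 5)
Your proof is correct and follows essentially the same route as the paper's: normal crossings gives the one-to-orbit over $\sA$ condition via Theorem \ref{thmCharacNormalCrossings}, which forces $h(S)$ off the arrangement $\sA$, after which $\omega$ is a local biholomorphism and $f$ is immersive at every point of $S$. Your explicit use of the $G$-invariance of $\sA$ to upgrade ``at most one $h(s)$ lies in $\sA$'' to ``none does'' is a detail the paper leaves implicit, but it is the same argument.
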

The following immediate result supports our previous claim, that we are able to decide whether a quasi-reflection map $f$ has normal crossings just by looking at the translates $gh$, with $g\in G$:
	\begin{lem}\label{lemOneToOrbit}
Let $Y\subseteq V$ be the image of $h\colon X\to V$. Then $h$ is one-to-orbit over $\sA$ if and only if
 the restriction of $h$ to $h^{-1}(\sA)$ is injective and, for all $g\in G\setminus \{1\}$, we have 
\[Y\cap gY\cap \sA\subseteq \Fix \,g.\]
\begin{proof}Analogous to the proof of Lemma \ref{lemCharInjRefEmb}.
\end{proof}
	\end{lem}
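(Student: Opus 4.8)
The plan is to read the \emph{one-to-orbit over $\sA$} condition as an injectivity statement and then run the set-theoretic argument of Lemma \ref{lemCharInjRefEmb} restricted to the part of $X$ lying over the arrangement. By Definition \ref{defOneToOrbOverA}, $h$ is one-to-orbit over $\sA$ precisely when $f=\omega\circ h$ is injective on $X_\sA:=h^{-1}(\sA)$, and the image $h(X_\sA)$ is $Y\cap\sA$. The key structural fact I would use is that $\sA$ is $G$-invariant: because $G$ permutes the reflecting hyperplanes we have $g\sA=\sA$ for all $g\in G$, so that $g(Y\cap\sA)=gY\cap\sA$ and, in particular, $\omega$-collisions among points of $Y\cap\sA$ never leave $\sA$.

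With this in hand I would carry out the three steps of the proof of Lemma \ref{lemCharInjRefEmb}, now applied to $h|_{X_\sA}$. First, injectivity of $h|_{X_\sA}$ is necessary, since $h(x)=h(x')$ with $x\neq x'$ immediately gives $f(x)=f(x')$; this is the first clause of condition (2). Second, assuming $h|_{X_\sA}$ injective, Theorem \ref{thmAlphaGeomInv} turns any equality $f(x)=f(x')$ into a relation $h(x')=g\,h(x)$ with $g\in G$, so a nontrivial collision is a pair $y=h(x)$ and $y'=gy$ in $Y\cap\sA$ with $g\neq 1$ and $y\notin\Fix g$. Third, unwinding this, the nonexistence of such pairs for every $g\in G\setminus\{1\}$ is exactly the inclusion $Y\cap gY\cap\sA\subseteq\Fix g$, which completes the equivalence (1)$\Leftrightarrow$(2).

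The only step that needs genuine care --- and the closest thing to an obstacle, albeit a routine one --- is the fixed-point bookkeeping guaranteeing that the inclusion really excludes collisions: from $y'=gy$ and $y'\in\Fix g$ one applies $g$ to get $g^2y=gy$, hence $gy=y$ and $y=y'$, so no distinct pair survives. Together with the $G$-invariance of $\sA$, this is all that separates the argument from the one for Lemma \ref{lemCharInjRefEmb}; since that argument is purely set-theoretic, the fact that $X_\sA$ need not be smooth causes no trouble, and I anticipate no further difficulty.
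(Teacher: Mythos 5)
Your proposal is correct and matches the paper's intent exactly: the paper's proof is literally the one-line remark ``analogous to Lemma \ref{lemCharInjRefEmb},'' and what you have written is precisely that set-theoretic argument restricted to $h^{-1}(\sA)$, with the two details the paper leaves implicit (the $G$-invariance of $\sA$, which guarantees that a collision partner of a point of $Y\cap\sA$ again lies in $\sA$, and the observation that $y'=gy\in\Fix g$ forces $y=y'$) correctly supplied. Nothing further is needed.
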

	
	\begin{ex}\label{exNotOneToOrbit2}
Let $h$ be as in Example \ref{exNotOneToOrbit}. For $g=i_{10}$ and $g=i_{11}$,  we have $Y\cap gY\cap (\sA\setminus \Fix  g)\neq \emptyset$, so $h$ is not one-to-orbit over $\sA$ (see Figure \ref{figNotOneToOrbLinesB}). Is not by chance that the condition fails for two elements of the group: if the one-to-orbit over $\sA$ condition fails for $g$ at $y$ ---that is, if $y\in Y\cap gY \cap (\sA\setminus \Fix  g)$--- then the condition fails for $g'g$ at $y$ as well, for every $g'\in G_y$. 
\begin{figure}
\begin{center}
\includegraphics[scale=1.2]{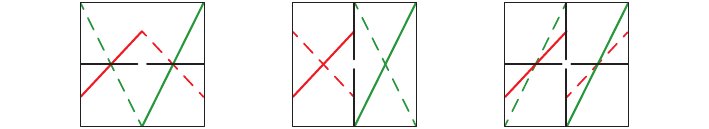}
\end{center}
\caption{$Y, gY$ and $\sA\setminus \Fix  g$, for $g=i_{10},i_{01}$ and $i_{11}$, respectively. Looking at these we can tell whether or not $h$ is one-to-orbit over $\cA$ (see Example \ref{exNotOneToOrbit2})}
\label{figNotOneToOrbLinesB}
\end{figure}
	\end{ex}
	
From this description of the one-to-orbit over $\sA$ condition, we obtain the following obstruction for a reflection map to have normal crossings:

	\begin{prop}\label{propObstrNormalCross}
There is no germ of quasi-reflection map $f\colon (\C^n,0)\to (\C^p,0)$ with normal crossings and $\rank G >2(p-n)+1$.
\begin{proof}
It suffices to show that $h$ is not one-to-orbit over $\sA$. The set $Y\cap gY\cap \sA$ is not empty, because it contains the origin, hence it has dimension at least $2n-p-1$. If we let $\rank G=r$, then the facet $C\in \sC$ containing the origin has dimension $p-r$. By Proposition \ref{propElementFixesFacet}, there exists $g\in G$, such that $C=Fix \ g$. If $r>2(p-n)+1$, then $p-r<p-(2(p-n)+1)=2n-p-1$. Hence, $Y\cap gY\cap \sA$ is not contained in $\Fix \, g$, and the claim follows from Lemma \ref{lemOneToOrbit}.
\end{proof}
	\end{prop}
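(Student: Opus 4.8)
The plan is to reduce everything to the one-to-orbit over $\sA$ condition via Theorem \ref{thmCharacNormalCrossings}, and then run a dimension count parallel to the one in Proposition \ref{propInjAndRankG}. Since normal crossings of $f$ forces $h$ to be one-to-orbit over $\sA$, it suffices to show that this latter condition fails as soon as $\rank G>2(p-n)+1$. Using the geometric reformulation of Lemma \ref{lemOneToOrbit}, with $Y=\im h$, I would produce a single element $g\in G\setminus\{1\}$ for which $Y\cap gY\cap\sA\not\subseteq\Fix\, g$, which is exactly what obstructs the condition.

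First I would fix the facet $C\in\sC$ passing through the origin. Because every reflecting hyperplane is linear, the origin lies in all of them, so $\langle C\rangle$ is the intersection of all the $H_i$, namely $V^\bot$ with $V$ the span of the roots; hence $\dim\langle C\rangle=p-\rank G$, the smallest facet dimension available. Writing $r=\rank G$ and invoking Proposition \ref{propElementFixesFacet}, I can choose $g\in G$ with $\Fix\, g=\langle C\rangle$, so that $\dim\Fix\, g=p-r$. Next comes the estimate on the relevant intersection: $Y$ and $gY$ are germs of $n$-dimensional analytic sets in $\C^p$, so every component of $Y\cap gY$ has dimension at least $2n-p$, and intersecting further with the hypersurface $\sA$ drops the dimension by at most one, giving $\dim(Y\cap gY\cap\sA)\geq 2n-p-1$. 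This set is nonempty, as it contains the origin. The hypothesis $r>2(p-n)+1$ then yields $p-r<2n-p-1$, so $\dim\Fix\, g<\dim(Y\cap gY\cap\sA)$, the containment into $\Fix\, g$ is impossible, and Lemma \ref{lemOneToOrbit} finishes the argument.

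The argument is elementary once this reduction is identified, and its shape mirrors Proposition \ref{propInjAndRankG}: injectivity only constrains $Y\cap gY$ (dimension $\geq 2n-p$), whereas normal crossings constrains its trace on the arrangement $\sA$, which is one dimension smaller, and this single codimension-one cut is precisely the origin of the extra ``$+1$'' in the bound. The point I expect to require the most care is justifying the inequality $\dim(Y\cap gY\cap\sA)\geq 2n-p-1$ rather than some weaker bound: this rests on $\sA$ being a hypersurface, so that cutting by it lowers dimension by at most one, together with the nonemptiness supplied by the origin. Everything else is the routine comparison $p-r<2n-p-1$.
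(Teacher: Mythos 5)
Your proof is correct and follows essentially the same route as the paper's: reduce to failure of the one-to-orbit over $\sA$ condition, pick $g$ with $\Fix\, g=\langle C\rangle$ for the facet $C$ through the origin via Proposition \ref{propElementFixesFacet}, and compare $\dim(Y\cap gY\cap\sA)\geq 2n-p-1$ with $\dim\Fix\, g=p-r$. Your extra justification of the dimension estimate (intersection of two $n$-dimensional sets cut by the hypersurface $\sA$, nonempty because it contains the origin) is exactly the reasoning the paper uses implicitly.
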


\begin{rem}
The previous proposition  imposes no restrictions when $p\geq 2n-1$. On the opposite side, it says that only groups with $\rank G=1$ can produce reflection maps $(\C^p,0)\to (\C^p,0)$. In particular, as the orbit map $\omega$ is a reflection map $(\C^p,0)\to (\C^p,0)$, the requirement $\rank G=1$ is consistent with the description of its normal crossings (Proposition \ref{lemNormalCrossingsOrbitMap}), because it implies that there is no facet of codimension $> 1$.
\end{rem}

Putting together the results we have so far and the following classical theorem (essentially due to Whitney \cite{Whitney:1936}, see \cite{Golu-Gui} for a more up-to-date statement) we can characterize stability of reflection maps $f\colon X\to \C^{2n}$, with $n=\dim X$.

	\begin{thm}
A map $f\colon X^n\to Y^{2n}$ is stable if and only if it is an immersion with normal crossings.
	\end{thm}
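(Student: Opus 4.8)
The plan is to prove both implications, exploiting heavily that the target dimension sits exactly at $p=2\dim X$. Throughout I write $n=\dim X$, so that $p=2n$. The backward implication will be a stability verification for the only multigerms the normal crossings condition allows in this dimension, and the forward implication will combine the fact (recalled earlier) that normal crossings is necessary for stability with a Thom--Boardman codimension count that forbids singular points.

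For the implication that an immersion with normal crossings is stable, I would first dispose of monogerms: an immersion germ is stable, since any unfolding is again an immersion and is trivialized by the implicit function theorem. For genuine multigerms I would first note how restrictive normal crossings is at $p=2n$: a transverse $k$-fold point of $n$-dimensional immersed branches has expected dimension $kn-(k-1)2n=n(2-k)$, so only isolated double points ($k=2$) survive and no triple points can occur. It therefore suffices to show that a transverse double point is stable. I would do this through Mather's infinitesimal stability criterion, which asks that every vector field $\xi$ along $f$ decompose as a field of the form $df\circ a$ (with $a$ a source vector field) plus a field of the form $\eta\circ f$ (with $\eta$ a target vector field). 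Writing the bigerm as two immersions through $0\in\C^{2n}$ with tangent spaces $T_1,T_2$, the transversality $T_1\oplus T_2=T_0\C^{2n}$ lets one solve this at the base point: a target field $\eta$ prescribes the common value $\eta(0)$ at both branch points, and the prescribed source data $\xi_1,\xi_2$ on the two branches can be matched because $\xi_1(0)-\xi_2(0)$ automatically lies in $T_1+T_2=\C^{2n}$, so a suitable $\eta(0)$ exists. One then extends to a neighborhood by the standard reduction of infinitesimal stability to conditions at the point, and invokes Mather's theorem that infinitesimal stability is equivalent to stability to conclude.

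For the converse, normal crossings is already known to be necessary for $\cA$-stability, so the only thing left is to show that a stable $f$ is an immersion. Here I would use Thom--Boardman theory: a stable map is transverse to the Thom--Boardman strata, so the corank locus $\Sigma^1=\{x : \corank f_x\geq 1\}$ is either empty or a submanifold of codimension $p-n+1$ in the source. Since $p=2n$, this codimension equals $n+1>n=\dim X$, which forces $f^{-1}(\Sigma^1)=\emptyset$; that is, $\corank f_x=0$ everywhere and $f$ is an immersion. (As a sanity check, the same count at $p=2n-1$ gives codimension $n$, an isolated locus of fold points, which is consistent with Whitney's description of stable maps $\C^n\to\C^{2n-1}$ as fold maps.)

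The genuinely routine parts are the two dimension counts and the geometric description of normal crossings at $p=2n$. The weight of the argument rests on two classical inputs: Mather's equivalence of stability and infinitesimal stability, used in the easy direction to certify that transverse double points are \emph{actually} stable and not merely the only candidates, and the transversality of stable maps to the Thom--Boardman strata, used in the hard direction to rule out corank. I expect the main obstacle to be the clean verification of infinitesimal stability for the double-point bigerm, as this is precisely where the transversality hypothesis $T_1\oplus T_2=\C^{2n}$ is consumed; everything else is bookkeeping with codimensions. Since both inputs are standard and the theorem itself is classical, in the write-up I would cite them rather than reprove them.
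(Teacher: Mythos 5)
The paper does not actually prove this statement: it is quoted as a classical theorem, ``essentially due to Whitney,'' with Golubitsky--Guillemin cited for a modern formulation, and it is then used as a black box to characterize stability and $\cA$-finiteness of reflection maps into $\C^{2n}$. Your sketch is therefore not comparable to an argument in the paper, but it is a correct outline of the standard textbook proof. Both dimension counts are right: transversality of $f^k|_{X^{(k)}}$ to $\Delta(Y,k)$ forces the strict $k$-fold locus to have dimension $kn-(k-1)2n=n(2-k)$, so double points are isolated and triple points are absent; and transversality of $j^1f$ to the Thom--Boardman stratum $\Sigma^i$, of codimension $i(p-n+i)\geq p-n+1=n+1>n$, rules out singular points of a stable map (strictly speaking one should quote transversality to each $\Sigma^i$, $i\geq 1$, not just to the corank-one stratum, but the codimensions only grow). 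The one place where your write-up is looser than it should be is the verification of infinitesimal stability of the transverse bigerm: the condition $\theta(f)=tf(\theta_X)+\omega f(\theta_Y)$ is a statement about modules over the local rings, not merely about values at the base point, so ``extend to a neighborhood by the standard reduction'' is doing real work. For a pair of immersed branches it does reduce to explicit linear algebra: choose coordinates in which the branches are the factors $\C^n\times\{0\}$ and $\{0\}\times\C^n$ (possible exactly because $T_1\oplus T_2=\C^{2n}$), split each prescribed field $\xi_i$ into tangential and normal components, and take the target field $\eta(y',y'')=\bigl(\xi_2''(y''),\xi_1''(y')\bigr)$ assembled from the normal components; this realizes the decomposition globally on a neighborhood, not just at the origin. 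With that step made explicit the argument is complete, and the two classical inputs you invoke --- Mather's equivalence of stability and infinitesimal stability, and transversality of stable maps to the Thom--Boardman strata --- are exactly what one would cite; the paper's own choice is simply to cite the entire theorem instead.
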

	
	\begin{cor}
A reflection map $f\colon X^n\to \C^{2n}$  is stable if and only if
\begin{enumerate}
\item\label{EstNTo2N1} $h$ has orbit normal crossings,
\item\label{EstNTo2N2} $Y\cap gY\cap \sA\subseteq \Fix  g$, for all $g\in G\setminus\{1\},$
\item\label{EstNTo2N3} $T_yY\cap  C^\bot=0$, for all $y\in Y$ and the facet $C\in \sC$ containing $y$.
\end{enumerate}
	\end{cor}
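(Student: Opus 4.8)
The plan is to invoke the Whitney-type characterization stated immediately above: since $\dim\C^{2n}=2n=2\dim X$, the map $f$ is stable if and only if it is an immersion with normal crossings. It therefore suffices to translate the two conditions ``immersion'' and ``normal crossings'' into the list $(1)$--$(3)$, handling each half separately and then combining them.

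For the normal crossings half, I would apply Theorem \ref{thmCharacNormalCrossings}, which says that $f$ has normal crossings exactly when $h$ has orbit normal crossings and is one-to-orbit over $\sA$. The first of these is condition $(1)$ verbatim. For the second I would feed $h$ into Lemma \ref{lemOneToOrbit}: being one-to-orbit over $\sA$ is equivalent to $h|_{h^{-1}(\sA)}$ being injective together with $Y\cap gY\cap\sA\subseteq\Fix\,g$ for all $g\in G\setminus\{1\}$. The key simplification is that, for a genuine reflection map, $h$ is an embedding and hence globally injective, so the injectivity clause holds automatically; what remains is precisely condition $(2)$. Thus conditions $(1)$ and $(2)$ together are equivalent to $f$ having normal crossings.

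For the immersion half, I would use Lemma \ref{lemKerdf}: if $h(x)$ lies in the facet $C\in\sC$, then $\ker(df_x)=dh_x^{-1}(C^\bot)$. Since $h$ is an embedding, $dh_x$ is injective, so a nonzero element of $dh_x^{-1}(C^\bot)$ exists if and only if $\im dh_x$ meets $C^\bot$ nontrivially; that is, $\ker(df_x)=0$ if and only if $\im dh_x\cap C^\bot=0$. Requiring this at every $x$ is exactly condition $(3)$, so $f$ is an immersion if and only if $(3)$ holds. Combining the two halves, $f$ is stable if and only if $(1)$, $(2)$ and $(3)$ all hold.

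This argument is essentially a bookkeeping assembly of earlier results, so there is no serious obstacle; the only point demanding care is the role of the embedding hypothesis. It is what lets me discard the injectivity clause of Lemma \ref{lemOneToOrbit}, reducing the one-to-orbit condition to the clean containment $(2)$, and simultaneously what makes $dh_x$ injective so that the kernel computation of Lemma \ref{lemKerdf} collapses to the intersection statement $(3)$. For a merely generalized reflection map one would instead have to retain the extra injectivity requirement on $h^{-1}(\sA)$, which is why the statement is phrased for reflection maps.
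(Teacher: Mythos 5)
Your argument is correct and is exactly the derivation the paper intends: the corollary is stated without proof immediately after the Whitney-type theorem, and it follows by combining that theorem with Theorem \ref{thmCharacNormalCrossings}, Lemma \ref{lemOneToOrbit} and Lemma \ref{lemKerdf} precisely as you do. Your remarks on where the embedding hypothesis enters (discarding the injectivity clause of Lemma \ref{lemOneToOrbit} and collapsing $\ker df_x=dh_x^{-1}(C^\bot)$ to the intersection condition) are the right points of care.
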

	
	\begin{cor}
A germ of reflection map $f\colon (\C^n,0)\to (\C^{2n},0)$ is $\cA$-finite if and only if $h$ satisfies (\ref{EstNTo2N1}) and (\ref{EstNTo2N2}) above, and the following equivalent conditions:
\begin{enumerate}
 \item[(3a)] $T_yY\cap  C^\bot=0$, for all $0\neq y\in Y$ and the facet $C\in \sC$ containing $y$.
 \item[(3b)] $T_0Y\cap  C'^\bot=0$, for all facets $C'$ intersecting $Y$ but not containing the origin. For the facet $C$ containing the origin, $T_yY\cap  C^\bot=0$, for all $0\neq y\in Y\cap C$.
\end{enumerate}
\begin{proof}It is clear that $(1),(2)$ and $(3a)$ characterize $\cA$-finiteness. The equivalence between $(3a)$ and $(3b)$ follows from the fact that transversality is an open condition. Since it holds at $0$, it is satisfied on a small open neighborhood of the origin.
\end{proof}
	\end{cor}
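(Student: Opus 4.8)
The plan is to reduce everything to the stability characterization of the preceding corollary, using that for a germ $\cA$-finiteness is exactly \emph{stability off an isolated point}. Since $p=2n$, the quoted Whitney-type theorem says a germ of $f$ is stable precisely when it is an immersion with normal crossings. By Definition \ref{defAFiniteness}, $f$ is $\cA$-finite iff, for a small representative $f\colon U\to V$, the non-stable locus in $V$ has dimension $\le 0$; as $f$ is finite this is the same as $f$ being stable at every $y\ne 0$ (whose fibre lies in $U\setminus\{0\}$), i.e.\ on the punctured target. So the whole statement amounts to
\[
f|_{U\setminus f^{-1}(0)}\ \text{stable}\quad\Longleftrightarrow\quad (1),(2)\ \text{and}\ (3a)\ \text{hold.}
\]

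I would first dispose of the immersion clause. By Lemma \ref{lemKerdf}, $f$ is an immersion at $x$ iff $\im dh_x\cap C^\bot=0$, where $C\in\sC$ contains $h(x)$; hence the non-immersion locus $\Sigma\subseteq(\C^n,0)$ is an analytic germ, and $\Sigma\subseteq\{0\}$ exactly when immersivity holds at every $x\ne0$, which is $(3a)$. Thus $(3a)$ is precisely immersivity on the punctured source, and when it fails $\Sigma$ is positive-dimensional, so the singular points — and with them the instability — accumulate at the origin. This is the sense in which clause $(3)$, unlike $(1)$ and $(2)$, may be relaxed at the origin: a single non-immersive point at $0$ contributes only a zero-dimensional piece of the non-stable locus.

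For the normal-crossings clauses I would invoke Theorem \ref{thmCharacNormalCrossings}: a reflection map has normal crossings iff $h$ has orbit normal crossings and is one-to-orbit over $\sA$; because $h$ is an embedding, hence injective, Lemma \ref{lemOneToOrbit} identifies the one-to-orbit condition with $(2)$, so normal crossings is governed by $(1)$ and $(2)$. The key remark is that these cannot be relaxed at $0$. For $(2)$ this is transparent: the origin always lies in $Y\cap gY\cap\sA$ and, $g$ being linear, in $\Fix g$, so $0$ never violates $(2)$; a violation is a point of the analytic germ $Y\cap gY\cap\sA$ lying off the linear space $\Fix g$, and if such points exist in every neighbourhood of $0$ they accumulate at $0$, giving instability accumulating at $0$. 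The same holds for $(1)$ through the computation in the proof of Proposition \ref{lemNormalCrossingsOrbitMap}(2). Conversely an isolated failure of $(1)$ or $(2)$ away from $0$ is deleted by shrinking the representative and affects neither $\cA$-finiteness nor the germ conditions $(1),(2)$. Together with the previous paragraph this yields $\cA$-finite $\iff (1),(2),(3a)$.

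It remains to prove $(3a)\iff(3b)$. The facet $C$ through the origin is the smallest, so $C'^\bot\subseteq C^\bot$ for every facet $C'$ and $h(0)=0\in C$. The implication $(3b)\Rightarrow(3a)$ is the openness of transversality: for a non-central facet $C'$ meeting $\im h$, the condition $\im dh_0\cap C'^\bot=0$ is open in $x$, hence persists for all $x$ near $0$, in particular for those with $h(x)\in C'$; with the central-facet clause this gives immersivity at every $x\ne0$. The reverse implication is the main obstacle, and I do not expect it to follow from openness alone, since $\dim(\im dh_x\cap C'^\bot)$ is only upper semicontinuous in $x$, so immersivity at nearby points of $C'$ does not formally force $\im dh_0\cap C'^\bot=0$. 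The plan is to argue the contrapositive via curve selection: if $\im dh_0\cap C'^\bot\ne0$ for some non-central $C'$ meeting $\im h$, pick $0\ne v=dh_0(u)\in C'^\bot$ and an arc $x(\tau)\to0$ with $h(x(\tau))\in C'$; the velocity of $h(x(\tau))$ lies in $\im dh_{x(\tau)}\cap\langle C'\rangle$, and one must show that the orthogonal splitting $\C^p=\langle C'\rangle\oplus C'^\bot$ together with $v\in\im dh_0\cap C'^\bot$ produces non-immersive points of $f$ accumulating at $0$. Simple examples show these points need not lie on $C'$ itself but on a facet incident to the origin that is, in a sense, complementary to $C'$; pinning down this coupling — by analysing all facets whose closure contains $0$ and the decompositions they induce on $\im dh_0$ — is where the real work of the equivalence lies.
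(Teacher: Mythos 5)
Your reduction of $\cA$-finiteness to stability off the origin, your treatment of clauses (1) and (2) (they cannot be relaxed at $0$ because violations form analytic germs accumulating there), and your proof of (3b)~$\Rightarrow$~(3a) by openness of transversality all match what the paper does --- its entire argument for the equivalence is the one-line openness remark, which indeed only covers that direction. The gap is the implication (3a)~$\Rightarrow$~(3b), which you explicitly leave unfinished. Your instinct that it does not follow from semicontinuity is right, but the curve-selection programme you sketch cannot be completed, because the implication is false as stated. Take $G=Z_{2,2,1,1}$ acting on $\C^4$, with reflecting hyperplanes $H_1=\{y_1=0\}$ and $H_2=\{y_2=0\}$, and the embedding
\[
h(x_1,x_2)=(x_1,\ x_2,\ x_1x_2+x_1{}^3,\ x_1{}^3+x_2{}^3),\qquad f=\omega\circ h=(x_1{}^2,\ x_2{}^2,\ x_1x_2+x_1{}^3,\ x_1{}^3+x_2{}^3).
\]
A direct check shows $df_x$ has rank $2$ for every $x\neq 0$, so (3a) holds by Lemma \ref{lemKerdf}; for each $i_a\in G\setminus\{1\}$ the branch $B_{i_a}(h)$ lies in the fibre of the origin (the relevant odd parts of $h_3,h_4$ have $\{0\}$ as common zero locus), so $f$ is $\cA$-finite by Corollary \ref{corCharAFinRefMapPgeq2N}; and conditions (1) and (2) hold (each $Y\cap i_aY$ is contained in $\Fix i_a$). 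Yet the facet $C'=\{y_1=0,\ y_2\neq 0\}$ meets $\im h$ along $h(\{x_1=0\})$, is different from the facet $\{y_1=y_2=0\}$ through the origin, and satisfies $\im dh_0\cap C'^\bot=\langle e_1\rangle\neq 0$, because $h_3$ and $h_4$ have order two. So (3b) fails while (3a), (1), (2) and $\cA$-finiteness all hold.

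The example also shows why your contrapositive arc argument has no chance: the direction $u=\partial_{x_1}$ with $dh_0(u)\in C'^\bot$ is transverse to $h^{-1}(C')=\{x_1=0\}$, and along $h^{-1}(C')$ the quadratic term of $h_3$ restores immersivity of $f$; nothing forces non-immersive points to accumulate at the origin. The honest statement is therefore: $\cA$-finiteness is equivalent to (1)\,\&\,(2)\,\&\,(3a); condition (3b) is strictly stronger and should be recorded only as a convenient sufficient criterion via (3b)~$\Rightarrow$~(3a). Writing your proof that way --- proving the characterization with (3a) and the one implication you already have --- closes the argument; trying to prove the converse implication is not a matter of more work but of a false claim.
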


								\section{The double-point space $D^2(\omega)$ of the orbit map}\label{secD2Omega}
In dimensions $n<p<2n$, immersivity and normal crossings are not enough to determine stability and $\cA$-finiteness and some more sophisticated machinery needs to be introduced. For curvilinear maps $f\colon X\to Y$, stability is characterized by the condition of having smooth multiple-point schemes $D^k(f)$ of the \emph{expected dimension} $kn-(k-1)p$ (see \cite{MararMondCorank1} for this result and the definitions of the spaces $D^k(f)$ of corank 1 map-germs). Unfortunately, we are far from having an equivalent result in the presence of corank $\geq2$ points. For reflection maps, however, we only need to deal with double points, and we do it by means of a different double-point scheme denoted by $B^2(f)$. Before this, we introduce the double-point scheme $D^2(f)$, as defined in \cite{MondSomeRemarks}, and describe the scheme $D^2(\omega)$ of an orbit map.

Given $f\colon X\to Y$, let $U\subseteq X$ and $V\subseteq Y$ be coordinate neighborhoods with coordinates $x=x_1,\dots,x_n$ and $y=y_1,\dots,y_p$, respectively. Since the functions $f_j(x')-f_j(x),\, j=1,\dots,p$  vanish on $\{x=x'\}$, we can (after possibly shrinking $U$) find functions $\alpha_{ij}\in \cO_{U\times U}$, such that $f_j(x')-f_j(x)=\sum \alpha _{ji}(x,x')(x'_i-x_i)$. In other words, we can find a $p\times n$ matrix $\alpha$, with entries in $\cO_{U\times U}$, satisfying the equation
\begin{align}
f(x')-f(x)=\alpha(x,x')\cdot(x'-x). \tag{$\dagger$}\label{MatAlpha}
\end{align}

	\begin{definition}
Given $f\colon X\to Y$, the \emph{double-point ideal sheaf} $\sI^2(f)$ in $\cO_{X\times X}$ is, locally,
\[\sI^2(f)=(f\times f)^*\sI_{\Delta(Y,2)}+\langle n\times n \text{ minors of }\alpha\rangle,\]
where $\sI_{\Delta(Y,2)}$ is the ideal sheaf defining the diagonal of $Y$ and $\alpha$ is a matrix satisfying \eqref{MatAlpha}.
 It follows from Cramer's rule that, away from the diagonal, $\sI^2(f)$ is equal to $(f\times f)^*\sI_{\Delta(Y,2)}$.
The \emph{double-point space} $D^2(f)$ is the complex subspace of $X\times X$ cut out by $\sI^2(f)$.
	\end{definition}

We will make use of the following basic properties of $D^2(f)$, which can be found in \cite{Nuno-Ballesteros2015On-multiple-poi} or \cite{Penafort-Sanchis2015Multiple-point-}.
	\begin{prop}\label{PropD2(f)}
For any  $f\colon X\to Y$, with $n=\dim X$ and $p=\dim Y$, the following hold:

\begin{enumerate}
\item[(1)] \label{PropD2(f)item1}As a set, $D^2(f)$ consists of strict double points $(x,x')\in X\times X\text{, with }x'\neq x\text{ and }f(x')=f(x);$
and singular points $(x,x)\in \Delta X\text{, such that $f$ is singular at $x$}.$
\item[(2)] \label{PropD2(f)item2}If $D^2(f)$ is non-empty, then it has dimension $\geq 2n-p$. If $\dim D^2(f)=2n-p$, then $D^2(f)$ is a Cohen-Macaulay space.
\item[(3)] \label{PropD2(f)item3}If $f$ is stable, then $D^2(f)$ is the closure of the strict double points of $f$.
\end{enumerate}
	\end{prop}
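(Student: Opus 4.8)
The plan is to treat the three items separately: deriving the set-theoretic description (1) by a direct local computation, the dimension bound and Cohen--Macaulayness (2) from the determinantal structure of $\sI^2(f)$, and the statement for stable maps (3) by combining (2) with a dimension count on the singular locus.

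For (1), I would first identify the matrix $\alpha$ along the diagonal. Differentiating the defining identity \eqref{MatAlpha} with respect to the $x'$-variables and setting $x'=x$ gives $\alpha(x,x)=df_x$, the Jacobian matrix. Away from $\Delta X$ the ideal $\sI^2(f)$ is the pullback of the ideal of $\Delta(Y,2)$, whose zero locus is exactly the set of pairs with $f(x)=f(x')$; since there $x'\neq x$, these are the strict double points. Along $\Delta X$ the generators $f_j(x')-f_j(x)$ vanish identically, so membership in $D^2(f)$ is controlled by the $n\times n$ minors of $\alpha$, which evaluated on the diagonal are the $n\times n$ minors of $df_x$; these vanish precisely when $\rank df_x<n$, that is, when $f$ is singular at $x$. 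This yields the two families of points in (1), and shows in passing that any diagonal point arising as a limit of strict double points is forced to be singular.

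For (2), the lower bound $\dim D^2(f)\geq 2n-p$ is immediate off the diagonal: there $\sI^2(f)$ is generated by the $p$ functions $f_j(x')-f_j(x)$, so by Krull's Hauptidealsatz every component has codimension at most $p$ in the $2n$-dimensional space $X\times X$. The substance of (2) is to obtain the same bound uniformly, including components meeting or contained in $\Delta X$, together with the Cohen--Macaulay conclusion. For this I would use that, locally around diagonal points, $\sI^2(f)$ is a Fitting/determinantal ideal built from $\alpha$ in the sense of \cite{MondSomeRemarks}: the codimension estimate then follows from the general bound for determinantal ideals (Eagon--Northcott), and when the codimension is exactly the expected value $p$ --- equivalently $\dim D^2(f)=2n-p$ --- Macaulay's unmixedness theorem for determinantal ideals gives that $D^2(f)$ is Cohen--Macaulay. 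I expect this to be the main obstacle: one must present $\sI^2(f)$ near $\Delta X$ in a form whose \emph{expected} codimension equals $p$, rather than the naive $p-n+1$ coming from the minors of $\alpha$ alone, so that the determinantal machinery applies. Reconciling the $p$ difference equations with the minor ideal is the delicate bookkeeping here.

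For (3), suppose $f$ is stable and $D^2(f)\neq\emptyset$. Stability entails normal crossings, so the strict double-point locus is smooth of dimension $2n-p$; since the diagonal singular locus $\{x\mid \rank df_x<n\}$ has dimension $2n-p-1$, we get $\dim D^2(f)=2n-p$, and (2) applies to make $D^2(f)$ Cohen--Macaulay, hence pure-dimensional with no embedded components. Let $Z$ be the closure of the strict double points, a union of components of dimension $2n-p$. Any hypothetical component not contained in $Z$ would lie inside $\Delta X$, hence inside the singular locus; but that locus has dimension $2n-p-1<2n-p$ and so cannot support a component of dimension $2n-p$. Thus every component meets the off-diagonal part, and purity gives $D^2(f)=Z$ as sets. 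Finally, $D^2(f)$ is smooth, in particular reduced, off a set of dimension $2n-p-1$, so being Cohen--Macaulay it is reduced everywhere, upgrading the set-theoretic equality to the asserted scheme-theoretic one.
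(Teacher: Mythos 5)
The paper does not actually prove this proposition: it is imported from the literature (the space $D^2(f)$ is introduced ``as defined in \cite{MondSomeRemarks}'' and the proposition is stated without proof), so there is no in-paper argument to compare yours against. Judged on its own terms, your treatment of item (1) is correct and complete ($\alpha(x,x)=df_x$ by differentiating \eqref{MatAlpha}, and a strict double point forces $\alpha(x,x')$ to have nontrivial kernel, so the minors vanish there too), and your item (3) is a sound deduction from (1) and (2) once you add the standard fact that for a stable map $\Sigma^1(f)$ has the Thom--Boardman codimension $p-n+1$, which is what gives your dimension count $2n-p-1$ for the diagonal part.

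The genuine gap is in item (2), at exactly the place you flag. Near the diagonal the ideal $\sI^2(f)$ has $p+\binom{p}{n}$ listed generators, so Krull's theorem gives a useless codimension bound, while Eagon--Northcott applied to the $n\times n$ minors of the $p\times n$ matrix $\alpha$ alone bounds the codimension of the determinantal part by $p-n+1$, not $p$; there is no general principle that the sum of an ideal of height $\leq p$ and a determinantal ideal of height $\leq p-n+1$ has height $\leq p$, nor that such a sum is unmixed when the height is $p$. So the sentence ``the codimension estimate then follows from the general bound for determinantal ideals'' does not go through as written, and Macaulay/Eagon--Northcott unmixedness cannot be invoked until the ideal is exhibited as a genuinely determinantal (or resolution-theoretically controlled) ideal of expected codimension $p$. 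That presentation is the actual content of the cited reference: one either produces an explicit length-$p$ free resolution of $\cO_{D^2(f)}$ of Eagon--Northcott/Buchsbaum--Rim type built from $\alpha$ together with the difference functions, or one passes through the locally complete intersection model $B^2(f)\subseteq B^2(X)$ (cut out by the $p$ entries of $\alpha\cdot u$, as in Section \ref{secDoubleBlowingUp}) and transfers the conclusion back along $b$. Without one of these constructions, both the uniform bound $\dim\geq 2n-p$ at diagonal points and the Cohen--Macaulay claim remain unproved, and since your item (3) uses both (purity and the ``every component has dimension $\geq 2n-p$'' reading of (2)), it inherits this dependence.
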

								
	\begin{definition}
For any $g\in G\setminus\{1\}$, we write $D_g=\{(y,gy)\in V\times V\}.$
	\end{definition}
	
	 \begin{thm}\label{thmD2OrbitMap}
The double-point space $D^2(\omega)$ of the orbit map is a reduced complex space, with irreducible decomposition  \[D^2(\omega)=\bigcup_{g\in G\setminus\{1\}} D_g.\]
 \begin{proof}
The equality as sets follows from the first item of Proposition \ref{PropD2(f)}, applying Lemma \ref{KerDifOrbitMap},  and Theorem \ref{thmAlphaGeomInv}. Now it suffices to prove that $D^2(\omega)$ is reduced and, since it is a Cohen Macaulay space by  Proposition \ref{PropD2(f)}, it suffices to show that $D^2(\omega)$ is reduced at points $(y,y') \in D^2(\omega)$ with $y\in V\setminus \sA$. For such a point, $\omega$ is not singular at $y$ and thus $y'$ is not equal to $y$. Therefore, $\sI^2(f)$ is generated locally by the coordinate functions of $s(y,y')=\omega(y)-\omega(y')$. Since $ds_{(y,y')}(T_{(y,y')}V\times V)=d\omega_{y}(T_{y}V)+d\omega_{y'}(T_{y'}V)$ and $\omega$ is a submersion, the map $s$ is a submersion, and thus $D^2(\omega)$ is regular at $(y,y')$.  
 \end{proof}
	\end{thm}
	
	\begin{ex}
The double-point space of the orbit map of $Z_{m_1,\dots,m_p}$ (Example \ref{exOrbitMapZm1mp}) decomposes into the branches 
\[D_a=D_{i_a}=\{(y,y')\in\C^p\times\C^p\mid y'_j=e^{\frac{2\pi i}{m_j} a_j}y_j\}.\]  
The case of the Folded Hankerchief is depicted in Figure \ref{figD2FoldedHankerchief}.
\begin{figure}
\begin{center}
\includegraphics[scale=1.2
]{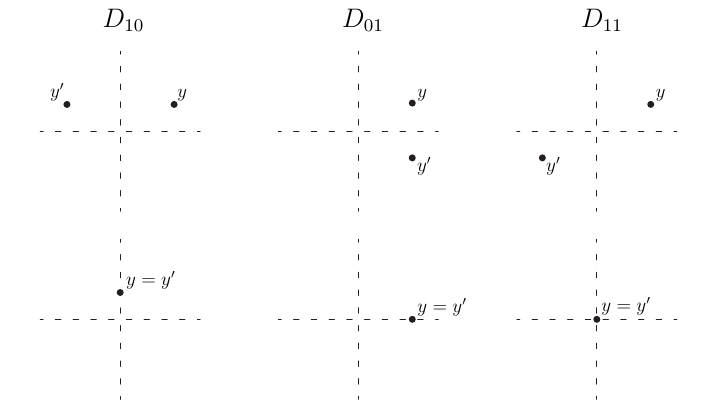}
\end{center}
\caption{Decomposition of $D^2(\omega)$ for the Folded Hankerchief. Strict double points are on the first row, singular ones on the second.}
\label{figD2FoldedHankerchief}
\end{figure}
	\end{ex}
	
Given a reflection map $f$, it is not clear how to obtain the description of the complex structure of the double-point space $D^2(f)$  from $D^2(\omega)$.  We overcome this problem with the introduction the double-point space $B^2(f)$.

				\section{The double point space $B_2(f)$ of a reflection map}\label{secDoubleBlowingUp}
Here we will compute the double-point space $B^2(f)$ of reflection maps. The space $B^2(f)$ of a map $f\colon X\to Y$ was first introduced by Ronga \cite{Ronga1972La-classe-duale}, and then studied  by Laksov \cite{Laksov77ResidualIntersectionsAndTodds}, Kleiman \cite{KleimanMultiplePointFormulasI} and others. Proofs of  the properties in this section can be obtained from these references, and found as stated here in \cite{Penafort-Sanchis2015Multiple-point-}. 

	\begin{notation*}
Given  a vector $g$, whose entries are sections in $\cO_Z$ for some complex space $Z$, we write $\langle g\rangle$ for the ideal sheaf generated by the entries of $g$. If the size of $g$ is $s$, we write $g\wedge u$ for the vector in $\cO_{Z\times {\bf P}^{s-1}}$ whose entries are $u_ig_j-u_jg_i$. 
	\end{notation*}

	\begin{rem}\label{remEqsInCovering}
Any subspace of $X\times {\bf P}^{s-1}$, is  covered by its intersection with the open affine sets 
\[U_i=\{(x,x',u)\in X\times {\bf P}^{s-1}\mid u_i\neq 0\}.\]
On $U_i$, the ideal sheaf $u\wedge g$ is generated by $s-1$ holomorphic functions $g_ju_i-g_iu_j,\, j\neq i.$
Observe also that, if $L$ is a homogeneous function vanishing on $u\in {\bf P}^{s-1}$, for some $(z,u)\in Z\times {\bf P}^{s-1}$, with $u\wedge g(z)=0$, then $L$ vanishes on $g(z)$ as well.
	\end{rem}

We will need the following properties about blowups along closed smooth subspaces.

	\begin{lem}\label{lemCompBlowup}
Let $Y$ ba a codimension $s$ closed smooth subspace of a complex manifold $Z$, whose ideal sheaf is generated by  $g=(g_1,\dots, g_s)$. The blowup of $Z$ along $Y$ is
\[Bl_YZ=\{(z,u)\in Z\times {\bf P}^{s-1}\mid u\wedge g=0\}.\] 
	\end{lem}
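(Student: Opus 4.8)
The plan is to verify that the space $B := \{(z,u) \in Z \times {\bf P}^{s-1} \mid u \wedge g = 0\}$, together with the projection $\rho\colon B \to Z$, $(z,u)\mapsto z$, satisfies the universal property that characterizes the blowup of $Z$ along $Y$: namely, $\rho$ should be terminal among morphisms $W \to Z$ whose pullback of the ideal sheaf $\sI = \langle g\rangle$ becomes locally principal and invertible. Passing through the universal property is cleaner here than working directly with the Rees algebra, because the invertibility computation localizes transparently on the charts $U_i$.

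First I would show that $\rho^*\sI\cdot\cO_B$ is invertible. Using the covering by the charts $U_i$ of Remark \ref{remEqsInCovering}, on $B \cap U_i$ we may normalize $u_i = 1$, so that the defining equations $g_j u_i - g_i u_j = 0$ read $g_j = u_j g_i$ for $j \neq i$. Hence the pullback $\langle g_1,\dots,g_s\rangle\cO_B$ restricts on this chart to the principal ideal $\langle g_i\rangle$. That $g_i$ is a non-zero-divisor, so the sheaf is genuinely invertible, follows from smoothness of $Y$: since $Y$ has codimension $s$ and is cut out by the $s$ functions $g_1,\dots,g_s$, these form a regular sequence, and in suitable local coordinates the situation reduces to the classical blowup of a manifold along the linear submanifold $\{g_1=\dots=g_s=0\}$.

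Next, the generators $g_1,\dots,g_s$ induce a surjection of graded $\cO_Z$-algebras $\cO_Z[u_1,\dots,u_s]\twoheadrightarrow \bigoplus_{d\geq 0}\sI^d$, $u_i\mapsto g_i$, whose relative $\mathrm{Proj}$ realizes $Bl_YZ$ as a closed subspace of $Z\times{\bf P}^{s-1}$. The relations $u_ig_j - u_jg_i$ visibly lie in the kernel, giving the closed inclusion $Bl_YZ\subseteq B$. By the universal property applied to $\rho$ (using the invertibility just established) there is a unique morphism $B\to Bl_YZ$ over $Z$, and by construction this is a two-sided inverse to the inclusion, yielding $B = Bl_YZ$ as spaces over $Z$ with the stated structure map.

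The main obstacle is the \emph{scheme-theoretic} equality $Bl_YZ = B$, as opposed to a mere set-theoretic one: one must know that the obvious relations $u_ig_j - u_jg_i$ generate the \emph{entire} kernel of $\cO_Z[u]\to\bigoplus_d\sI^d$, and not just a subscheme with the correct support. This is exactly where the smoothness hypothesis is essential, through the fact that the Rees algebra of an ideal generated by a regular sequence is the symmetric algebra modulo the Koszul relations $g_ie_j - g_je_i$. Equivalently, and this is the route I would actually carry out, one reduces via local coordinates to the blowup of affine space along a linear subspace, where the identification with $B$ is immediate from the chart computation in the first step.
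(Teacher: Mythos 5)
Your proof is correct, but there is nothing in the paper to compare it against: the author states Lemma \ref{lemCompBlowup} without proof, presenting it as a ``well known'' property of blowups along smooth centers and deferring to the references (Ronga, Laksov, Kleiman) and to \cite{Penafort-Sanchis2015Multiple-point-} for details. Your route --- verifying the universal property of the blowup for $B=\{u\wedge g=0\}$, with the scheme-theoretic identification resting on the fact that an ideal generated by a regular sequence is of linear type and its syzygies are the Koszul relations $g_ie_j-g_je_i$ --- is a standard and valid way to supply the missing argument. Two small points worth tightening if you write this out in full. First, the order of your steps has a mild circularity: to see that $g_i$ is a non-zero-divisor on the chart $B\cap U_i$ (hence that $\rho^*\sI\cdot\cO_B$ is invertible rather than merely locally principal) you already need to know that $B$ has no extra components over $Y$, which is exactly the content of the linear-type/local-coordinate computation; so that computation should come first, after which invertibility and the universal property are immediate. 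Second, the reduction to the blowup of affine space along a linear subspace requires a short verification that the locus $\{u\wedge g=0\}$ transforms correctly under a change of generators $g=Az'$ with $A$ invertible (via $u\mapsto A^{-1}u$ on the ${\bf P}^{s-1}$ factor); this is routine but should be recorded, since the lemma is stated for an arbitrary generating set $g$, not for coordinates.
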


	\begin{lem}\label{lemBlowupAsStrictTranf}
If $Y_1,Y_2\subseteq Z$ are closed subspaces, and no irreducible component of $Y_2$ is contained in $Y_1$, then $Bl_{Y_1\cap Y_2}Y_2$ can be realized as the strict transform of $Y_2$ in $Bl_{Y_1}Z$.
	\end{lem}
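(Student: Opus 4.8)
The statement to prove is Lemma~\ref{lemBlowupAsStrictTranf}: if $Y_1,Y_2\subseteq Z$ are closed subspaces with no irreducible component of $Y_2$ contained in $Y_1$, then $Bl_{Y_1\cap Y_2}Y_2$ can be realized as the strict transform of $Y_2$ inside $Bl_{Y_1}Z$.

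The plan is to work from the universal property of blowups together with the explicit projective description already available in the excerpt. First I would recall that the strict transform $\widetilde{Y_2}$ of $Y_2$ in $Bl_{Y_1}Z$ is by definition the closure, in $Bl_{Y_1}Z$, of the preimage $\pi^{-1}(Y_2\setminus Y_1)$ under the structure map $\pi\colon Bl_{Y_1}Z\to Z$; equivalently it is the scheme-theoretic closure of $Y_2\setminus(Y_1\cap Y_2)$. The key observation making this well-behaved is the hypothesis that no component of $Y_2$ lies in $Y_1$, which guarantees that $Y_2\setminus Y_1$ is dense in $Y_2$, so the strict transform genuinely dominates $Y_2$ and is not supported over the exceptional locus.

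Next I would invoke the universal property: $\pi\colon Bl_{Y_1}Z\to Z$ is the terminal object among morphisms $T\to Z$ for which the pullback ideal sheaf $\pi^{-1}\sI_{Y_1}\cdot\cO_T$ is invertible (locally principal). Restricting attention to the closed subspace $Y_2$, the composite $\widetilde{Y_2}\hookrightarrow Bl_{Y_1}Z\to Z$ factors through $Y_2$, and the pullback of $\sI_{Y_1}$ to $\widetilde{Y_2}$ is invertible because it is the restriction of the invertible ideal on $Bl_{Y_1}Z$. But the pullback of $\sI_{Y_1}$ to $Y_2$ is exactly the pullback of $\sI_{Y_1\cap Y_2}=\sI_{Y_1}\cdot\cO_{Y_2}$, the ideal of the intersection inside $Y_2$. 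Thus $\widetilde{Y_2}\to Y_2$ is a morphism making $\sI_{Y_1\cap Y_2}$ invertible, so by the universal property of $Bl_{Y_1\cap Y_2}Y_2$ there is a canonical morphism $\widetilde{Y_2}\to Bl_{Y_1\cap Y_2}Y_2$ over $Y_2$. To get the reverse comparison and conclude it is an isomorphism, I would use that $\widetilde{Y_2}$ is itself a blowup: since $Y_1$ is locally cut out by $g=(g_1,\dots,g_s)$ (Lemma~\ref{lemCompBlowup}), both $Bl_{Y_1\cap Y_2}Y_2$ and $\widetilde{Y_2}$ sit inside $Y_2\times\mathbf{P}^{s-1}$ as the closures of the locus where $u\wedge g=0$ away from the vanishing of $g$, and both are the strict transform in the sense of closing up $Y_2\setminus Y_1$; comparing their defining ideals on each affine chart $U_i$ shows they agree.

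Concretely, the cleanest route is the chart computation using Remark~\ref{remEqsInCovering}. On $Y_2\times\mathbf{P}^{s-1}$ the total transform is cut out by $\langle u\wedge (g|_{Y_2})\rangle$, and both the strict transform of $Y_2$ and $Bl_{Y_1\cap Y_2}Y_2$ are obtained from this by saturating with respect to (removing the components supported over) the exceptional divisor $\{g|_{Y_2}=0\}=Y_1\cap Y_2$. On the affine piece $U_i$ the generators become $g_j u_i - g_i u_j$, and saturation amounts to the same ideal-quotient operation in both descriptions, so the two subspaces coincide scheme-theoretically. I expect the main obstacle to be the scheme-theoretic bookkeeping at points of $Y_1\cap Y_2$: one must check that the closure (strict transform) and the $\mathrm{Proj}$ of the Rees algebra of $\sI_{Y_1\cap Y_2}$ define the same ideal, and this is exactly where the hypothesis that no component of $Y_2$ lies in $Y_1$ is essential, since it ensures that $\sI_{Y_1\cap Y_2}$ contains a nonzerodivisor locally on each component and hence that the strict transform is the blowup rather than some larger total-transform component. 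Once that density/nonzerodivisor point is in place, the identification of ideals chart by chart is routine.
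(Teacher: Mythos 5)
The paper does not actually prove this lemma: it is listed among the ``well known properties about blowups'' and stated without proof, with the reader implicitly referred to the literature. Your argument is the standard one (essentially the proof of the corresponding statement in Hartshorne, Corollary II.7.15, transposed to complex spaces) and it is sound. The only step you should make explicit rather than assert is the claim that the pullback of $\sI_{Y_1}$ to $\widetilde{Y_2}$ is invertible \emph{because} it is the restriction of an invertible ideal: restricting a locally principal ideal to a closed subspace yields a locally principal ideal generated by a nonzerodivisor only when no associated component of that subspace lies inside the exceptional divisor. For the strict transform this holds precisely because it is the scheme-theoretic closure of $\pi^{-1}(Y_2\setminus Y_1)$, so $\cO_{\widetilde{Y_2}}$ has no sections supported over $Y_1$ --- and this is also where the hypothesis that no component of $Y_2$ is contained in $Y_1$ enters, as you correctly note at the end. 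With that point spelled out, the universal-property map $\widetilde{Y_2}\to Bl_{Y_1\cap Y_2}Y_2$ together with your chart-by-chart identification of the two saturated ideals in $Y_2\times {\bf P}^{s-1}$ completes the proof; in particular your observation that both objects are the saturation of $\langle u\wedge g\rangle$ with respect to the exceptional locus is the right way to close the argument, since the restricted generators $g|_{Y_2}$ need not form a regular sequence and the total transform may be strictly larger.
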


	\begin{definition}
Given a complex manifold $X$, we write $b\colon B^2(X)\to X\times X$
 for the blowup of $X\times X$ along $\Delta X$. We write the exceptional divisor as $E=b^{-1}(\Delta X).$ In particular, \[B^2(\C^p)=\{(x,x',u)\in \C^p\times\C^p\times{\bf P}^{n-1}\mid u\wedge(x'-x)=0\}.\]

	\end{definition}

\begin{definition}
Given $f\colon X\to Y$, the ideal sheaf $\sH^2(f)$ in $\cO_{B^2(X)}$ is defined, in a small coordinate neighborhood, as 
  \[\sH^2(f)=\langle \alpha(x,x') u\rangle.\]
for any matrix $\alpha$ satisfying equation \eqref{MatAlpha}. The \emph{double-point space} $B^2(f)$ is the subspace of $B^2(X)$ cut out by $\sH^2(f)$.
\end{definition}

	\begin{prop}\label{propOfB2}
For any  $f\colon X\to Y$, with $n=\dim X$ and $p=\dim Y$, the following hold:
\begin{enumerate}
\item \label{propOfB2Item1} As a set, $B^2(f)$ consists of strict double points $z=b^{-1}(x,x')\text{, for some }(x,x')\in X\times X\text{, with }x'\neq x\text{ and }f(x')=f(x)$; and singular points $(x,x,u)\in E\text{, such that }u\in {\bf P}(\ker df_x).$
\item \label{propOfB2Item2} If $B^2(f)$ is non-empty, then it has dimension $\geq 2n-p$. If $\dim B^2(f)=2n-p$, then $B^2(f)$ is locally a complete intersection.
\item \label{propOfB2Item3}The structure map $B^2(X)\to X\times X$ restricts to a morphism of complex spaces 
$b\colon B^2(f)\to D^2(f).$
 This map is proper and surjective, and it is an isomorphism away from the preimage of
$\{(x,x)\in X\times X\mid \corank f_x\geq 2\}.$
\item \label{propOfB2Item4}If $f$ is stable, then $b\colon B^2(f)\to D^2(f)$ is a resolution, and $B^2(f)$ is the closure of its strict double points. In particular, $B^2(f)$ is smooth of dimension $2n-p$.
\end{enumerate}
\end{prop}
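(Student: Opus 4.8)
The plan is to argue in local coordinates and to treat the four items in order, each using the previous ones. The computation underlying everything is that, differentiating the identity $f(x')-f(x)=\alpha(x,x')(x'-x)$ and restricting to the diagonal, one obtains $\alpha(x,x)=df_x$; hence on the exceptional divisor $E=\{(x,x,u)\}$ the local generators $\alpha(x,x')u$ of $\sH^2(f)$ specialise to $df_x\cdot u$. For item (\ref{propOfB2Item1}), away from $E$ the map $b$ is an isomorphism and $\sH^2(f)=b^*\sI^2(f)$, so $B^2(f)\setminus E$ is identified with $D^2(f)\setminus\Delta X$, the strict double points (Proposition \ref{PropD2(f)}); over $E$ the equation $df_x\cdot u=0$ says exactly that $u\in\ker df_x$, giving the singular points.

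For item (\ref{propOfB2Item2}), note that $B^2(X)$ is smooth of dimension $2n$, being the blowup of the manifold $X\times X$ along the smooth centre $\Delta X$. Away from $E$ the ideal $\sH^2(f)$ is generated by the $p$ functions $f_j(x')-f_j(x)$, and near $E$ by the $p$ entries of $\alpha(x,x')u$; in either case $B^2(f)$ is cut out by $p$ equations in a smooth $2n$-fold, so $\dim B^2(f)\geq 2n-p$, with equality forcing a local complete intersection. For item (\ref{propOfB2Item3}) I first check the inclusion $b^*\sI^2(f)\subseteq\sH^2(f)$, so that $b$ restricts to $B^2(f)\to D^2(f)$: on the chart $u_i\neq 0$ one has $x'-x=(x'_i-x_i)u$, whence $b^*\big(f_j(x')-f_j(x)\big)=(x'_i-x_i)(\alpha u)_j\in\langle\alpha u\rangle$, and since modulo $\langle\alpha u\rangle$ the $i$-th column of $\alpha$ is a combination of the others, every $n\times n$ minor of $\alpha$ lies there too. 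Properness is inherited from the projective morphism $b\colon B^2(X)\to X\times X$, surjectivity is the set description of item (\ref{propOfB2Item1}), and $b$ is an isomorphism over strict double points. Over a diagonal point with $\corank f_x=1$ the fibre is the single point ${\bf P}(\ker df_x)$, whereas for $\corank f_x\geq 2$ it is the positive-dimensional ${\bf P}(\ker df_x)$; identifying the restriction over the corank-$\leq 1$ locus with the isomorphism $B^2(f)\cong D^2(f)$ known for curvilinear maps yields the stated isomorphism.

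For item (\ref{propOfB2Item4}), if $f$ is stable then $D^2(f)$ is the closure of its strict double points and has dimension $2n-p$ (Proposition \ref{PropD2(f)}), so item (\ref{propOfB2Item2}) makes $B^2(f)$ a local complete intersection of that dimension; its smoothness, and hence the fact that $b$ is a resolution, follows from the local normal forms of stable map-germs, for which $B^2$ can be computed directly. The routine part is items (\ref{propOfB2Item1}) and (\ref{propOfB2Item2}), which are immediate once $\alpha(x,x)=df_x$ is available. I expect the genuine obstacle to be the scheme-theoretic (not merely bijective) statements: that $b$ is an isomorphism over the corank-$\leq 1$ locus, which rests on the comparison $B^2(f)\cong D^2(f)$ for curvilinear maps, and that $B^2(f)$ is smooth for stable $f$, which requires the local classification of stable singularities rather than the blowup formalism alone.
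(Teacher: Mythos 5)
The paper does not actually prove Proposition \ref{propOfB2}: it states the properties and refers to Ronga, Laksov, Kleiman and \cite{Penafort-Sanchis2015Multiple-point-} for proofs, so there is no in-text argument to compare against line by line. Measured against the standard arguments those references contain, your reconstruction is essentially correct. The identity $\alpha(x,x)=df_x$ (obtained by differentiating $f(x')-f(x)=\alpha(x,x')(x'-x)$ in $x'$ and restricting to the diagonal) correctly drives item (\ref{propOfB2Item1}); the count of $p$ local generators in the smooth $2n$-fold $B^2(X)$ gives item (\ref{propOfB2Item2}); and your verification that $b^*\sI^2(f)\subseteq\sH^2(f)$ — writing $x'-x=(x_i'-x_i)u$ on the chart $u_i\neq 0$ and observing that modulo $\langle\alpha u\rangle$ the $i$-th column of $\alpha$ is a combination of the others, killing all $n\times n$ minors — is exactly the right computation for item (\ref{propOfB2Item3}). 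You are also right to identify the two genuinely nontrivial points: the scheme-theoretic (not merely bijective) isomorphism over the corank $\leq 1$ locus, which rests on the divided-difference comparison of $B^2$ and $D^2$ for curvilinear germs, and smoothness of $B^2(f)$ for stable $f$. You cite these rather than prove them, which is precisely what the paper does.

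One small gap in item (\ref{propOfB2Item4}): you invoke item (\ref{propOfB2Item2}) to conclude that $B^2(f)$ is a local complete intersection of dimension $2n-p$, but surjectivity of $b$ onto the $(2n-p)$-dimensional $D^2(f)$ does not by itself bound $\dim B^2(f)$, since the fibres over diagonal points of corank $k$ are projective spaces ${\bf P}^{k-1}$ of positive dimension when $k\geq 2$. One must check that the preimage in $B^2(f)$ of the corank-$k$ diagonal stratum has dimension $n-k(p-n+k)+(k-1)$, which is strictly less than $2n-p$ for all $k\geq 1$ when $f$ is stable (Thom--Boardman codimensions); this both fixes the dimension at $2n-p$ and, combined with equidimensionality of local complete intersections, shows no component lies over the diagonal, so that $B^2(f)$ is indeed the closure of its strict double points. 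With that supplement your argument is complete modulo the two cited facts.
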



				\subsection*{The double-point space $B^2(\omega)$ of the orbit map}\label{secB2Omega}
We obtain a decomposition of $B^2(\omega)$, indexed by the non-trivial elements $g\in G\setminus\{1\}$, analogous to the one for $D^2(\omega)$ in Section \ref{secD2Omega}. 
	\begin{definition}
For any element $g\neq 1$ of a reflection group $G$, we write $b_g\colon B_g\to D_g$
 for the blowup of $D_g$ along $D_g\cap \Delta V=\{(y,y)\in V\times V\mid y\in \Fix g\} .$
	\end{definition}

	\begin{prop}
 \begin{enumerate}
\item \label{propB_gItem1}For all $g\in G\setminus\{1\}$, the space $B_g$ is smooth.

\item \label{propB_gItem2}The structure map $b_g\colon B_g\to D_g$ is an isomorphism if and only if $g$ is a reflection. 	
\item \label{BgStrictTransDg}
$B_g$ embeds in $B^2(\C^p)$ as the strict transform of $D_g$, which is precisely the set of points $(y,gy,[y-gy])\in B_2(V), y\notin \Fix  g$ and points $(y,y,u)\in B^2(V),$ with $y\in \Fix  g$ and  $u\in {\bf P}((\Fix g)^\bot)$.
\end{enumerate}\label{propB_g}
\begin{proof}Item \ref{propB_gItem1} follows because $B_g$ is the blowup of a smooth space along a smooth closed subspace. Item \ref{propB_gItem2} follows  from the universal property of the blowup, because reflections are, by definition, the only finite order linear transformations $g$, such that $D_g\cap \Delta V$  is a hyperplane (and thus a Cartier divisor). Item \ref{BgStrictTransDg} follows from Lemma \ref{lemBlowupAsStrictTranf}, because $D_g$ is not contained in $\Delta V$.
\end{proof}
	\end{prop}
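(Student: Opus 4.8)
The plan is to reduce all three statements to the elementary geometry of blowing up a linear subspace inside affine space. Since $g$ is linear, the branch $D_g=\{(y,gy)\}$ is the graph of $g$, so the projection $(y,gy)\mapsto y$ is a biholomorphism $D_g\cong\C^p$ carrying the centre $D_g\cap\Delta X=\{(y,y)\mid y\in\Fix g\}$ to the linear subspace $\Fix g\subseteq\C^p$. Thus $b_g$ is, up to this isomorphism, the blowup of $\C^p$ along $\Fix g$, and item (1) is immediate: the blowup of a regular space along a smooth closed centre is regular, so $B_g$ is smooth. For item (2) I would invoke the universal property of the blowup, namely that $b_g$ is an isomorphism precisely when the centre is a Cartier divisor, i.e.\ when $\Fix g$ has codimension one in $D_g\cong\C^p$. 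As $g$ lies in the finite unitary group $G$, it is automatically of finite order and unitary, so the sole remaining condition for $g$ to be a reflection is $\dim\Fix g=p-1$; this is exactly the codimension-one condition, and the equivalence follows.

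For item (3) I would apply Lemma~\ref{lemBlowupAsStrictTranf} with $Z=\C^p\times\C^p$, $Y_1=\Delta\C^p$ and $Y_2=D_g$. Since $D_g$ is irreducible (being a graph) and $D_g\subseteq\Delta$ would force $g=1$, no component of $D_g$ lies in $\Delta$, so the lemma realizes $B_g=Bl_{D_g\cap\Delta}D_g$ as the strict transform of $D_g$ inside $Bl_{\Delta\C^p}(\C^p\times\C^p)=B^2(\C^p)$. Over a point $(y,gy)\notin\Delta$, that is $y\notin\Fix g$, the structure map $b\colon B^2(\C^p)\to\C^p\times\C^p$ has the single preimage $(y,gy,[y-gy])$ forced by $u\wedge(gy-y)=0$, so the strict transform is the closure of these points. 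Approaching $y_0\in\Fix g$ along $y=y_0+tv$ gives $[y-gy]=[(1-g)(y_0+tv)]=[(1-g)v]$, using $(1-g)y_0=0$, whence the exceptional directions over $(y_0,y_0)$ are ${\bf P}(\im(1-g))$.

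The crux, and essentially the only nonformal step, is identifying $\im(1-g)$. Here I would use that $g$ is unitary: $\C^p$ splits orthogonally as $\Fix g\oplus(\Fix g)^\bot$, with $g$ the identity on $\Fix g$ and having no eigenvalue $1$ on $(\Fix g)^\bot$, so $1-g$ vanishes on $\Fix g$ and restricts to an automorphism of $(\Fix g)^\bot$. Therefore $\im(1-g)=(\Fix g)^\bot$, and the exceptional points of $B_g$ over $(y_0,y_0)$ are precisely $(y_0,y_0,u)$ with $u\in{\bf P}((\Fix g)^\bot)$, as claimed; the abstract blowup structure guarantees that the full projective space is attained. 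I expect this unitarity computation to be the main obstacle, the remainder being formal blowup bookkeeping.
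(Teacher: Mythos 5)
Your proof is correct and follows essentially the same route as the paper: item (1) by smoothness of blowups along smooth centres, item (2) by the universal property and the Cartier-divisor criterion, and item (3) by Lemma \ref{lemBlowupAsStrictTranf} applied to $Y_1=\Delta\C^p$, $Y_2=D_g$. The only difference is that you additionally carry out the explicit identification of the exceptional fibre as ${\bf P}(\im(1-g))={\bf P}((\Fix g)^\bot)$ via unitarity of $g$, a pointwise description the paper asserts in the statement but leaves implicit in its proof.
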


	\begin{definition}
We write $\sH_g$ for the ideal sheaf defining $B_g$ in $B^2(V)$.
	\end{definition}

The following result is a direct consequence Lemma \ref{lemCompBlowup} and Remark \ref{remEqsInCovering}.
	\begin{prop}\label{propGeneratorsBg}
Let $L_1,\dots,L_p$ be independent linear forms $V\to \C$ such that 
$\Fix g=\{y\in V\mid L_{1}(y)=\dots=L_r(y)=0\}.$
The ideal sheaf $\sH_g$ is generated by  $L_1(gy-y'),\dots,L_r(gy-y'),$ and $L_{r+1}(v),\dots,L_n(v).$ The space $B_g$ is cut out by the equations \[y'=gy,\quad L_{r+1}(v)=\dots=L_n(v)=0.\]
	\end{prop}
		

	\begin{ex}
For the group $Z_{2,2}$, the spaces $B_g$  (see Figure \ref{figB2FoldedHankerchief}) are 
\begin{itemize}

\item[]$B_{1,0}=\{((y_1,y_2),(-y_1,y_2),(1:0))\mid (y_1,y_2)\in \C^2\},$
\item[]$B_{0,1}=\{((y_1,y_2),(y_1,-y_2),(0:1))\mid (y_1,y_2)\in \C^2\},$
\item[]$B_{1,1}=\{((y_1,y_2),(-y_1,-y_2),(v_1:v_2))\mid y_1v_2=y_2v_1,\, (y_1,y_2)\in \C^2\}.$
\end{itemize}

\begin{figure}
\begin{center}
\includegraphics[scale=1.2
]{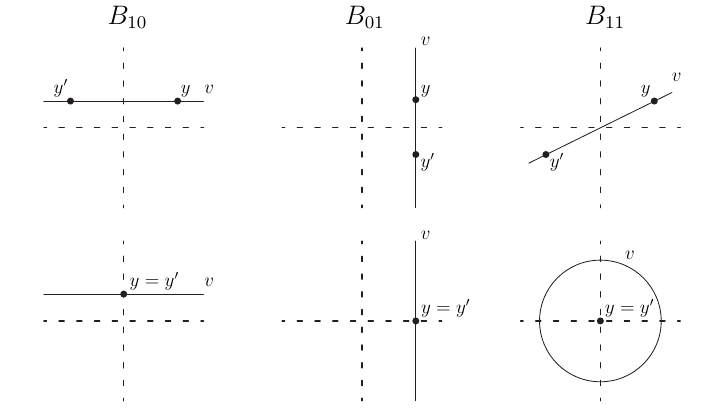}
\end{center}
\caption{Decomposition of $B^2(\omega)$ for the Folded Hankerchief. Observe that $B^2(\omega)$ and $D^2(\omega)$ (Figure \ref{figD2FoldedHankerchief}) fail to be isomorphic precisely at diagonal points of corank $\geq 2$ (see item \ref{propOfB2Item3} of Proposition \ref{propOfB2}).}
\label{figB2FoldedHankerchief}
\end{figure}
	\end{ex}
	
	 \begin{thm}\label{thmB2Omega}
The double-point space $B^2(\omega)$ of the orbit map is a reduced complex space, with irreducible decomposition
 \[B^2(\omega)=\bigcup_{g\in G\setminus\{1\}} B_g.\]
 \begin{proof}We show the equality as sets first: Let $X=\bigcup _{g\in G\setminus\{1\}}B_g$. From Lemma \ref{thmAlphaGeomInv} follows that $B^2(\omega)$ and $X$ coincide away from the exceptional divisor. Thus, $X\subseteq B^2(\omega)$ follows from Lemma \ref{BgStrictTransDg}, because $X$ is a strict transform. For the other inclusion, we need to show that for every vector $v$ in the kernel of $d\omega_x$, there exists an element $g$ of the group which fixes $x$ and such that $v$ is normal to $\Fix g$. This follows directly from Proposition \ref{propElementFixesFacet} and Lemma \ref{KerDifOrbitMap}.

 Now  it suffices to prove that $B^2(\omega)$ is reduced. The equality as sets also implies that the dimension of $B^2(\omega)$ is $p$, and thus the space is a locally complete intersection by  item \ref{propOfB2Item2} of Proposition \ref{propOfB2}. Hence, it suffices to show that $B^2(\omega)$ is generically reduced. In particular, it suffices to show that $B^2(\omega)$ is reduced at all points $(x,x',v) \in B^2(\omega)$ with $x\in V\setminus\sA$. At such a point $B^2(f)$ and $D^2(f)$ are locally isomorphic, and thus the claim follows from Theorem \ref{thmD2OrbitMap}.  
 \end{proof}
	\end{thm}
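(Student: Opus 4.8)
The plan is to prove Theorem \ref{thmB2Omega} following the exact same two-stage strategy that worked for the analogous statement about $D^2(\omega)$ in Theorem \ref{thmD2OrbitMap}: first establish the decomposition set-theoretically, then upgrade to a scheme-theoretic (reduced) statement by a dimension/Cohen-Macaulay argument combined with generic reducedness. The advantage here, compared to $D^2$, is that $B^2(\omega)$ is a local complete intersection as soon as it has the expected dimension (Proposition \ref{propOfB2}, item \ref{propOfB2Item2}), which gives the unmixedness needed to pass from generic reducedness to reducedness.

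For the set-theoretic equality $B^2(\omega)=\bigcup_{g\neq 1}B_g$, I would argue both inclusions. Write $X=\bigcup_{g\neq 1}B_g$. Away from the exceptional divisor $E$, the points of $B^2(\omega)$ are strict double points $b^{-1}(y,y')$ with $\omega(y)=\omega(y')$, $y\neq y'$; by Theorem \ref{thmAlphaGeomInv} these are exactly the pairs with $y'=gy$ for some $g\in G\setminus\{1\}$, so $B^2(\omega)$ and $X$ agree there, and each such point lies on the strict transform of the corresponding $D_g$, which by Proposition \ref{propB_g}(\ref{BgStrictTransDg}) is $B_g$. The inclusion $X\subseteq B^2(\omega)$ then follows because each $B_g$ is a strict transform of $D_g\subseteq D^2(\omega)$ and strict transforms land inside the total transform $B^2(\omega)$ (again via Lemma \ref{BgStrictTransDg}). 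The reverse inclusion $B^2(\omega)\subseteq X$ is where the real content sits, and it concerns the exceptional points $(x,x,u)\in E$: by Proposition \ref{propOfB2}(\ref{propOfB2Item1}) these are exactly the $(x,x,u)$ with $u\in\ker d\omega_x$. I must show each such point lies on some $B_g$. By Lemma \ref{KerDifOrbitMap}, $\ker d\omega_x=C^\bot$ where $C\in\sC$ is the facet through $x$, so $u\in C^\bot=(\langle C\rangle)^\bot$. By Proposition \ref{propElementFixesFacet} there is $g\in G$ with $\Fix\,g=\langle C\rangle$; since $x\in C\subseteq\Fix\,g$ and $u\in(\Fix\,g)^\bot$, the description of $B_g$ on the exceptional divisor in Proposition \ref{propB_g}(\ref{BgStrictTransDg}) shows exactly that $(x,x,u)\in B_g$. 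This closes the set-theoretic equality.

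The second stage is to show $B^2(\omega)$ is reduced. The set-theoretic equality forces $\dim B^2(\omega)=n$ (each $B_g$ is the blowup of the $n$-dimensional $D_g$, hence $n$-dimensional, and $2n-p=n$ here since the source and target both have dimension $p$, i.e. $n=p$), which is the expected dimension; therefore $B^2(\omega)$ is a local complete intersection by Proposition \ref{propOfB2}(\ref{propOfB2Item2}), in particular Cohen-Macaulay and hence without embedded components. For such a space it suffices to verify reducedness at a generic point of each component, so I would check reducedness at points $(x,x',v)\in B^2(\omega)$ with $x\in\C^p\setminus\sA$. At such a point $\omega$ is a local biholomorphism, so $x'\neq x$ and the blowup structure map is a local isomorphism onto $D^2(\omega)$ near $(x,x')$; thus $B^2(\omega)$ and $D^2(\omega)$ are locally isomorphic there, and reducedness follows from the already-proved reducedness of $D^2(\omega)$ in Theorem \ref{thmD2OrbitMap}.

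I expect the main obstacle to be the reverse set-theoretic inclusion over the exceptional divisor, since it is the step that genuinely uses the structure of the reflection group — specifically the pairing of Lemma \ref{KerDifOrbitMap} (identifying the kernel with $C^\bot$) with Proposition \ref{propElementFixesFacet} (producing a group element fixing $\langle C\rangle$). One must be careful that the element $g$ supplied by Proposition \ref{propElementFixesFacet} really has $\Fix\,g=\langle C\rangle$ exactly (not merely containing $C$), so that $u\in C^\bot$ is genuinely normal to $\Fix\,g$ and the exceptional-divisor description of $B_g$ applies verbatim; this is the delicate point, and it is precisely the place where the analogous argument for $D^2(\omega)$ was trivial but here requires the blowup geometry.
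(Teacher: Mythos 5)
Your proposal is correct and follows essentially the same two-stage argument as the paper: the set-theoretic equality over the exceptional divisor via Lemma \ref{KerDifOrbitMap} combined with Proposition \ref{propElementFixesFacet}, and then reducedness from the expected dimension, the local complete intersection property of Proposition \ref{propOfB2}, and the local isomorphism with $D^2(\omega)$ away from $\sA$. The point you flag as delicate is handled exactly as you suggest, since Proposition \ref{propElementFixesFacet} does supply $g$ with $\Fix\,g=\langle C\rangle$ on the nose.
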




	\subsection*{From $B^2(\omega)$ to the double points of any reflection map}\label{secB2RefMaps}
In this section we show how to compute the double-point space $B^2(f)$ of a reflection map from the space $B^2(\omega)$ of the orbit map. We will need the following easy lemma:
	\begin{lem}
 Every embedding $h\colon X\hookrightarrow Y$ induces an embedding $\widetilde h\colon B^2(X)\hookrightarrow B^2(Y)$ given as follows: Away from the exceptional divisor $E_X$, set $\widetilde h=b_Y{}^{-1}\circ(h\times h)\circ b_X,$
  where $b_X$ and $b_Y$ are the structure maps of the corresponding blowups. Locally around $E_X$, $\widetilde h$ is given by 
  \[(x,x,u)\mapsto (h(x),h(x),\alpha(x,x')\cdot u),\]
 for any matrix $\alpha$ satisfying \eqref{MatAlpha} for $h$. 
	\end{lem}

	\begin{prop}
Let $f=F\circ h\colon X\to Z$, for some embedding $h\colon X\hookrightarrow Y$ and some map $F\colon Y\to Z$. Then $B^2(f)=\widetilde h^{-1}(B^2(F)).$
\begin{proof}Away from the diagonal the statement is obvious. Around diagonal points, let $h(x)-h(x')=\alpha(x,x')(x-x')$ and $F(y)-F(y')=\beta(y,y')(y-y')$. Observe that the construction satisfies the chain rule $f(x)-f(x')=\beta(h(x),h(x'))\cdot(\alpha(x,x')(x-x')).$
Since $\widetilde h$ is given by $(x,x',u)\mapsto (h(x),h(x'),\alpha(x-x')\cdot u)$, the equality $\sH^2(f)=\widetilde h^*(\sH^2(F))$ follows by construction.
 \end{proof}
	\end{prop}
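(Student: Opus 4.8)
The plan is to recast the asserted equality of subspaces as an equality of ideal sheaves and then to check it on the two natural charts of $B^2(X)$. Since $\widetilde h$ is an embedding, the subspace $\widetilde h^{-1}(B^2(F))$ is cut out by the pullback ideal sheaf $\widetilde h^*\sH^2(F)$, so it is enough to prove
\[\sH^2(f)=\widetilde h^*\sH^2(F)\]
in $\cO_{B^2(X)}$. Both $\sH^2(f)$ and the map $\widetilde h$ were defined through two compatible formulas, one away from the exceptional divisor $E_X$ and one locally around it, so I would verify the identity separately on $B^2(X)\setminus E_X$ and on a neighbourhood of $E_X$, and then record that the two computations agree on the overlap.

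Away from $E_X$ the structure map $b_X$ is an isomorphism onto $X\times X\setminus\Delta X$, and under this identification $\widetilde h$ becomes $h\times h$. Since off the diagonal $\sH^2(f)=b_X^*(f\times f)^*\sI_{\Delta(Z,2)}$ and likewise for $F$, the claim reduces to the functoriality $(f\times f)^*=(h\times h)^*(F\times F)^*$ of the pullback of the diagonal ideal, which holds simply because $f=F\circ h$. This is the easy case.

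The content lies near $E_X$. On such a chart $\sH^2(f)$ is generated by the entries of $M\cdot u$ for any matrix $M$ satisfying \eqref{MatAlpha} for $f$, while $\sH^2(F)$ is generated by the entries of $\beta\cdot w$, where $\beta$ satisfies \eqref{MatAlpha} for $F$ and $w$ is the projective coordinate on $B^2(Y)$. Writing $\alpha$ for a matrix realizing \eqref{MatAlpha} for $h$, the map $\widetilde h$ sends $(x,x',u)\mapsto(h(x),h(x'),\alpha(x,x')\cdot u)$, so that
\[\widetilde h^*(\beta\cdot w)=\bigl(\beta(h(x),h(x'))\cdot\alpha(x,x')\bigr)\cdot u.\]
The crucial identity is the chain rule obtained by substituting $h(x')-h(x)=\alpha(x,x')(x'-x)$ into $F(y')-F(y)=\beta(y,y')(y'-y)$, namely
\[f(x')-f(x)=\beta(h(x),h(x'))\cdot\bigl(\alpha(x,x')(x'-x)\bigr),\]
which exhibits $\beta(h(x),h(x'))\cdot\alpha(x,x')$ as an admissible matrix $M$ for $f$. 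Invoking the independence of $\sH^2$ from the chosen matrix, the generators of $\widetilde h^*\sH^2(F)$ are a generating set of $\sH^2(f)$, which settles the near-diagonal case.

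The step I expect to be the main obstacle is the careful bookkeeping at the exceptional divisor. One must check that $\alpha(x,x)=dh_x$ is injective, so that $\alpha(x,x')\cdot u$ is a genuine point of the target projective space, which is exactly where $h$ being an embedding is used; that the matrix sizes compose so that $\beta\cdot\alpha$ has the shape of an admissible matrix for $f$; and that the two local computations patch along the overlap. The patching is forced by construction, since off the diagonal but near $E_X$ one has $u=[x'-x]$ and then $\alpha(x,x')\cdot u$ recovers the direction of $f(x')-f(x)$, so the near-diagonal generators specialise to the off-diagonal ones. Once the chain rule is established, the equality of the two ideal sheaves is immediate.
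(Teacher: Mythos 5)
Your proof is correct and follows essentially the same route as the paper's: both reduce the claim to the chain-rule identity $f(x')-f(x)=\beta(h(x),h(x'))\cdot\bigl(\alpha(x,x')(x'-x)\bigr)$, which exhibits $\beta(h(x),h(x'))\cdot\alpha(x,x')$ as an admissible matrix for $f$, so that the pullback of the generators of $\sH^2(F)$ along $\widetilde h$ generates $\sH^2(f)$. Your version merely spells out the off-diagonal functoriality and the well-definedness of $\sH^2$ with respect to the chosen matrix, which the paper leaves implicit.
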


	\begin{definition}
For any $g\in G\setminus\{1\}$ and any embedding $h\colon X\to V$, we write $B_g(h)=\widetilde h^{-1}(B_g)$ and $\sH_g(h)=(\widetilde h)^*\sH_g.$
	\end{definition}

	\begin{cor}\label{corDoublePointRefMapDecomp}
The double points of any reflection map $f$ decompose as 
\[B^2(f)=\bigcup_{g\in G\setminus\{1\}}B_g(h).\]
	\end{cor}

	\begin{ex}\label{exCompuBg(h)}
Let $h\colon \C^n\to V$ be an injective linear map, and let $L_1,\dots,L_p$ be independent linear forms on $V$, such that $\Fix \ g=\{x\in V\mid L_{1}(x)=\dots=L_r(x)=0\}.$
 From Proposition \ref{propGeneratorsBg}, it follows that the ideal sheaf $\sH_g(h)$ is generated by $L_1(gh(x)-h(x')),\dots, L_r(gh(x)-h(x')),$
and $L_{r+1}(h(u)),\dots,L_p(h(u)).$
Equivalently, the space $B_g(h)$ is cut out by the equations
\[h(x')=gh(x),\quad L_{r+1}(h(u))=0,\dots,L_n(h(u))=0.\]
	\end{ex}

	
						\section{Obstructions to $\cA$-finiteness}\label{secObstrAFin}
 The following space will be crucial to describe stability and $\cA$-finiteness of reflection maps:
	\begin{definition}
For any reflection group $G$, we write $S_G$ for the singular locus of $B^2(\omega)$.
	\end{definition}

	\begin{prop}\label{propDescriptionSG}
The space $S_G$ is the union of all intersections $B_g\cap B_{g'}$, with $g,g'\in G\setminus\{1\}, \, g'\neq g$. It consists of the points
\begin{enumerate}
\item $(y,gy,[y-gy])$, with $y\in \sA\setminus \Fix  g$,
\item $(y,y,v)$, $y\in \Fix g\cap \Fix g'$, $v\in {\bf P}((\Fix g)^\bot\cap (\Fix g')^\bot)$, with $g,g'\in G\setminus\{1\}, \, g'\neq g$.
\end{enumerate}

\begin{proof}The first statement follows simply because $B^2(\omega)$ is the union of the smooth spaces $B_g$ (Proposition \ref{propB_g}, item \ref{propB_gItem1}). To show the second statement, take $y\in V$. If $y\notin \sA$ then only $g$ takes $y$ to $gy$, because the pointwise stabilizer of $G$ at $y$ is trivial (Corollary \ref{corStabilizer}). In the case $y\in \Fix g$ the second item is just the expression of a point $(y,y,v)\in B_g\cap B_{g'}$. If $y\in \sA\setminus \Fix g$, then the pointwise stabilizer of $G$ at $y$ is not trivial, so there is some $g'\neq g$, such that $g'y=gy$. Therefore, we have $(y,gy,[y-gy])\in B_g\cap B_{g'}$.
\end{proof}
	\end{prop}
	
	\begin{cor}\label{corPointsInSG}The space $S_G$ contains the following points:
\begin{enumerate}
\item \label{corPointsInSGItem1}$(y,y,v)$, with $y\in C_1, v\in{\bf P}( C_2{}^\bot)$, for two different facets $C_1,C_2\in \sC$, satisfying $C_1\subseteq\langle C_2\rangle$.
\item \label{corPointsInSGItem2}$(y,y,v)$, with $y\in H,\,v\in {\bf P}(H^\bot)$, where $H$ is the reflecting hyperplane of any reflection $r\in G$ of order $\geq 3$. 
\end{enumerate}
\begin{proof}
For item \ref{corPointsInSGItem1}, take $g_1,g_2\in G$ with $\Fix g_i=\langle C_i\rangle$.  For item \ref{corPointsInSGItem2}, take $r$ and $r^2$. The result follows because $r^2$ is also a reflection with reflecting hyperplane $H$.
\end{proof}

	\end{cor}

%

	\begin{lem}\label{lemDoublePointsStableRefMap}
Let $f\colon X^n\to \C^p$ be a stable reflection map. Then
 the spaces $B_g(h)$ are disjoint smooth spaces of dimension $2n-p$, and  $\widetilde h^{-1}(S_G)=\emptyset$.

\begin{proof}
Since $B^2(f)$ is smooth by Proposition \ref{propOfB2}, and $B^2(f)=\bigcup B_g(h)$, it suffices to show that any two different branches $B_g(f)$ and $B_{g'}(f)$ cannot have an irreducible component in common. Assume there is one such component $Z$. Again by item \ref{propOfB2Item4} of Proposition \ref{propOfB2}, $Z$ contains a strict point of the form $b^{-1}(x,x')$, with $h(x')=gh(x)=g'h(x)\neq h(x)$. This cannot happen if $h(x)\notin \sA$, because the pointwise stabilizer of $G$ at $h(x)$ is trivial (Corollary \ref{corStabilizer}). On the other hand, $h(x)\in \sA$ implies that $f$ is not one-to-orbit over $\sA$, and thus $f$ does not have normal crossings, and hence stability fails.
The second item follows immediately from the first, because $B_g(h)=\widetilde h^{-1}(B_g)$, and $S_G$ is union of all intersections of the form $B_g\cap B_{g'}$ (Proposition \ref{propDescriptionSG}).
\end{proof}
	\end{lem}

	\begin{thm}\label{thmNoRMStabCor2}
There is no stable germ of reflection map of corank $\geq 2$, and all stable essential reflection maps of corank one are fold maps.
\begin{proof}
Let $f\colon (\C^n,0)\to (\C^p,0)$ be a corank 2 germ of reflection map, or a singular germ of $Z_m$-reflection map, with $m\geq 3$. By Lemma \ref{lemDoublePointsStableRefMap}, it suffices to show that $\widetilde h^{-1}(S_G)$ is not empty. Let $C_1\in \sC$ be the facet passing through the origin. Since $f$ is singular, then intersection of $T_0Y$ and $C_1$ is not trivial, and the case of corank $1$ follows from Corollary \ref{corPointsInSG}, item \ref{corPointsInSGItem2}.  The corank 2 hypothesis implies $\dim (T_0Y\cap C_1{}^\bot)\geq 2$. Since the codimension of $C_1$ is at least $2$, there is a facet $C_2$ with $\dim C_2=\dim C_1+1$ and $C_1\subseteq \langle C_2\rangle$. This implies $\dim(T_0Y\cap  C_2 {}^\bot)\geq 1$. For any  $v\in {\bf P}(T_0Y\cap C_2 {}^\bot)$, we have a point $(0,0,v)\in \im \widetilde h\cap S_G$, by Corollary \ref{corPointsInSG}, item \ref{corPointsInSGItem1}. 
\end{proof}
	\end{thm}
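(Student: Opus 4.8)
The plan is to route everything through Lemma \ref{lemStableEmptyPreimageSG}, which says a stable reflection map must satisfy $\widetilde h^{-1}(S_G)=\emptyset$. So to prove non-stability it is enough to exhibit a single point of $S_G$ in $\im\widetilde h$, and the cheapest place to look is over the diagonal at the origin. From the local formula $\widetilde h(x,x,u)=(h(x),h(x),dh_xu)$ and $h(0)=0$, the points of $\im\widetilde h$ over $(0,0)$ are exactly $\{(0,0,[v])\mid v\in{\bf P}(TY)\}$, where $TY=\im dh_0$ (I use here that $h$ is an embedding, so $dh_0$ is injective). Thus the task reduces to: find $0\neq v\in TY$ with $(0,0,[v])\in S_G$, using the explicit list in Corollary \ref{corPointsInSG}. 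I would also record two reductions up front. By Lemma \ref{lemKerdf} and injectivity of $dh_0$ one has $\corank f=\dim(TY\cap C_1{}^\bot)$, where $C_1\in\sC$ is the facet through the origin; and \emph{essential of corank one} forces $\rank G=1$, so that $G$ is a rank-one reflection group, i.e.\ a cyclic $Z_m$ whose reflections all share a single reflecting hyperplane $H$. Consequently it suffices to prove non-stability for (i) corank $\geq 2$ germs and (ii) singular $Z_m$-reflection maps with $m\geq 3$, leaving only $Z_2$ in the essential corank-one case.

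For the corank $\geq 2$ statement the hypothesis gives $\dim(TY\cap C_1{}^\bot)\geq 2$, so the codimension of $C_1$ equals $\dim C_1{}^\bot\geq 2$. The crucial step is to produce a facet $C_2$ with $C_1\subseteq\langle C_2\rangle$ and $\dim C_2=\dim C_1+1$. I would do this with roots rather than abstract lattice theory: writing $C_1=C_B$, the roots $a_i$ ($i\in B$) span $C_1{}^\bot$, so I may choose among them a basis $a_{i_1},\dots,a_{i_k}$ of $C_1{}^\bot$ and let $C_2$ be the facet whose closure is the flat $\bigcap_{j=1}^{k-1}H_{i_j}$ (every flat of $\sA$ is the closure of a facet, by a short genericity check). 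Then $C_2{}^\bot=\langle a_{i_1},\dots,a_{i_{k-1}}\rangle$ is a hyperplane of $C_1{}^\bot$, whence $TY\cap C_2{}^\bot=(TY\cap C_1{}^\bot)\cap C_2{}^\bot$ has dimension $\geq 2-1=1$. Picking $0\neq v$ in this intersection, Corollary \ref{corPointsInSG}, item 1, gives $(0,0,[v])\in S_G$, so $\widetilde h^{-1}(S_G)\neq\emptyset$ and $f$ is not stable.

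For the corank-one essential case I would first rule out $m\geq 3$. Here $f$ is singular, so $TY\cap C_1{}^\bot\neq 0$; since $\rank G=1$ the facet through the origin is $C_1=H$ with $C_1{}^\bot=H^\bot$, and there is $0\neq v\in TY\cap H^\bot$. If the generating reflection has order $\geq 3$, Corollary \ref{corPointsInSG}, item 2, applies directly to give $(0,0,[v])\in S_G$, contradicting stability. Hence a stable essential reflection map of corank one must have $m=2$, i.e.\ $G=Z_2$. Finally, by the proposition that every essential reflection map is $\cA$-equivalent to a reflected graph, such an $f$ is $\cA$-equivalent to $(\omega,H)$ with $\omega$ the orbit map of $Z_2$ acting on a single coordinate; after the linear normalisation this is precisely $x\mapsto(x_1,\dots,x_{n-1},x_n{}^2,H(x))$, which is a fold map in the sense of Definition \ref{defFoldMap}.

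The hard part will be the combinatorial step of producing the intermediate facet $C_2$: one must be sure that dropping a hyperplane from the defining intersection of $C_1$ lowers the codimension by exactly one and that the resulting flat is again the closure of a facet. The root-basis argument above is the clean way around this, and it also makes transparent why a codimension-one drop is exactly what feeds the dimension count $\dim(TY\cap C_2{}^\bot)\geq 1$. Everything else is bookkeeping with the description of $\im\widetilde h$ over the origin and the two items of Corollary \ref{corPointsInSG}.
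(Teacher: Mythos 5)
Your proof is correct and follows essentially the same route as the paper: reduce via Lemma \ref{lemStableEmptyPreimageSG} to exhibiting a point of $S_G$ over the origin, use Corollary \ref{corPointsInSG} item 2 for the corank-one $Z_m$ ($m\geq 3$) case and item 1 with an intermediate facet $C_2$ for the corank $\geq 2$ case. The only additions are welcome bookkeeping: your root-basis construction of $C_2$ supplies the existence step the paper merely asserts, and you spell out the final identification of stable essential $Z_2$-reflection maps with fold maps via the reflected-graph normal form.
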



	\begin{thm}\label{thmNoRMAFinCor2Dims}
For $p<2n-1$, there is no $\cA$-finite germ of reflection map $f\colon (\C^n,0)\to (\C^p,0)$ of $\corank f\geq 2$, and all $\cA$-finite germs of essential reflection maps of corank one are fold maps.
\begin{proof}Let $\pi\colon B^2(f)\to X$ be the composition of $b\colon B^2(f)\to D^2(f)$ and the first projection $X\times X\to X$. If $\pi(h^{-1}(S_G))$ has dimension greater than 0, then the locus where $f$ is unstable is not issolated, and hence $f$ is not $\cA$-finite. Assume that there is a counter example $f$ to the statement.

If $f$ is a corank one germ of $Z_m$-reflection map, with $m\geq 3$, then $\widetilde h^{-1}(S_G)\neq \emptyset$ by Corollary \ref{corPointsInSG}, item \ref{corPointsInSGItem2}, and thus $\widetilde h^{-1}(S_G)$ has dimension $\geq 2n-(p+1)>0$.  The result follows because $\pi$ is finite for curvilinear maps.

If $f$ is a germ of reflection map with $\corank f\geq 2$ the argument is more subtle, because $\pi$ is not finite. If $C_0\in \sC$ is the facet containing the origin, then $\dim(T_0Y\cap C_0^\bot)\geq 2$. Since $1\leq 2n-p$, there is a facet $C\in \sC$, not passing through the origin, and such that $1\leq \dim(T_0Y\cap C^\bot)\leq 2n-p$.  Take $g\in G$, such that $\Fix g=\langle C\rangle$. On one hand, $1\leq \dim(T_0Y\cap C^\bot)$ implies that $B_g(h)$ is not empty, and hence it has dimension at least $2n-p$. On the other hand, since $\dim B_g(h)\geq2n-p$, the condition $\dim(T_0Y\cap C^\bot)\leq 2n-p$ implies that $B_g(h)$ cannot be contained in the fiber of the origin. Since $f$ is stable away from the origin, it follows from item \ref{propOfB2Item4} of Proposition \ref{propOfB2} that $B_g(h)$ contains a component $Z$, having dimension $2n-p$ and consisting generically on strict double points.
 
  Let $Z'=\pi(Z)\subseteq X$. Since $f$ is finite,  only finitely many strict double points can be mapped to the same point in $Z'$, and hence $\dim Z'=2n-p$. Since $C$ does not contain the origin, $\sA\setminus C$ does so, and thus $h^{-1}(\sA\setminus C)$ is a locally analytic subspace of codimension at most $1$ passing through the origin. Since $\dim Z'=2n-p$, the space $W=Z'\cap h^{-1}(\sA\setminus C)$ has dimension at least $1$. We claim that all points in $Z$ mapped to $W$ are contained in $\widetilde h^{-1}(S_G)$. The claim implies $\dim \pi(h^{-1}(S_G))> 0$, and hence contradicts the stability of $f$ away from the origin.

To show the claim, take any point $z\in Z$, mapping to a point $x\in X$ with $h(x) \in\sA\setminus C$. If $h(x)\notin \Fix g$, then $\widetilde h(z)$ is a point of the form $(y,gy,[gy-y])$, contained in $S_G$ by Proposition \ref{propDescriptionSG}. If $h(x)\in \Fix g$, then $\widetilde h(z)=(y,y,v)$ with $y=h(x)$ and $v\in {\bf P}(C^\bot)$. Since $y=h(x)\notin C$, there is a facet $C'\neq C$, such that $y\in C' \subseteq \langle C\rangle$. By Corollary \ref{corPointsInSG}, we have $\widetilde h(z)\in S_G$. 
\end{proof}
	\end{thm}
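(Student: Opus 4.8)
The plan is to reduce $\cA$-finiteness to a positive-dimensionality statement about the non-stable locus, in the same spirit as the reduction of stability to $\widetilde h^{-1}(S_G)=\emptyset$ in Lemma \ref{lemStableEmptyPreimageSG}. Let $\pi\colon B^2(f)\to X$ be the composition of $b\colon B^2(f)\to D^2(f)$ with the first projection $X\times X\to X$. Applying the local form of Lemma \ref{lemStableEmptyPreimageSG} to the multigerm of $f$ over $f(x)$, a point of $\widetilde h^{-1}(S_G)$ lying over $x\in X$ forces $f$ to be non-stable at the multigerm over $f(x)$; since $f$ is finite, the non-stable locus in the target contains $f(\pi(\widetilde h^{-1}(S_G)))$, which has the same dimension as $\pi(\widetilde h^{-1}(S_G))$. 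Hence it suffices to show that a counterexample $f$ ---either of corank $\geq 2$, or a singular essential corank one germ that is not a fold, i.e.\ a $Z_m$-reflection map with $m\geq 3$--- satisfies $\dim\pi(\widetilde h^{-1}(S_G))>0$. Throughout I use that $p<2n-1$ gives $2n-p\geq 2$.

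The curvilinear case $m\geq 3$ is immediate. Here $\widetilde h^{-1}(S_G)\neq\emptyset$ by Corollary \ref{corPointsInSG}(2), the kernel $\ker df_0=dh_0^{-1}(H^\bot)$ being one-dimensional and providing a point $(0,0,u)$ over the origin. The locus of Corollary \ref{corPointsInSG}(2) has codimension $p+1$ in $B^2(\C^p)$, so $\dim\widetilde h^{-1}(S_G)\geq 2n-(p+1)>0$. As $f$ is curvilinear, $b$ is an isomorphism by Proposition \ref{propOfB2}(3) and $\pi$ is finite, so $\pi(\widetilde h^{-1}(S_G))$ keeps this positive dimension and $f$ is not $\cA$-finite.

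The corank $\geq 2$ case is the crux, and the main obstacle is that $\pi$ is no longer finite: a whole positive-dimensional fibre of $\pi$ may sit over the origin, so one cannot merely push dimensions forward. I would first produce a facet $C$ off the origin with $1\leq\dim(TY\cap C^\bot)\leq 2n-p$, where $TY=\im dh_0$. Writing $C_0$ for the facet through the origin, the corank hypothesis reads $\dim(TY\cap C_0{}^\bot)=\corank f\geq 2$ (Lemma \ref{lemKerdf}); walking up the lattice of facets from $C_0$, the value $\dim(TY\cap C^\bot)$ drops by at most one at each step and reaches $\leq 1$ at codimension-one facets, so it realizes some value in the non-empty window $[1,2n-p]$ at a facet $C$ other than $C_0$, which is therefore off the origin (the origin lies only in $C_0$). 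Take $g\in G$ with $\Fix g=\langle C\rangle$ (Proposition \ref{propElementFixesFacet}). The lower bound makes $B_g(h)=\widetilde h^{-1}(B_g)$ non-empty, hence of dimension $\geq 2n-p$; the part of $B_g(h)$ over the origin is ${\bf P}(dh_0^{-1}(C^\bot))$, of dimension $\dim(TY\cap C^\bot)-1<2n-p$, so some component $Z$ of $B_g(h)$ escapes the origin fibre. Since $f$ is stable off the origin, Proposition \ref{propOfB2}(4) gives that $Z$ has dimension exactly $2n-p$ and consists generically of strict double points; as $f$ is finite on strict double points, $Z'=\pi(Z)$ again has dimension $2n-p$.

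To finish I would cut $Z'$ down with $h^{-1}(\sA\setminus C)$. Since the origin lies in $\sA\setminus C$ and, working with germs, $0\in Z'$, the set $W=Z'\cap h^{-1}(\sA\setminus C)$ is a non-empty germ through the origin of dimension $\geq(2n-p)-1\geq 1$. The final step is the claim that every $z\in Z$ with $\pi(z)\in W$ satisfies $\widetilde h(z)\in S_G$: putting $y=h(\pi(z))\in\sA\setminus C$, if $y\notin\Fix g$ then $\widetilde h(z)=(y,gy,[y-gy])$ lies in $S_G$ by Proposition \ref{propDescriptionSG}(1); if $y\in\Fix g=\langle C\rangle$, then $y\in\langle C\rangle\setminus C$ lies in some facet $C'\neq C$ with $C'\subseteq\langle C\rangle$, so $\widetilde h(z)=(y,y,v)$ with $v\in C^\bot$ lies in $S_G$ by Corollary \ref{corPointsInSG}(1). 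Thus $W\subseteq\pi(\widetilde h^{-1}(S_G))$ is positive-dimensional, contradicting $\cA$-finiteness. I expect the separation of the component $Z$ from the origin fibre, and the subsequent control of both the dimension and the $S_G$-membership of $W$ under the non-finite $\pi$, to be the genuinely delicate points of the argument.
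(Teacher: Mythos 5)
Your proposal is correct and follows essentially the same route as the paper: the same reduction of $\cA$-finiteness to $\dim\pi(\widetilde h^{-1}(S_G))>0$, the same finite-$\pi$ argument in the curvilinear case, and in the corank $\geq 2$ case the same choice of an off-origin facet $C$ with $1\leq\dim(TY\cap C^\bot)\leq 2n-p$, the same component $Z$ escaping the origin fibre, and the same intersection with $h^{-1}(\sA\setminus C)$ followed by the identical two-case verification that the resulting points land in $S_G$. The extra detail you supply on locating the facet $C$ and on the origin fibre of $B_g(h)$ is a welcome filling-in of steps the paper leaves implicit, but it does not change the argument.
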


							\section{$\cA$-finite reflection maps}\label{secAFiniteRefMaps}
	
We show $\cA$-finiteness criteria for dimensions $p\geq 2n-1$ and use them to produce families of $\cA$-finite reflection maps. Our criteria are consequences of the following general criteria, which can be obtained easily by combining Marar-Mond criterion of stability in corank one, Mather-Gaffney criterion of $\cA$-finiteness and the relation between $B^2(f)$ and $D^2(f)$ discussed in Proposition \ref{propOfB2}. 

	\begin{prop}\label{propCharAFinPgeq2N}
For $p\geq 2n$, a germ $f\colon (\C^n,0)\to (\C^p,0)$ is $\cA$-finite if and only if $B^2(f)$ is contained in the fiber of the origin.
	\end{prop}

	\begin{prop}\label{propAfiniteNTo2N-1}
A germ $f\colon (\C^n,0)\to (\C^{2n-1},0)$ is $\cA$-finite if and only if the following conditions hold:
\begin{enumerate}
 \item Away from the fiber of the origin, $B^2(f)$ is a reduced curve, 
\item The germ $f$ has no strict triple points.
 \end{enumerate}

 	\end{prop}

	\begin{cor}\label{corCharAFinRefMapPgeq2N}
For $p\geq 2n$, a germ of reflection map $f\colon (\C^n,0)\to(\C^{2n},0)$ is $\cA$ finite if and only if all $B_g(h)$ are contained in the fiber of the origin.
	\end{cor}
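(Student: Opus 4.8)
The plan is to derive this corollary directly from Proposition \ref{propCharAFinPgeq2N} by translating the single geometric condition ``$B^2(f)$ is contained in the fiber of the origin'' into the disjunctive condition on the branches $B_g(h)$. The essential input is the decomposition $B^2(f)=\bigcup_{g\in G\setminus\{1\}}B_g(h)$ established at the end of Section \ref{secB2RefMaps}, together with the fact that this is a finite union indexed by the nontrivial group elements.

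First I would recall what ``contained in the fiber of the origin'' means for $B^2(f)$. Via the composite map $\pi\colon B^2(f)\to D^2(f)\to X\times X$ and the source, a point of $B^2(f)$ lies over the origin precisely when its image in $X\times X$ is $(0,0)$; equivalently, using Proposition \ref{propOfB2} item \ref{propOfB2Item1}, every strict double point $b^{-1}(x,x')$ must have $x=x'=0$ and every exceptional point $(x,x,u)\in E$ must have $x=0$. The key observation is that this is a \emph{pointwise} condition on $B^2(f)$: a subset of a union lies in the fiber of the origin if and only if each piece does. Since $B^2(f)=\bigcup_g B_g(h)$ is a finite union, $B^2(f)$ is contained in the fiber of the origin if and only if each $B_g(h)$ is contained in it.

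With that equivalence in hand, the corollary is immediate: by Proposition \ref{propCharAFinPgeq2N}, $f$ is $\cA$-finite if and only if $B^2(f)$ lies in the fiber over the origin, and by the union decomposition this holds exactly when every $B_g(h)=\widetilde h^{-1}(B_g)$ lies in that fiber. I would state this as a one-line deduction, citing the decomposition and Proposition \ref{propCharAFinPgeq2N}.

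I do not anticipate a genuine obstacle here, as the statement is a formal consequence of a set-theoretic decomposition applied to a criterion proved just above; the only point requiring a moment's care is confirming that ``contained in the fiber of the origin'' is preserved under finite unions in the trivial direction and reflected by them, which is clear since the fiber of the origin is a fixed subspace and containment in a fixed subspace distributes over unions. One should also make sure the dimension hypothesis is stated consistently: the corollary is phrased for a map-germ $(\C^n,0)\to(\C^{2n},0)$, so I would align the hypothesis with $p\geq 2n$ as in the proposition (noting $p=2n$ is the relevant case for reflection maps landing in $\C^{2n}$), and then the proof is complete.
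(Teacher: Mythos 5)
Your argument is correct and is exactly the intended one: the paper leaves this corollary without proof precisely because it follows immediately from Proposition \ref{propCharAFinPgeq2N} together with the finite decomposition $B^2(f)=\bigcup_{g\in G\setminus\{1\}}B_g(h)$, and containment in the fiber of the origin distributes over that union. Nothing further is needed.
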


	\begin{cor}\label{corAfiniteRefMapNTo2N-1}
A germ of reflection map $f\colon (\C^n,0)\to(\C^{2n-1},0)$ is $\cA$-finite if and only if, away from the fiber of the origin, all $B_g(h)$ are empty or disjoint reduced curves without intersection.
	\end{cor}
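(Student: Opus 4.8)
The plan is to deduce this corollary from Proposition \ref{propAfiniteNTo2N-1} by translating its two conditions through the decomposition $B^2(f)=\bigcup_{g\in G\setminus\{1\}}B_g(h)$. The basic dictionary is that, since $\widetilde h^{-1}(B_g\cap B_{g'})=B_g(h)\cap B_{g'}(h)$ and $S_G=\bigcup_{g\neq g'}B_g\cap B_{g'}$, the locus where two distinct branches meet is exactly $\widetilde h^{-1}(S_G)$; by Proposition \ref{propDescriptionSG} this consists of corank $\geq 2$ diagonal points together with strict double points whose source lies on $\sA$. So ``the branches are disjoint'' is the same as ``$\widetilde h^{-1}(S_G)=\emptyset$ away from the fiber of the origin''.

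For the forward implication I would argue as follows. If $f$ is $\cA$-finite it is stable away from the fiber of the origin, so by Proposition \ref{propOfB2}, item \ref{propOfB2Item4}, the space $B^2(f)=\bigcup_g B_g(h)$ is smooth of dimension $2n-p=1$ there. Smoothness forces each nonempty $B_g(h)$ to be a smooth (in particular reduced) curve, and forces distinct branches to be disjoint, since two one-dimensional components meeting at a point would make $B^2(f)$ singular there. This gives the stated branch conditions.

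For the converse I would verify conditions (1) and (2) of Proposition \ref{propAfiniteNTo2N-1}. Condition (1) is immediate, as a disjoint union of reduced curves is a reduced curve. The disjointness hypothesis gives $\widetilde h^{-1}(S_G)=\emptyset$ away from the origin, which in particular rules out corank $\geq 2$ points there and thus, exactly as in the proof of Theorem \ref{thmNoRMAFinCor2Dims}, confines the corank $\geq 2$ locus to the origin, so that Marar--Mond's corank-one criterion becomes applicable. It remains to establish condition (2), and here the key observation is that any strict triple point $x_1,x_2,x_3$ (away from the origin) having one of its sources $h(x_i)$ on $\sA$ automatically produces a branch intersection: the nontrivial stabilizer $G_{h(x_i)}$ (Theorem \ref{thmStabilizerReflection}, Corollary \ref{corStabilizer}) supplies a reflection $r$ and hence two distinct elements $g,gr$ carrying $h(x_i)$ to the same neighbouring sheet, so the corresponding double point lies in $B_g(h)\cap B_{gr}(h)$, contradicting disjointness. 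Disjointness therefore leaves at most triple points all of whose sources avoid $\sA$, which I would exclude from accumulating at the origin using the expected-dimension count $\dim D^3(f)=3n-2p=2-n$ together with reducedness and the structure of $\pi\colon B^2(f)\to X$.

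I expect the main obstacle to be precisely this last step: showing that the no--strict--triple--points condition of Proposition \ref{propAfiniteNTo2N-1} is genuinely subsumed by the branch conditions. Unlike corank $\geq 2$ points and double points over $\sA$, an ordinary triple point with all three sheets off $\sA$ leaves $B^2(f)$ smooth and the branches $B_g(h)$ pairwise disjoint in $B^2(X)$ --- it is detected only through the non-injectivity of $\pi$, i.e.\ through the meeting of the \emph{$\pi$-images} of two branches in $X$, not through $B_g(h)\cap B_{g'}(h)$ itself. The delicate part of the proof is therefore to argue that such triple points cannot accumulate at the fiber of the origin (the one locus the hypothesis does not constrain directly) without forcing, in the limit, either a collision of sheets onto $\sA$ --- which by the stabilizer mechanism above reappears as a branch intersection arbitrarily close to, but off, the origin --- or an excess-dimensional component of $B^2(f)$ incompatible with the reduced-curve hypothesis.
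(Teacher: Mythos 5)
The paper states this corollary without any proof, as an immediate consequence of Proposition \ref{propAfiniteNTo2N-1}, so there is no argument to compare yours against step by step. Your forward implication is fine, your translation of condition (1) is fine, and your stabilizer argument correctly shows that a strict triple point with a source on $\sA$ forces two branches to meet. But the obstacle you isolate at the end is a genuine gap, and it cannot be closed: a strict triple point $f(x_1)=f(x_2)=f(x_3)$ with $h(x_1)\notin\sA$ produces the three points $(x_1,x_2)\in B_{g_2}(h)$, $(x_1,x_3)\in B_{g_3}(h)$ and $(x_2,x_3)\in B_{g_3g_2^{-1}}(h)$, which are three \emph{distinct} points of $B^2(X)$; no branch intersection is created, and nothing in the hypothesis prevents this from happening along a curve through the origin. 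Condition (2) of Proposition \ref{propAfiniteNTo2N-1} is therefore not subsumed by the branch conditions.

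To see that your proposed limiting argument cannot work, take $G=Z_{2,2,2}$ and $h(s,t)=(\phi(s,t),s,t)$ with $\phi=st+(s^2-t^2)(s+2t+1)$, so $f(s,t)=(\phi(s,t)^2,s^2,t^2)$. Since $\phi(\pm u,\pm u)=\pm u^2$, one gets $f(u,u)=f(-u,u)=f(u,-u)=f(-u,-u)=(u^4,u^2,u^2)$: the line $\{s=t\}$ is a curve of quadruple points with all sources off $\sA$, so $f$ is not $\cA$-finite. Yet $B_{i_{100}}(h)=\emptyset$ and the other six branches are, away from the fiber of the origin, reduced curves which are pairwise disjoint: the branches supported on $\{((s,t),(-s,t),(1{:}0))\}$ are cut out by $t+s^2-t^2$ and $(s^2-t^2)(2t+1)$, those on $\{((s,t),(s,-t),(0{:}1))\}$ by $s+2s^2-2t^2$ and $(s^2-t^2)(s+1)$, and those on $\{((s,t),(-s,-t),(s{:}t))\}$ by $st+s^2-t^2$ and $(s^2-t^2)(s+2t)$; all are square-free near $0$, pairs on the same sheet share only the origin, and the sheets themselves meet only along $\{t=0\}$ or $\{s=0\}$, where every branch reduces to the origin. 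The six double-point pairs of each quadruple point are thus scattered over $B_{i_{110}}(h)$, $B_{i_{101}}(h)$ and $B_{i_{011}}(h)$ at six different points of $B^2(\C^2)$. So the statement needs condition (2) of Proposition \ref{propAfiniteNTo2N-1} retained as a separate hypothesis (off-$\sA$ triple points are visible only through the non-injectivity of $\pi$ on the union of the $\pi$-images of the branches, exactly as you say); your diagnosis of where the deduction breaks is correct, and is in fact a criticism of the corollary as stated rather than of your own argument.
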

	
\begin{caution*}In what follows, whenever we say that some numbers, say $m_1,\dots,m_s,r_1,\dots,r_t$, are pairwise coprime, we mean that every $m_i$ and $m_j$, with $i\neq j$; every $r_i$ and $r_j$, with $i\neq j$; and every $m_i$ and $r_j$ are coprime numbers.
\end{caution*}
		
	\begin{thm}
Let $H$ be an $n\times n$ matrix with entries in $\Q$, all whose submatrices have maximal rank. Let $m_1,\dots,m_{n},r_{1},\dots,r_{n}$ be pairwise coprime positive integers. The following germ $f\colon (\C^n,0)\to (\C^{2n},0)$ is $\cA$-finite:
  \[x\mapsto ({x_1}^{m_1},\dots,{x_n}^{m_n},{(Hx)_1}^{r_{1}},\dots,{(Hx)_{n}}^{r_{n}}).\]
\begin{proof}The claim follows directly from Proposition \ref{propExNTo2N} and Lemma \ref{lemP1CoprimeImpliesP2} below.
\end{proof}
	\end{thm}

	\begin{ex}\label{exNTo2N}
For pairwise coprime positive integers $m_i$ and $r_j$, the following germs are $\cA$-finite:
\begin{align*}
x&\mapsto(x^{m_1},x^{r_1}),\\
(x_1,x_2)&\mapsto(x_1{}^{m_1},x_2{}^{m_2},(x_1+x_2)^{r_1},(x_1-x_2)^{r_2}),\\
(x_1,x_2,x_3)&\mapsto(x_1{}^{m_1},x_2{}^{m_2},x_3{}^{m_3},(x_1+x_2+x_3)^{r_1},(x_1-x_2+2x_3)^{r_2},(x_1+2x_2-x_3)^{r_3}).
\end{align*}
	\end{ex}

	\begin{thm}
Let $H$ be an $(n-1)\times n$ matrix with entries in $\Q$, all whose submatrices have maximal rank. Let $m_1,\dots,m_{n},r_{1},\dots,r_{n-1}$ be pairwise coprime positive integers, with all $r_j$ being odd. The following germ $f\colon (\C^n,0)\to (\C^{2n-1},0)$ is $\cA$-finite:
\[x\mapsto ({x_1}^{m_1},\dots,{x_n}^{m_n},{(Hx)_1}^{r_{1}},\dots,{(Hx)_{n-1}}^{r_{n-1}}):\]
\begin{proof}It follows from Proposition \ref{propExNTo2Nminus1} and Lemma \ref{lemP1CoprimeImpliesP3AndP4} below.
\end{proof}
	\end{thm}

	\begin{ex}\label{exNTo2NMinus1}
For  pairwise coprime positive integers $m_i$ and $r_j$, with $r_j$ odd, the following germs are $\cA$-finite:
\begin{align*}
(x_1,x_2)&\mapsto(x_1{}^{m_1},x_2{}^{m_2},(x_1+x_2)^{r_1}),\\
(x_1,x_2,x_3)&\mapsto(x_1{}^{m_1},x_2{}^{m_2},x_3{}^{m_3},(x_1+x_2+x_3)^{r_1},(x_1-x_2+2x_3)^{r_2}).
\end{align*}
	\end{ex}

	\begin{rem}\label{propExC^2C3}
It is not clear to me to what extent the condition that the exponents $r_i$ are odd is necessary. For example, it can be proven easily that Lemma \ref{exNTo2NMinus1} does not require the mentioned condition for $n=2$. In particular, the map 
\[(x,y)\mapsto(x^{m_1},y^{m_2},(x+y)^{r_1})\]
is $\cA$-finite, for any pairwise coprime $m_1,m_2$ and $r_1$. 
	\end{rem}

	\begin{definition}
We say that a matrix $H$, with entries in $\C$, \emph{has the property {\bf C1}} if all its submatrices have maximal rank. 
	\end{definition}

	\begin{definition}
Let $H$ be an $n\times n$ matrix with complex entries and let $m_1,\dots,m_{n},r_1,\dots,r_n$ be positive integers.  We say that $H$ \emph{has the property {\bf C2} for $m_i$ and $r_j$}  if, for any $\xi_1,\dots,\xi_n,\eta_1,\dots,\eta_{n}\in \C$, satisfying 
\[\xi_i{}^{m_i}=1,\quad \eta_i{}^{r_{i}}=1\]
 and 
 \[det(H_{ij}(\xi_j-\eta_i))=0,\]
  there are at least $n$ of the numbers $\xi_1,\dots,\xi_n,\eta_1,\dots,\eta_{n}$ which are equal to 1.
	\end{definition}

	\begin{prop}\label{propExNTo2N}
Let $m_1,\dots,m_{n},r_1,\dots,r_n$ be positive integers and let $H$ be a matrix satisfying {\bf C1} and {\bf C2} for $m_i$. The following germ $f\colon (\C^n,0)\to (\C^{2n},0)$ is $\cA$-finite: 
\[x\mapsto ({x_1}^{m_1},\dots,{x_n}^{m_n},{(Hx)_1}^{r_{1}},\dots,{(Hx)_n}^{r_{n}}).\]
\begin{proof}
The map $f$ is the reflection map given by the graph $h$ of $H$ and $G=Z_{m_1,\dots,m_{n},r_1,\dots,r_n}$. By Corollary \ref{corCharAFinRefMapPgeq2N}, it suffices to show that, for every  $i_a\in G\setminus\{1\},$ with $a=(a_1,\dots,a_{2n})$, the double-point branch $B_{i_a}(h)$ is contained in the fiber of the origin.  For $j=1,\dots,n$, let 
\[\xi_j=e^{\frac{2\pi i }{m_j}a_j}\text{ and }\eta_j=e^{\frac{2\pi i}{m_{n+j}}a_{n+j}}.\]
By Example \ref{exCompuBg(h)}, the equations of $B_{i_a}(h)$ in the variables $x,x',u$ of $B^2(\C^n)$ are 
\begin{align*}
x'_i&=\xi_ix,\\
(Hx')_i&=\eta_i(Hx)_i,
\end{align*}
together with 
\begin{align*}
u_i=0\,\text{ if }\,\xi_i=1,\\
(Hu)_i=0\,\text{ if }\,\eta_i=1.
\end{align*}
Eliminating $x'$, the first two equations turn into the homogeneous linear system $Mx=0$, where $M_{ij}=H_{ij}(\xi_j-\eta_i)$. If $B_{i_a}(h)$ is not contained in the origin, then $\det(M)=0$ and, by {\bf C2}, at least $n$ of the numbers $\xi_1,\dots,\xi_n, \eta_1,\dots,\eta_n$ are equal to 1. Let $H'$ be the $2n\times n$ matrix obtained by adding the rows of $H$ to the identity matrix of size $n$. Clearly, {\bf C1} implies that every $n\times n$ submatrix of $H'$ has rank $n$. If we pick any $n$ of the elements in $\eta_i$ or $\xi_i$ which are equal to 1, we obtain a system $S u=0$ for the corresponding submatrix $S$ of $H'$. Since $S$ has maximal rank, this equation cannot be met by any $u\in {\bf P}^{n-1}$.
\end{proof}

	\end{prop}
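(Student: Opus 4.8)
The plan is to recognise $f$ as a $G$-reflection map and then reduce $\cA$-finiteness to a finite linear-algebra check on each double-point branch, a check that is governed precisely by the two properties {\bf C1} and {\bf C2}.

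First I would write $f=\omega\circ h$, where $h\colon\C^n\hookrightarrow\C^{2n}$ is the graph embedding $x\mapsto(x,Hx)$ and $\omega$ is the orbit map of the group $G=Z_{m_1,\dots,m_{2n}}$, given by $y\mapsto(y_1{}^{m_1},\dots,y_{2n}{}^{m_{2n}})$. Since $h$ is linear it is automatically an embedding through the origin, so $f$ is a genuine $G$-reflection map into $\C^{2n}$, placing us in the dimension range $p=2n$. Because $p=2n$, Corollary \ref{corCharAFinRefMapPgeq2N} reduces the whole problem to showing that, for every nontrivial element $i_a\in G$, the branch $B_{i_a}(h)$ is contained in the fiber of the origin. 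Here the linearity of $h$ is decisive: by Example \ref{exCompuBg(h)} the branch is cut out by explicit equations. Writing $\eta_j$ and $\xi_j$ for the roots of unity attached to the two blocks of coordinates of $i_a$, these equations are $x'_i=\eta_ix_i$ and $(Hx')_i=\xi_i(Hx)_i$, together with the vanishing of certain components $u_i$ or $(Hu)_i$ dictated by which of the $\eta_j,\xi_j$ equal $1$.

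I would then argue by contradiction. Suppose some $B_{i_a}(h)$ is not contained in the fiber of the origin, so it contains a point with $x\neq0$. Eliminating $x'$ between the first two families of equations produces a homogeneous linear system $Mx=0$ with $M_{ij}=H_{ij}(\eta_i-\xi_j)$, and the existence of a nonzero solution forces $\det M=0$. Property {\bf C2} now guarantees that at least $n$ of the numbers $\eta_1,\dots,\eta_n,\xi_1,\dots,\xi_n$ equal $1$. Each such trivial root imposes, through the $u$-equations, the vanishing of a distinct row of the $2n\times n$ matrix $H'$ obtained by stacking the identity matrix of size $n$ on top of $H$, applied to $u$; selecting $n$ of these rows yields $Su=0$ for an $n\times n$ submatrix $S$ of $H'$. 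By property {\bf C1} the submatrix $S$ has full rank $n$, whence $u=0$, contradicting $u\in{\bf P}^{n-1}$. Hence every $B_{i_a}(h)$ lies over the origin, and $f$ is $\cA$-finite.

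The heart of the argument, and the step I expect to require the most care, is the interlocking of the two properties: {\bf C2} is tailored exactly to the determinant arising from eliminating $x'$ in the strict-double-point equations and forces enough roots of unity to be trivial, while {\bf C1} then rules out any admissible projective direction $u$. The bookkeeping to be handled carefully is twofold: confirming that the determinant condition produced by the branch equations is precisely the one appearing in {\bf C2} (up to the obvious transposition/relabelling of indices), and checking that the trivial roots single out genuinely distinct rows of $H'$, so that {\bf C1} can be applied to an honest $n\times n$ submatrix.
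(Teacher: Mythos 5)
Your proposal is correct and follows essentially the same route as the paper's own proof: recognizing $f$ as the $Z_{m_1,\dots,m_{2n}}$-reflection map of the graph of $H$, reducing via Corollary \ref{corCharAFinRefMapPgeq2N} to the branches $B_{i_a}(h)$, eliminating $x'$ to get $\det M=0$, and then playing {\bf C2} against {\bf C1} through the stacked matrix $H'$ to rule out any direction $u\in{\bf P}^{n-1}$. The two bookkeeping points you flag at the end are exactly the ones the paper handles implicitly, and your treatment of them is sound.
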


	\begin{definition}\label{defP3}
Let $H$ be an $(n-1)\times n$ matrix and let $m_1,\dots,m_{n},r_1,\dots,r_{n-1}$ be positive integers. We say that $H$ \emph{has the property {\bf C3} for $m_i$ and $r_i$} if, for any $\xi_1,\dots,\xi_n,\eta_1,\dots,\eta_{n-1}\in \C$, satisfying 
\[\xi_i{}^{m_i}=1,\quad \eta_i{}^{r_{i}}=1\] and \[\rank(H_{ij}(\xi_j-\eta_i))<n-1,\]
 there are at least $n$ of the numbers $\xi_1,\dots,\xi_n,\eta_1,\dots,\eta_{n-1}$ which are equal to 1.
	\end{definition}

	\begin{definition}\label{defP4}
Let $H$ be an $(n-1)\times n$ matrix and let $m_1,\dots,m_{n},r_1,\dots,r_{n-1}$ be positive integers. We say that $H$ \emph{has the property {\bf C4} for $m_i$ and $r_i$} if, for any complex numbers 
\[\xi_1,\dots,\xi_n,\quad \eta_1,\dots,\eta_{n-1},\quad \xi'_1,\dots,\xi'_n,\quad \eta'_1,\dots,\eta'_{n-1},\]
 satisfying 
\[\xi_j{}^{m_j}=\xi'_j{}^{m_j}=1,\quad\eta_i{}^{r_{i}}=\eta'_i{}^{r_i}=1,\]
the following holds: If the $(3n-2)\times n$ matrix $M$, obtained by concatenating the $n\times n$ diagonal matrix with entries $\xi_i-\xi'_i$ and the matrices $(H_{ij}(\xi_j-\eta_i))$ and $(H_{ij}(\xi'_j-\eta'_i))$, satisfies 
\[\rank (M)<n,\]
 then there are at least $n$ indices
$1\leq i_1<\dots<i_s\leq n$ and $1\leq j_{s+1}<\dots<j_{n}\leq n-1$, such that $\xi_{i_k}=1$ or $\xi'_{i_k}=1$, and $\eta_{j_k}=1$ or $\eta'_{j_k}=1$.
 	\end{definition}

	\begin{prop}
\label{propExNTo2Nminus1}
Let $m_1,\dots,m_{n},r_1,\dots,r_{n-1}$ be positive integers and let $H$ be a matrix satisfying {\bf C1}, {\bf C3} and {\bf C4} for $m_i$ and $r_j$. The following germ $f\colon (\C^n,0)\to (\C^{2n-1},0)$ is $\cA$-finite:
\[x\mapsto ({x_1}^{m_1},\dots,{x_n}^{m_n},{(Hx)_1}^{r_{1}},\dots,{(Hx)_{n-1}}^{r_{n-1}})\]
\begin{proof}The map $f$ is the $G$-reflection map given by the graph $h\colon \C^n\hookrightarrow \C^{2n-1}$ of $H$ and $G=Z_{m_1,\dots,m_{n},r_1,\dots,r_{n-1}}$. By Corollary \ref{corAfiniteRefMapNTo2N-1}, it suffices to check the following conditions:
\begin{enumerate}
\item Let  $i_a\in G\setminus\{1\},$ with $a=(a_1,\dots,a_{2n-1})$. Away from the fiber of the origin, the double-point branch $B_{i_a}(h)$ is a empty or a reduced curve.
\item The intersection of any two double-point branches $B_{i_a}(h)\cap B_{i_{a'}}(h)$, with $a\neq a'$, is contained in the fibre of the origin.
\end{enumerate}
The equations of $B_{i_a}(f)$ are as in the proof of \ref{propExNTo2N} and the same ideas used there show that {\bf C3} implies (1). To show (2), let 
\[\xi_1=e^{\frac{2\pi i}{m_{1}}a_{1}}\,,\dots,\  \xi_n=e^{\frac{2\pi i}{m_{n}}a_{n}}\,,\quad \eta_1=e^{\frac{2\pi i }{r_{1}}a_{n+1}},\dots,\ \eta_{n-1}=e^{\frac{2\pi i }{r_{n-1}}a_{2n-1}}\]
and 
\[\xi'_1=e^{\frac{2\pi i}{m_{1}}a'_{1}}\,,\dots,\  \xi'_n=e^{\frac{2\pi i}{m_{n}}a'_{n}}\,,\quad \eta'_1=e^{\frac{2\pi i }{r_{1}}a'_{n+1}},\dots,\ \eta'_{n-1}=e^{\frac{2\pi i }{r_{n-1}}a'_{2n-1}}.\]
After eliminating $x'$, a point in space $B_{i_a}(h)\cap B_{i_a'}(h)$ corresponds to a point $(x,u)\in \C^n\times {\bf P}^{n-1}$, satisfying 
\[Mx=0,\]
 with $M$ as in the definition of {\bf C4}, and the conditions
\begin{align*}
u_i=0\,\text{ if }\,\xi_i=1\text{ or }\xi'_i=1,\\
(Hu)_i=0\,\text{ if }\,\eta_i=1\text{ or }\eta'_i=1.
\end{align*}
Therefore, condition {\bf C4} implies (2).
\end{proof}
	\end{prop}

To give examples having arbitrarily high exponents, we should to check condition {\bf C2}, or conditions {\bf C3} and {\bf C4}, for all these exponents. The following two technical lemmas spare us to do so under some hypothesis. Their proofs are given in the final Section \ref{subsecAuxResults}.

	\begin{lem}\label{lemP1CoprimeImpliesP2}
Let $m_1,\dots,m_{n},r_1,\dots,r_n$ be pairwise coprime positive integers  and let $H$ be an $n\times n$ matrix with entries in $\Q$. If $H$ satisfies {\bf C1}, then it also satisfies {\bf C2} for $m_i$ and $r_j$.
	\end{lem}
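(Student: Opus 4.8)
The plan is to unwind the two definitions and reduce \textbf{C2} to a purely linear-algebraic non-vanishing statement about the matrix $M=(H_{ij}(\eta_i-\xi_j))$. Writing $I=\{i:\eta_i=1\}$ and $J=\{j:\xi_j=1\}$, the conclusion of \textbf{C2} is exactly that $|I|+|J|\ge n$, so I would argue by contraposition: assuming $|I|+|J|<n$, I will show $\det M\neq 0$. The first thing to record is the number-theoretic consequence of pairwise coprimality that does all the arithmetic work. If $\eta_i=\xi_j$ then, since $\eta_i$ and $\xi_j$ are roots of unity of orders dividing the coprime integers $m_i$ and $m_{n+j}$, their common value has order dividing $\gcd=1$, hence $\eta_i=\xi_j=1$. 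As $H_{ij}\neq 0$ by \textbf{C1}, this means $M_{ij}=0$ if and only if $i\in I$ and $j\in J$, so the zero locus of $M$ is precisely the block $I\times J$. More generally, coprimality makes the subgroup of $\C^{\ast}$ generated by all the $\eta_i,\xi_j$ an internal direct product of the cyclic groups they generate, so a product $\prod_i\eta_i^{a_i}\prod_j\xi_j^{b_j}$ can equal $1$ only when each factor attached to a nontrivial root is already trivial; this multiplicative independence is what rules out accidental coincidences later.

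The engine of the argument is a rank/support lemma coming from \textbf{C1}: if $c\neq 0$ has exactly $k$ nonzero entries, then $Hc$ has at most $k-1$ zero entries, for otherwise $k$ vanishing coordinates of $Hc$ would exhibit a $k\times k$ submatrix of $H$ annihilating the nonzero vector $c|_{\mathrm{supp}(c)}$, contradicting that all submatrices of $H$ have maximal rank. I would then take a kernel vector $c$ of $M$ of \emph{minimal} support, so that the restricted kernel is one-dimensional, and feed in the second ingredient, the reality symmetry. Since each $\eta_i,\xi_j$ is unimodular and $H$ is real, $\overline{M_{ij}}=-\eta_i^{-1}\xi_j^{-1}M_{ij}$, i.e. $\overline M=-D_\eta^{-1}MD_\xi^{-1}$ with $D_\eta=\mathrm{diag}(\eta_i)$, $D_\xi=\mathrm{diag}(\xi_j)$. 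From $Mc=0$ this yields $M\,(D_\xi^{-1}\overline c)=0$, another kernel vector with the same support; minimality forces $D_\xi^{-1}\overline c=\mu c$ for a scalar $\mu$, necessarily unimodular, which pins down $\xi_j=\mu^{-1}e^{-2i\arg c_j}$ for $j\in\mathrm{supp}(c)$ and, transporting through the real matrix $H$ via $\eta_i(Hc)_i=(HD_\xi c)_i=\mu^{-1}\overline{(Hc)_i}$, pins down $\eta_i=\mu^{-1}e^{-2i\arg(Hc)_i}$ on the at least $n-k+1$ indices where $(Hc)_i\neq 0$.

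A clean sub-case already falls out of the support lemma alone. If the minimal kernel vector satisfies $(I-D_\xi)c=0$, then each $j$ has $\xi_j=1$ or $c_j=0$, so $\mathrm{supp}(c)\subseteq J$ and $|J|\ge k$; moreover $(\eta_i-1)(Hc)_i=0$ for all $i$, and since $Hc$ has at most $k-1$ zeros this gives $|I|\ge n-k+1$, whence $|I|+|J|\ge n+1$, more than enough. My plan is to reduce the general situation to this one, using the reality relations above to convert the freedom in $c$ into forced vanishings among the $\eta_i,\xi_j$.

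The last step is the part I expect to be the main obstacle: combining the support lemma, the reality relations, and the multiplicative independence from coprimality to force at least $n$ of the $\eta_i,\xi_j$ to equal $1$, contradicting $|I|+|J|<n$. The delicate point is excluding cancellations among the unimodular quantities $\mu^{-1}e^{i\theta}$, whose orders are pairwise coprime: this is precisely where realness of $H$ and \textbf{C1} must be played against the arithmetic of the $m_i$, since neither the zero-block structure (which, with $|I|+|J|<n$, is too small to force $\det M=0$ by Frobenius--K\"onig) nor coprimality alone suffices. Once the count $|I|+|J|\ge n$ is secured, \textbf{C2} follows at once, which is all that is required.
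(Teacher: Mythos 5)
Your setup is sound as far as it goes --- the observation that pairwise coprimality forces $\eta_i=\xi_j\Rightarrow\eta_i=\xi_j=1$, the support lemma extracted from \textbf{C1}, and the reality relation $\overline M=-D_\eta^{-1}MD_\xi^{-1}$ leading, via a minimal-support kernel vector, to $\xi_j=\mu^{-1}e^{-2i\arg c_j}$ and $\eta_i=\mu^{-1}e^{-2i\arg(Hc)_i}$ are all correct. But the proposal is not a proof: the only case you actually close is the special one where $(I-D_\xi)c=0$, and you yourself flag the general case as ``the part I expect to be the main obstacle.'' That obstacle is precisely the content of the lemma. Knowing that all the relevant $\xi_j$ and $\eta_i$ lie on the common family $\mu^{-1}e^{i\theta}$ does not, by itself, interact with the coprimality of the orders: the phases $\arg c_j$ and $\arg(Hc)_i$ are unconstrained a priori, and the ``multiplicative independence'' you invoke (that a product $\prod\eta_i^{a_i}\prod\xi_j^{b_j}$ equals $1$ only if each nontrivial factor is trivial) is strictly weaker than what is needed, because no such product relation is produced by your reality identities. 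So the reduction of the general situation to the clean subcase is asserted but not carried out, and I do not see how to carry it out within the framework you set up.

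For comparison, the paper's argument (Lemmas \ref{lemA}, \ref{lemB} and \ref{lemNumberOfReals}) gets its arithmetic leverage from a \emph{field-theoretic} independence statement rather than a multiplicative one: if $\xi_1,\dots,\xi_s$ are roots of unity of pairwise coprime orders and $\xi_1\notin\R$, then $\xi_1\notin\R(\xi_2,\dots,\xi_s)$ (Lemma \ref{lemA}). This is applied to the expansion of $\det M$ as a polynomial in $\eta_1$ with coefficients in $\R(\xi_1,\dots,\xi_n,\eta_2,\dots,\eta_n)$: if $\eta_1\notin\R$ the leading coefficient must vanish, and iterating over all non-real $\eta_i$ reduces the vanishing of $\det M$ to the vanishing of a homogeneous polynomial with nonzero real coefficients (nonzero by \textbf{C1}) evaluated at $(1-\xi_1,\dots,1-\xi_n)$; Lemma \ref{lemB} then forces enough of the $\xi_j$ to be real. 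This yields the intermediate statement that at least $n+1$ of the $2n$ numbers are real, and only then does coprimality enter in the elementary form you use at the end: real roots of unity are $\pm1$, and at most one $m_i$ is even, so at most one of them can be $-1$. If you want to salvage your approach, you would need to inject an analogue of Lemma \ref{lemA} --- linear disjointness over $\R$ of the cyclotomic extensions generated by the $\xi_j$ and $\eta_i$ --- into the relations coming from $Mc=0$; the reality symmetry alone does not substitute for it.
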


	\begin{lem}\label{lemP1CoprimeImpliesP3AndP4}
Let $m_1,\dots,m_{n},r_1,\dots,r_{n-1}$ be pairwise coprime positive integers, with all $r_j$ being odd,  and let $H$ be an $(n-1)\times n$ matrix with entries in $\Q$. If $H$ satisfies {\bf C1}, then it also satisfies {\bf C3} and {\bf C4} for $m_i$ and $r_j$.
	\end{lem}

\section{Final Remarks}\label{secFinalRemarks}
We look for reflection maps in some well known classifications of map-germs. As we review these classifications, we see that in dimensions where the $\cA$-finiteness of reflection maps is unobstructed, that is, whenever $p$ is at least $2n-1$, much of the families we come across are reflection maps, specially in the $\cA$-simple case. We also give some motivation for the search of $\cA$-finite maps of any corank with coordinate functions of unbound multiplicity.

\subsection*{Corank one, classifications} There are several classifications  $\cA$-finite maps that we can search for reflection maps. All of them consist only of corank one germs and they include maps $(\C^n,0)\to (\C^p,0)$, for $(n,p)$ equal to $(1,2),(1,3),(2,3),(3,4)$ and $(n,2n)$.  

For dimensions $(n,p)=(1,2)$ the $\cA$-simple plane curves were classified by Bruce and Gaffney \cite{Bruce:1982} . For $(n,p)=(1,3)$, the classification of $\cA$-simple space curves is due to Gibson and Hobbs \cite{Gibson:1993}. It is not  surprising that all these curves are reflection maps, as it is enough that one of the coordinate functions of a curve is a monomial $t^m$ for the curve to be a $Z_m$-reflection map. Perhaps more interesting is the fact that all these curves can be described using just $Z_m$-reflection maps with $m=2,3,4$. It is also worth noting that most of these map-germs have two or three monomials as coordinate functions. Thus, they can be described as $Z_{m_1,m_2}$ or $Z_{m_1,m_2,m_3}$-reflection maps defined by very simple embeddings $h\colon (\C,0)\hookrightarrow (\C^p,0)$. For example, the second list contains the germs of the form $t\mapsto (t^3,t^{3k+1},t^{3n+2})$, which are $Z_{3,3k+1,3n+2}$-reflection maps given by the embedding $t\mapsto (t,t,t)$.

  We have already mentioned Mond's classification of $\cA$-simple germs $(\C^2,0)\to (\C^3,0)$, consisting only of fold maps and $Z_3$-reflection maps \cite{Mond1985On-the-clasific} (see Example \ref{exSimpleC2C3}). Also in \cite{Mond1985On-the-clasific}, Mond includes the list of all $\cA$-finite non-simple germs up to $\cA_e$-codimension 6. This list contains further reflection maps, namely the $Z_4$-reflection map  $T_4\colon\,(x,y)\mapsto(x,y^4,xy+y^3),$ and the $Z_3$-reflection map $X_4\colon\,(x,y)\mapsto(x,y^3,x^2y+xy^2 +y^4).$
The remaining germs in the list are not reflection maps in any obvious way, but most of them can be expressed as unfoldings of reflection maps of the form $y\mapsto (y^a,y^b)$.

The classification of $\cA$-simple germs $(\C^n,0)\to (\C^{2n},0), n\geq 2$, was carried out in \cite{Klotz:2007} by C. Klotz, O. Pop and J. H. Rieger. It consists of two lists, one for $n=2$, whose families are labelled by roman numerals, and another for $n\geq 3$, labelled by arabic numerals. The list for $n=2$ consists only of reflection maps: it contains
fold-maps ($\text{I}_k,\text{II}_k,\text{III}_{k,l}, \text{IV}_k,\text{V}$ and $\text{VI}$),
$Z_3$-reflection maps ($\text{VII}_k,\text{VIII}_k,\text{IX}_k,\text{X},\text{XI},\text{XIII}$ and $\text{XIV}$) and one family ($\text{XII}_k$) of $Z_4$-reflection maps. A number of these germs can also be regarded as further $Z_m$ and (non-essential) $Z_{m_1,m_2}$-reflection maps: $I_k$ (groups $Z_{2k+1}$ and $Z_{2,2k+1}$), $\text{II}_k$ ($Z_3$ and $Z_{2,3}$), $\text{V}$ ($Z_5$ and $Z_{2,5}$), $\text{X}$ ($Z_4$ and $Z_{3,4}$) and $\text{XI}$ ($Z_5$ and $Z_{3,5}$). For $n\geq 3$ the situation is similar, with the exception that there is a family, labelled $22_k$ (with $k\geq 2$, for $n=3$, and just one germ for $n\geq 4$, given by $k=2$), which does not seem to be made of reflection-maps. It has the form $(x_1,\dots,x_{n-1},y)\mapsto(x_1,\dots,x_{n-1},x_1y+y^3,x_2y,\dots,x_{n-1}y,x_1y^{2}+y^{2k+1},x_2y^2+y^4)$. 

Away from the dimensions $(n,p),p\geq n$, there is Houston and Kirk's \cite{Houston:1999} list of map-germs $(\C^3,0)\to (\C^4,0)$, containing all $\cA$-simple germs and all the non-simple ones with $\cA_e$-codimension at most $4$.
 Many of these are fold maps, labelled $A_k,D_k,E_6,E_7,E_8,B_k,C_k$ and $F_k$.
 By Theorem \ref{thmNoRMStabCor2}, the list cannot contain $Z_m$-reflection maps for $m\geq 3$ (observe that, a priori, it could contain non-essential corank 1 reflection maps). Being germs of corank 1, all germs in the list are unfoldings of maps $(\C^2,0)\to(\C^3,0)$ and, remarkably, all the $\cA$-simple germs which are not fold maps are unfoldings of $Z_3$-reflection maps. Some can be seen also as unfoldings of $Z_m$-reflection maps, for $m=4,5$ and $7$. The list of non-simple includes no reflection map, but some of its members are unfoldings of $Z_3$ and $Z_4$-reflection maps.
 
\subsection*{Families with unbound multiplicity and Mond's conjecture.}
 In the case of corank greater than one, we are not much interested in classifications, but in problems such as Conjecture \ref{conjLe} or Mond's conjecture \cite{Mond:1991}:
 
\begin{conjecture} The image Milnor number of an $\cA$-finite germ $f\colon(\C^n,0)\to(\C^{n+1},0)$ is greater than or equal to its $\cA_e$-codimension, with equality in the quasi-homogeneous case.
\end{conjecture}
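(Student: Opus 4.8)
The plan is to treat this as the map-germ analogue of the classical inequality $\mu\geq\tau$ between the Milnor and Tjurina numbers of an isolated hypersurface singularity, whose quasi-homogeneous equality case is K.~Saito's theorem. First I would realise both invariants cohomologically. Writing $(X,0)\subset(\C^{n+1},0)$ for the image of $f$, a stable perturbation $f_s$ has as image a hypersurface $X_s$ homotopy equivalent to a wedge of $n$-spheres, so that the image Milnor number is $\mu_I(f)=\dim_\C\widetilde{H}^n(X_s;\C)$. On the algebraic side, $\cA_e\text{-codim}(f)=\dim_\C T^1_{\cA_e}(f)$ is the dimension of the space of first-order deformations of $f$ modulo those induced by coordinate changes in source and target, equivalently the dimension of the base $\Lambda$ of a miniversal unfolding of $f$ after discarding the trivial directions.

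The inequality $\mu_I\geq\cA_e\text{-codim}$ should come from comparing this base $\Lambda$ with the vanishing homology carried by the family of images $\mathcal X\to\Lambda$ whose generic fibre is $X_s$ and whose central fibre is $X$. The idea is that, as one specialises from the generic to the central fibre, the vanishing cycles of $X_s$ collapse along a set of $\cA_e\text{-codim}(f)$ independent instability directions, each contributing at least one cycle. Making this precise is where the image-computing spectral sequence of Goryunov and Mond enters: it computes $\widetilde{H}^*(X_s)$ from the alternating cohomology of the multiple-point spaces $D^k(f_s)$, and one then needs a semicontinuity statement for these contributions as $s\to 0$, bounding $\dim_\C T^1_{\cA_e}(f)$ above by $\mu_I(f)$.

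For the equality in the quasi-homogeneous case I would exploit the $\C^*$-action directly, in the spirit of Saito's proof that $\mu=\tau$ precisely in the weighted-homogeneous case. Quasi-homogeneity provides an Euler vector field that splits the comparison map above and forces the spectral sequence to degenerate at $E_1$; each surviving term is then computed by a Koszul-type complex whose Euler characteristic yields both $\mu_I$ and $\dim_\C T^1_{\cA_e}$ simultaneously. The grading imposed by the $\C^*$-action then matches the two counts term by term, leaving no room for a strict inequality.

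The hard part will be the inequality for $n\geq 3$. For $n\leq 2$ the conjecture is known: the case $n=1$ is classical, the image Milnor number of a plane curve being its $\delta$-invariant, and $n=2$ has been established through the deformation theory of the image as a non-isolated hypersurface singularity. In higher dimensions the obstacle is that the multiple-point spaces $D^k(f)$ become badly singular once points of corank $\geq 2$ are present, so that neither the degeneration of the spectral sequence nor the required semicontinuity is automatic, and no purely algebraic model of $\mu_I$ is known which is manifestly $\geq\dim_\C T^1_{\cA_e}$. This is exactly where the reflection maps of the present work become relevant as a testing ground: the $\cA$-finite germs $(\C^2,0)\to(\C^3,0)$ produced here, such as $(x,y)\mapsto(x^{m_1},y^{m_2},(x+y)^{m_3})$, carry the transparent decomposition $B^2(f)=\bigcup_{g}B_g(h)$ of their double-point space, making both sides of the conjecture computable and offering a controlled family on which the sharper statements needed in general might first be verified.
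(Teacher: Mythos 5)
This statement is Conjecture (Mond's conjecture) and the paper does not prove it: it is quoted precisely as an open problem, known only for $n=1$, $n=2$ and (in a modified form) for fold maps, with all other cases open. So there is no proof in the paper for your proposal to match, and your proposal is not a proof either --- it is a research programme, and you say as much yourself. The decisive gaps are the two steps you flag but do not supply. First, the ``semicontinuity statement'' bounding $\dim_\C T^1_{\cA_e}(f)$ by the alternating cohomology of the multiple-point spaces $D^k(f_s)$ as $s\to 0$: this is exactly the missing ingredient in the literature, and it is not a technicality. For corank $\geq 2$ germs the spaces $D^k(f)$, $k\geq 3$, have no algebraically satisfactory description (as the paper notes in Section 7), so the image-computing spectral sequence gives you no handle on the central fibre, and there is no known comparison map from $T^1_{\cA_e}(f)$ into the vanishing cohomology whose injectivity you could check. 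Second, the claim that quasi-homogeneity ``forces the spectral sequence to degenerate at $E_1$'' and that a single Koszul-type complex computes both $\mu_I$ and the $\cA_e$-codimension is asserted by analogy with Saito's theorem, not argued; even in the cases where the conjecture is known ($n=1$ via the $\delta$-invariant, $n=2$ via de Jong--van Straten and Mond), the proofs do not proceed this way, and no such Euler-field splitting is available for images of map-germs, which are non-isolated hypersurface singularities.

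What your write-up does get right is the role this paper actually plays with respect to the conjecture: by the reduction of Fern\'andez de Bobadilla--Nu\~no-Ballesteros--Pe\~nafort-Sanchis cited in Section 14, it suffices to verify the conjecture on a single family of $\cA$-finite germs $(\C^n,0)\to(\C^{n+1},0)$ with at least $k+1$ coordinate functions of arbitrarily high order, and the reflection maps constructed here (e.g.\ $(x,y)\mapsto(x^{m_1},y^{m_2},(x+y)^{m_3})$ and its unfoldings) are proposed as candidates for such a family. That is a motivation for the paper's constructions, not a proof of the conjecture, and you should present it as such rather than as the concluding step of an argument.
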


The concepts in the statement are beyond the scope of this work, but the conjecture may be regarded as the result for maps corresponding to the inequality $\tau\leq \mu$ for the Tjurina and Milnor numbers of a germ of hypersurface with an isolated singularity. Mond's conjecture is known to be true for $n=1$ \cite{Mond:1995} and $n=2$ \cite{Jong:1991} and (in a slightly different form) for fold maps \cite{Houston:1998}, but all other cases remain open. However, in \cite{Fernandez-de-Bobadilla:2016} it is shown that the conjecture can be reduced to families of examples. More precisely, in order to prove the conjecture for a fixed dimension $n$ and up to corank $k$, it suffices to check the conjecture for a family of $\cA$-finite germs 
\[
f^N\colon (\C^n,0)\to(\C^{n+1},0),\  N\in \N,
\]
where at least $k+1$ coordinate functions of $f^N$ start at order at least $N$ (that is, the degree of the lowest non-zero term in the Taylor expansion is at least $N$).
However, even in the case of corank one, there is no such a family in the literature for dimensions were Mond's conjecture remains open. This makes the search of $\cA$-finite examples with coordinate functions of unbounded multiplicity into an interesting problem.

Recently, Sharland \cite{Altintas:2014} has introduced examples of $\cA$-finite germs $(\C^3,0)\to (\C^4,0)$ of corank 2. From Theorem \ref{thmNoRMStabCor2} it follows that these cannot be reflection maps. However, Sharland's family
\[B_{2l+1}\colon\ (x,y,z)\mapsto(x,y^2+xz,z^2+\alpha xy,y^{2l+1}+y^{2l}z+yz^{2l}-z^{2l-1})\]
(the values of $\alpha$ are determined by $l$) consists of unfoldings of double fold maps
\[(y,z)\mapsto(y^2,z^2,y^{2l+1}+y^{2l}z+yz^{2l}-z^{2l-1}).\]
In the same vein as in Sharland's work, one could produce $\cA$-finite examples $(\C^n,0)\to(\C^{n+1})$ of corank 2, with increasing multiplicities, by taking suitable unfoldings of the $\cA$-finite germ 
\[(x,y)\mapsto(x^{m_1},y^{m_2},(x+y)^{m_3}),\]
with $m_1,m_2$ and $m_3$ pairwise coprime (see Proposition \ref{propExC^2C3}).

 Another approach to obtain $\cA$-finite map-germs with high multiplicity and corank in dimensions $p<2n-1$ is to consider singular maps obtained from reflection groups in a less rigid way. For instance, a ``generalized reflection map'' can be defined as a composite 
 \[X\stackrel{h}{\hookrightarrow}V\stackrel{\omega}{\longrightarrow}\C^m\stackrel{L}{\longrightarrow}\C^p,\] where $h$ is an embedding, $\omega$ is the orbit map of a reflection group and $L$ is a submersion. This allows us to increase $m$ for fixed $n$ and $p$, avoiding certain obstructions related to the otherwise unavoidable bad intersections of the translates of $Y$. This line of work is being developed in colaboration with Bruna Or\'efice Okamoto and Jo\~ao Nivaldo Tomazella

\subsection*{Invariants of reflection maps:}Along this paper we have shown how involved questions like injectivity, normal crossing and stability, are captured easily by the translates of $Y$ and the complex $\sC$ for reflection maps. We have used this machinery to answer general questions about reflection maps, such as the obstructions to produce injective or $\cA$-finite reflection maps in certain dimensions and corank. It seems to me that this approach could be taken further, and that a deeper study of the invariants of a reflection maps could be made. Here are some ideas:

The invariants of double-fold maps $\C^2\to \C^3$ were studied in \cite{Penafort-Sanchis2014THE-GEOMETRY-OF}. Recall that the group $Z_{2,2}$ is generated by the reflections $i_{1,0}$ and $i_{0,1}$. The space $B^2(f)$ projects to $X=\C^2$ and its different branches $B_g(h)$ project to spaces that in \cite{Penafort-Sanchis2014THE-GEOMETRY-OF} were written as $D_g(f)\subseteq \C^2$. The contact of these branches with the facets in $\sC$ determines the singularities. For example, wherever $D_{i_{1,0}}(f)$ crosses transversally $\Fix i_{1,0}\setminus \Fix i_{0,1}$ we find a cross-cap. Also, wherever $D_{i_{1,0}}(f)$ crosses transversally  $\Fix i_{0,1}\setminus \Fix i_{1,0}$ a ``standard self tangency'' appears. Bearing this in mind, one can compute in a very easy way the number of cross-caps found after perturbing generically the function that defines the reflected graph, and the same for the number of standard self tangencies. It should be possible to generalize this to groups other than $Z_{2,2}$ and dimensions other than $\C^2\to \C^3$.

Studying the action of the group $\cA$ on reflection maps is another direction of work. For any reflection group, there is an adapted contact equivalence $\cK^\omega$, so that two reflected graphs $(\omega, H)$ and $(\omega, H')$ are $\cA$-equivalent if $H$ and $H'$ are $\cK^\omega$-equivalent  (see \cite{Penafort-Sanchis2014THE-GEOMETRY-OF} for details). It still not clear whether the $\cK^\omega$-equivalence is also a necessary condition for the $\cA$-equivalence, nor how to extend this contact equivalence to reflection maps which are not reflected graphs. We also would like to know if the $\cA_e$-codimension or the degree of $\cA$-determinacy can be computed or estimated in an easier way from the composite structure of reflection maps, or from corresponding notions for $\cK^\omega$-equivalence.

Another open question is the recognition of reflection maps: Given a map $f$, how can we decide whether or not it is a reflection map? From which groups can we obtain $f$? We know, for instance, that multi-germs of reflection maps with normal crossings have only smooth branches (Corollary \ref{corOnlySmoothBranches}). If a map shows a normal crossings multi-germ consisting of, say, a cross-cap intersecting a regular branch, then this map is not a reflection map. Of course this is a very weak result, and so far we have only been able to say that certain maps, such as the family $22_k$ in the previous section about classifications, do not seem to be reflection maps.

\section{Auxiliary results:}\label{subsecAuxResults}

Here we prove Lemma \ref{lemP1CoprimeImpliesP2} and Lemma \ref{lemP1CoprimeImpliesP3AndP4}. First we need to fix some notation. For any subset $J\subseteq\{1,\dots,n\}$, write $x^J=\prod_{i\in J}x_i.$
Fixed a positive degree $d\leq n$, we write $\cS=\{J\subseteq\{1,\dots,n\}\mid \vert J\vert=d\}$. Observe that a polynomial $p=\sum_{J\in \cS} a_Jx^J$, with all $a_J\neq 0$, is just a homogeneous polynomial having a nonzero term for each monomial $x_{i_1}\cdots\, x_{i_d}$, with $i_1<\dots<i_d$, and all other terms equal to zero.
As usual, we write $\Q(\xi_1,\dots,\xi_s)$ for the extension of $\Q$ obtained by adjoining the elements $\xi_1,\dots,\xi_s\in \C$.
	\begin{lem}\label{lemA}
Let $ \xi_1,\dots,\xi_s\in \C$, such that $\xi_i{}^{m_i}=1$, for some pairwise coprime positive integers $m_1,\dots, m_s$. If $\xi_1\notin \Q$, then $\xi_1\notin\Q(\xi_2,\dots,\xi_s)$. 
	\end{lem}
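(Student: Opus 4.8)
The plan is to reduce the statement to the classical arithmetic of cyclotomic fields. Since $\xi_i{}^{m_i}=1$, each $\xi_i$ is a root of unity whose order divides $m_i$, so $\xi_i\in\mathbb{Q}(\zeta_{m_i})$, where $\zeta_m$ denotes a primitive $m$th root of unity. Writing $N=m_2\cdots m_s$, the pairwise coprimality of the $m_i$ gives that the compositum $\mathbb{Q}(\zeta_{m_2})\cdots\mathbb{Q}(\zeta_{m_s})$ is exactly $\mathbb{Q}(\zeta_N)$; hence the field generated by $\xi_2,\dots,\xi_s$ is contained in $\mathbb{Q}(\zeta_N)$. On the other hand $\xi_1\in\mathbb{Q}(\zeta_{m_1})$, and $\gcd(m_1,N)=1$.

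I would then argue by contrapositive. Suppose $\xi_1$ belonged to the field generated by $\xi_2,\dots,\xi_s$. By the containments above it would then lie in $\mathbb{Q}(\zeta_{m_1})\cap\mathbb{Q}(\zeta_N)$. The key step is the standard identity $\mathbb{Q}(\zeta_a)\cap\mathbb{Q}(\zeta_b)=\mathbb{Q}(\zeta_{\gcd(a,b)})$, which for the coprime pair $(m_1,N)$ forces the intersection to be $\mathbb{Q}$. Thus $\xi_1$ would be a rational root of unity, and the only such elements are $1$ and $-1$, both real; this contradicts $\xi_1\notin\R$. The coprimality hypothesis enters precisely here, guaranteeing that $\mathbb{Q}(\zeta_{m_1})$ and $\mathbb{Q}(\zeta_N)$ are linearly disjoint over $\mathbb{Q}$.

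The main obstacle is conceptual rather than computational: recognizing that the separation of $\xi_1$ from the remaining roots of unity is exactly governed by the intersection formula for cyclotomic fields, and that it is the rational cyclotomic field $\mathbb{Q}(\zeta_N)$---not the real field generated directly by the $\xi_i$---through which coprimality must be read. Once this reduction is in place, the remaining ingredients (that the order of $\xi_i$ divides $m_i$, so imprimitive roots cause no difficulty, and that $\pm1$ are the only rational roots of unity) are immediate.
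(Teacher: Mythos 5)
Your argument correctly proves a \emph{different} statement, namely that $\xi_1\notin\mathbb{Q}(\xi_2,\dots,\xi_s)$. The lemma, however, concerns the field $\R(\xi_2,\dots,\xi_s)$ generated over the \emph{reals} (the paper's notation paragraph in this section is explicit about this), and your key step --- that the field generated by $\xi_2,\dots,\xi_s$ is contained in $\mathbb{Q}(\zeta_N)$, so that $\xi_1$ would have to lie in $\mathbb{Q}(\zeta_{m_1})\cap\mathbb{Q}(\zeta_N)=\mathbb{Q}$ --- fails for that field, since $\R\not\subseteq\mathbb{Q}(\zeta_N)$. The gap is not cosmetic: a subfield of $\C$ containing $\R$ and any single non-real number already contains $i$, hence equals $\C$. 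So if any $\xi_i$ with $i\geq 2$ is non-real, then $\R(\xi_2,\dots,\xi_s)=\C$ and the asserted conclusion $\xi_1\notin\R(\xi_2,\dots,\xi_s)$ is false (take $s=2$, $m_1=3$, $m_2=5$, $\xi_1=e^{2\pi i/3}$, $\xi_2=e^{2\pi i/5}$). No argument can close this gap for the statement as literally written: the coprimality of the $m_i$ controls the arithmetic over $\mathbb{Q}$, exactly through the intersection formula you invoke, but that information is destroyed once all of $\R$ is adjoined.

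The paper states this lemma without proof, so there is nothing to compare your argument against directly; but note that in its application (the proof of Lemma \ref{lemNumberOfReals}) it is invoked precisely in the problematic over-$\R$ form, to kill the coefficient of $\eta_1$ in an expression whose coefficients are \emph{real} (not rational) combinations of the remaining roots of unity, because $H$ has real entries. What is actually needed there is a linear-disjointness assertion --- that $1,\eta_1$ stay linearly independent over the $\R$-span of the field generated by the other roots of unity inside $\C$ --- and your reduction to $\mathbb{Q}(\zeta_{m_1})\cap\mathbb{Q}(\zeta_N)=\mathbb{Q}$ does not deliver that. Your cyclotomic ingredients themselves (each $\xi_i\in\mathbb{Q}(\zeta_{m_i})$, composita of coprime cyclotomic fields, $\mathbb{Q}(\zeta_a)\cap\mathbb{Q}(\zeta_b)=\mathbb{Q}(\zeta_{\gcd(a,b)})$, and $\pm1$ being the only rational roots of unity) are all correct and give a clean proof of the $\mathbb{Q}$-version; the missing piece is the passage from $\mathbb{Q}$ to $\R$, which is where the lemma's real content --- and, as stated, its failure --- lies.
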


	\begin{lem}\label{lemB}
Let $p\in\Q[x_1,\dots,x_n]$ be of the form $p=\sum_{J\in \cS}a_Jx^J,$
 with all $a_J\neq 0$. If $p(\xi)=0$, for some $\xi=(\xi_1,\dots,\xi_n)\in \C^n$, then there are at least $n-d+1$ coordinates $\xi_i$, such that $\xi_i\in \Q(\xi_1,\dots,\widehat{\xi_{i}},\dots,\xi_n)$. 
 \begin{proof}
 We proceed by induction on $d$. For $d=1$, the equality $p(\xi)=a_1\xi_1+\dots +a_n\xi_n=0,$ with $0\neq a_i\in \Q$, implies  
 $\xi_i\in \Q(\xi_1,\dots,\widehat{\xi_{i}},\dots,\xi_n)$ for all $i$, as desired.
 
Now assume that the statement is true for polynomials of degree $d-1$. If there is one $\xi_i\notin \Q(\xi_1,\dots,\widehat{\xi_{i}},\dots,\xi_n)$, then take the decomposition $p=x_iq+r$, where
  $q=\sum_{i\in J\in \cS}a_Jx^{J\setminus\{i\}}$ and $r=\sum_{i\notin J\in \cS}a_Jx^J$
do  not involve $x_i$. From the assumption that $\xi_i\notin \Q(\xi_1,\dots,\widehat{\xi_{i}},\dots,\xi_n)$ and the equality $0=\xi_iq(\xi_1,\dots,\widehat{\xi_{i}},\dots,\xi_n)+r(\xi_1,\dots,\widehat{\xi_{i}},\dots,\xi_n)$, it follows $q(\xi_1,\dots,\widehat{\xi_{i}},\dots,\xi_n)=0$.
Now observe that $q$ is a  homogeneous polynomial of degree $d-1$, whose nonzero terms correspond to all degree $d-1$ monomials in the variables $x_1,\dots,\widehat{x_{i}},\dots,x_n$ with no repeated factors. Therefore, the induction hypothesis applies to $q$ and thus at least $(n-1)-(d-1)+1=n-d+1$ of the numbers $\xi_j,$ with $j\neq i$, are contained in $\Q(\xi_1,\dots,\widehat{\xi_{i}},\dots,\xi_n)$. 
 \end{proof}
	\end{lem}

	\begin{notation*}
Given two matrices $A$ and $B$ of the same size, we write $A\bullet B$ for the entrywise product of $A$ and $B$.
	\end{notation*}

	\begin{lem}\label{lemNumberOfReals}
Let $H=(H_{ij})$ be a $n\times n$ matrix with entries in $\Q$, all whose submatrices have maximal rank. 
 Let $\xi_1,\dots,\xi_n,\eta_1,\dots,\eta_n\in \C$, such that $\xi_i{}^{m_i}=1$ and $\eta_i{}^{r_{i}}=1$, for some  positive integers $m_i$ and $r_j$, such that every two numbers in the collection $m_1,\dots,m_n,r_1,\dots,r_n$ are coprime. If the matrix  
 \[M=(H_{ij}(\xi_j-\eta_i))\]
satisfies $\det (M)=0$, then at least $n+1$ of the numbers $\xi_1,\dots,\xi_n,\eta_1,\dots,\eta_n$ are contained in $\Q$.
\begin{proof}
 By interchanging every $\xi_i$ and $\eta_i$ if necessary, we may assume that $r_{1},\dots,r_{n}$ are not divisible by $2$. Thus if $\eta_i$ is rational, it is not equal to $-1$ and hence must be equal to $1$. This allows us to divide up the $\eta_i$ into some which are not rational and the others which are equal to $1$. After reordering the $\eta_i$ and the $\xi_i$, we may assume that $\eta_1,\dots,\eta_k\notin \Q$ and $\eta_{k+1}=\dots=\eta_n=1$. Under this assumptions, the matrix $M$ is $H\bullet A$, with 
\[A=
\left(\begin{array}{cccc}
\xi_1-\eta_1		& 	 		& \xi_n-\eta_1 \\
\vdots 	&  			& \vdots \\

\xi_1-\eta_k		& 	 		& \xi_n-\eta_k \\
\xi_1-1		& 	 		& \xi_n-1 \\
\vdots 	&  			& \vdots \\
\xi_1-1 		& \cdots	  		& \xi_n-1
\end{array}\right).\] 

 Expanding by the first row of $M$, we have
$\det(M)=\det(H\bullet A^{(1)}) \eta_ 1+ B^{(1)}$, for some $B^{(1)}\in \Q(\xi_1,\dots,\xi_n,\eta_2,\dots,\eta_n)$ and
\[A^{(1)}=
\left(\begin{array}{cccc}
1		& \cdots	 		& 1 \\

\xi_1-\eta_2		& 	\cdots 		& \xi_n -\eta_2\\
\vdots 	&  			& \vdots \\

\xi_1-\eta_k		& \cdots	 		& \xi_n-\eta_k \\
\xi_1-1		& \cdots	 		& \xi_n-1 \\
\vdots 	&  			& \vdots \\
\xi_1-1 		& \cdots	  		& \xi_n-1
\end{array}\right).\] 
 Since $\eta_1$ is not rational, from Lemma \ref{lemA} follows that $\eta_1\notin \Q(\xi_1,\dots,\xi_n,\eta_2,\dots,\eta_n)$. Therefore, $\det(M)=0$ implies $\det(M^{(1)})=0,\text{ with }M^{(1)}=H\bullet A^{(1)}.$
 
The same argument applies to the matrix $M^{(1)}$ and $\eta_2\notin \Q$ (expanding the determinant by the second row) and so on so forth until we conclude the following: The condition that $\det(M)=0$ implies $\det(M^{(k)})=0$, for the matrix $M^{(k)}=H\bullet A^{(k)}$, with 
\[A^{(k)}=\left(\begin{array}{ccccc}
1 						& \cdots		& 1	 \\
\vdots 	  		& 	&\vdots  \\

1 						& \cdots		& 1	 \\
\xi_1-1 		 	&  \cdots		& \xi_n-1 \\
\vdots 	  		& 	&\vdots  \\
\xi_1-1 		 	&  \cdots		& \xi_n-1

\end{array}\right),\]where each $\xi_i$ is repeated $n-k$ times. In the case $k=n$ we are done, because $M^{(k)}=H$ and therefore $\det(M)=0$ is in contradiction with the hypothesis that $\det(H)=0$.
 
 If $k<n$, then we set $d=n-k\geq 1$. For any $J\in \cS$, let $a_J$ be the product of the minor of $H$ obtained by picking the columns in $J$ and the last $d$ rows, and the minor obtained by picking the columns of $H$ not in $J$ and the first $k$ rows, with the corresponding sign. We have that 
 \[\det(M^{(k)})=p(\xi_1-1,\dots,\xi_n-1)\]
  for the polynomial $p(x)=\sum_{J\in \cS}a_Jx^J=0$. The hypothesis that every submatrix of $H$ has maximal rank implies that non of the $a_J$ with $J\in \cS$ equals zero.
If $\det(M^{(k)})=0$, then from Lemma \ref{lemB} follows that there are at least $k+1$ of the numbers $\xi_i$, such that 
\[\xi_i-1\in \Q(\xi_1-1,\dots, \widehat{\xi_{i}-1},\dots,\xi_n-1)\]
 or, equivalently, $\xi_i\in \Q(\xi_1,\dots,\widehat{\xi_{i}},\dots,\xi_n).$ By Lemma \ref{lemA}, these  $\xi_i$ are in $\Q$. Since we also have $\eta_{k+1}=\dots=\eta_{n}=1$, the result follows.
\end{proof}
	\end{lem}

	\begin{proof}[Proof of Lemma \ref{lemP1CoprimeImpliesP2}:]
From Lemma \ref{lemNumberOfReals}, there are $n+1$ real numbers in the collection of $\xi_i$ and $\eta_j$. These  have to be either $1$ or $-1$ but, since only one of the numbers $m_i$ and $n_j$ can be divisible by $2$, at most one  of them is $-1$.
	\end{proof}

	\begin{lem}
Let $m_1,\dots,m_{n},r_{1},\dots,r_{n-1}$ pairwise coprime positive integers, with all $r_i$ odd,  and let $H$ be an $(n-1)\times n$ matrix with entries in $\Q$. If $H$ satisfies {\bf C1}, then it  satisfies {\bf C3} and {\bf C4} for $m_i$.
	\end{lem}

	\begin{proof}[Proof of Lemma \ref{lemP1CoprimeImpliesP3AndP4}:]
 To show that $H$ satisfies {\bf C3}, set $r_{n}=1$ and add a new row to $H$, with entries in $\Q$, so that the resulting matrix $H'$  satisfies {\bf C1}.  Assume that $\xi_1,\dots,\xi_n,\eta_1,\dots,\eta_{n-1}\in \C$, with $\xi_i{}^{m_i}=1\text{ and }\eta_i{}^{r_{i}}=1,$ satisfy  $\rank(H_{ij}(\xi_j-\eta_i))<n-1$ and set $\eta_n=1$. Since $\det (H'_{ij}(\xi_j-\eta_i))=0$, the claim follows immediately from Lemma \ref{lemNumberOfReals}.

Now we show that $H$ satisfies {\bf C4} for $m_i$. Given a solution $\xi_j,\xi'_j,\eta_i,\eta'_i$ as in the definition of {\bf C4}, by reordering the matrices if necessary, we may assume $\xi_1=\xi'_1,\dots, \xi_k=\xi'_k$ and $\xi_{k+1}\neq\xi'_{k+1},\dots,\xi_n\neq\xi'_n$, for some $0\leq k\leq n$. We divide the proof in the cases $k=0$,  $0<k<n$ and  $k=n$. If $k=0$, the upper $n\times n$ submatrix of $M$ has rank $n$ and {\bf C4} is satisfied trivially.

If  $0<k<n$, then $M$ has the form
\[
M=L\bullet \left(
 \begin{array}{cccccc}
&&\huge{\text{0}}&&
 \\ \cline{4-6}
 &&\multicolumn{1}{c|}{}&\xi_{k+1}-\xi'_{k+1} &&0\\  
  &
     &\multicolumn{1}{c|}{}&&\ddots \\
  &&\multicolumn{1}{c|}{}&0&&\xi_n-\xi'_n \\ \hline
  \xi_1-\eta_1&\dots&\multicolumn{1}{c|}{\xi_k-\eta_1}&& \\ 
  \vdots&&\multicolumn{1}{c|}{\vdots}&&
  \\
    \xi_1-\eta_{n-1}&\dots&\multicolumn{1}{c|}{\xi_k-\eta_{n-1}}&& \\ \cline{1-3}
    &
      &&\huge{\text{*}}\\
    
 \end{array}
\right)
,
\]
where $L$ is the matrix obtained by concatenating the identity matrix and two copies of $H$. The condition that $\rank(M)<n$ implies the vanishing of every minor of size $k$ of the matrix
\[ 
M'=L'\bullet \left(\begin{array}{ccc}
  \xi_1-\eta_1&\dots&\xi_k-\eta_1 \\
  \vdots&&\vdots\\
    \xi_1-\eta_{n-1}&\dots&\xi_k-\eta_{n-1}
\end{array}
\right),\]
for the corresponding submatrix $L'$ of $L$. By reordering rows, we may assume
$\eta_1\neq 1,\dots,\eta_l\neq 1$ and $\eta_{l+1}=\dots=\eta_{n-1}=1$. Let $L''$ be the upper $k\times k$ submatrix of $L'$. The matrix $L''$ satisfies the hypothesis of Lemma \ref{lemNumberOfReals} trivially (for $n=k$) and the matrix $M''=(L''_{ij}(\eta_i-\xi_j))$ satisfies $\det(M'')=0$. If $s$ is the number of elements $\xi_i,$ with $i\leq k,$ which are equal to 1, then it suffices to show $n\leq s+n-1-l$, that is, to show $s\geq l+1$. This follows by applying Lemma \ref{lemNumberOfReals} to the matrix $L''$.

Finally, assume $k=n$. The condition that $\rank(M)<n$ is equivalent to $\rank (M')<n$, for the matrix
\[ 
M'=L'\bullet \left(\begin{array}{ccc}
  \xi_1-\eta_1&\dots&\xi_n-\eta_1 \\
  \vdots&&\vdots\\
    \xi_1-\eta_{n-1}&\dots&\xi_n-\eta_{n-1}\\
  \xi_1-\eta'_1&\dots&\xi_n-\eta'_1 \\
  \vdots&&\vdots\\
    \xi_1-\eta'_{n-1}&\dots&\xi_n-\eta'_{n-1}
\end{array}
\right),\]
where $L'$ is obtained by concatenating two copies of $H$. Since $(\xi_1,\dots,\xi_n,\eta_1,\dots,\eta_{n-1})\neq (\xi'_1,\dots,\xi'_n,\eta'_1,\dots,\eta'_{n-1})$, and $\xi_j=\xi'_j$ for all $j$, it follows that $\eta_i\neq \eta'_i$ for some $i$. By placing the $i$th row in the first position, we may assume $\eta_1\neq\eta'_1$. The condition that $\rank(M)<n$ implies the vanishing of the determinant of the matrix
\[
N =
\left(\begin{array}{c}
H_1\\
\vdots\\
H_{n-1}\\
H_1
\end{array}\right)\bullet \left(\begin{array}{ccc}
  \xi_1-\eta_1&\dots&\xi_n-\eta_1 \\
  \vdots&&\vdots\\
    \xi_1-\eta_{n-1}&\dots&\xi_n-\eta_{n-1}\\
  \xi_1-\eta'_1&\dots&\xi_n-\eta'_1
  \end{array}
\right)
,\]
where $H_i$ stands for the $i$th row of $H$. Expanding the first and last rows, an easy computation shows that $\det(N)=0$ implies $\det(N')=0$, with $N'=H'\bullet A'$ and 
\[
H'= 
\left(\begin{array}{c}
H_1\\
\vdots\\
H_{n-1}\\
H_1
\end{array}\right),\quad         
A' =\left(\begin{array}{ccc}
1&\dots&1\\
  \xi_1-\eta_2&\dots&\xi_n-\eta_2 \\
  \vdots&&\vdots\\
    \xi_1-\eta_{n-1}&\dots&\xi_n-\eta_{n-1}\\
  \xi_1-1&\dots&\xi_n-1
  \end{array}
\right).\]
Since all $n_r$ are odd, we may reorder rows further, so that $\eta_2,\dots,\eta_k\notin \Q$ and $\eta_{k+1},\dots,\eta_{n-1}=1$. The proof is finished by following the same steps as in the proof of Lemma \ref{lemNumberOfReals}, with $A'$ and $H'$ playing the roles of $A^{(1)}$ and $H$, respectively.
	\end{proof}

\bibliography{MyBibliography} 
\bibliographystyle{plain} 

\end{document}